\definecolor{myeditcolor}{named}{blue}
\titleformat{\section}[block]{\bfseries\large}{\thesection. }{2pt}{}
\theoremstyle{definition}
\newtheorem{theorem}{Theorem}[section]
\newtheorem{definition}[theorem]{Definition}
\newtheorem{proposition}{Proposition}[section]
\newtheorem{corollary}[theorem]{Corollary}
\newtheorem{lemma}[theorem]{Lemma}
\newtheorem{remark}[theorem]{Remark}
\newcommand{\edit}[1]{\textcolor{myeditcolor}{#1}}
\begin{document}
\thispagestyle{empty}
\begin{center}
\noindent\textbf{{\Large Isometry groups of simply connected nonunimodular Lie groups of dimension four}}
\end{center}
\begin{center} \noindent Youssef Ayad\footnote{corresponding author: youssef.ayad@edu.umi.ac.ma}\\
	\small{\textit{Faculty of sciences, Moulay Ismail University of Meknes}}\\
	\small{\textit{B.P. 11201, Zitoune, Meknes}, Morocco}
\end{center}
\begin{center}
\textbf{Abstract}
\end{center}
\begin{center}
\hspace*{3mm}    For each left-invariant Riemannian metric on simply connected nonunimodular Lie groups of dimension four, we determine the full group of isometries.
\end{center}
\begin{center}
\textbf{Keywords} Simply connected Lie group, Riemannian metric, Isometry.
\end{center}
\begin{center}
\textbf{Mathematics Subject Classification} 53C30, 53C35, 53C20.	
\end{center}
\section{Introduction}
\hspace*{3mm}     This paper is devoted to solving the following problem: For each left-invariant Riemannian metric on a simply connected, nonunimodular Lie group of dimension four, determine the full group of isometries. The latter depends on the left-invariant Riemannian metrics.\\
Let $G$ be a connected and simply connected Lie group with Lie algebra $\mathfrak{g}$. An isometry of $G$ is a diffeomorphism $\theta$ of $G$ such that the pullback of the metric $g$ by $\theta$ is equal to $g$, i.e. $g(u, v) = g(\theta_{\ast}^{-1}u, \theta_{\ast}^{-1}v) \,\, \forall u, v\in \mathfrak{g}$, where $\theta_{\ast}$ denotes the differential of $\theta$ at the identity element $e$ of $G$ (this definition is used because to describe the group of isometries, it suffices to examine the isotropy subgroup at the identity element, as will be explained below). Isometries are crucial in both mathematics and physics because they preserve essential concepts, including geodesics, the Levi-Civita connection, Ricci curvature, and others. This preservation is a fundamental feature in both disciplines.\\
The set of all isometries of $G$ with respect to the left-invariant Riemannian metric $g$, denoted $\operatorname{Isom}(G, g)$, forms a Lie group under the compact-open topology and acts transitively on $G$ \cite{myers1939group}. The isotropy subgroup at the identity element $e$ of $G$ is denoted by $\operatorname{Isom}(G, g)_{e}$, and it consists of the isometries of $G$ that fix the identity element $e$. Thus, we have the following decomposition: $\operatorname{Isom}(G, g) = L(G) \cdot \operatorname{Isom}(G, g)_{e} \cong G \cdot \operatorname{Isom}(G, g)_{e}$, where $L(G)$ is the subgroup of $\operatorname{Isom}(G, g)$ consisting of left translations on $G$, which is identified with $G$. In fact, if $\theta$ is an element of $\operatorname{Isom}(G, g)$ such that $\theta(e) = p \in G$, then $\theta$ decomposes as: $\theta = L_{p} \circ (L_{p^{-1}} \circ \theta) \quad \text{where}\,\, L_{p^{-1}} \circ \theta \in \operatorname{Isom}(G, g)_{e}$.\\
The product $L(G) \cdot \operatorname{Isom}(G, g)_{e}$ is not, in general, a semidirect product; it depends on whether $L(G)$ is normal in $\operatorname{Isom}(G, g)$. In \cite{wolf1962locally,wilson1982isometry}, it was proved that if $G$ is nilpotent, then $\operatorname{Isom}(G, g) \cong G \rtimes \operatorname{Aut}(G)_{g}$ where $\operatorname{Aut}(G)_{g} = \left\lbrace A \in \operatorname{Aut}(G) / A^tgA = g\right\rbrace$ is the group of isometric automorphisms of $G$ ($A^t$ is the matrix transpose of $A$). The isometry groups of left-invariant metrics on $4$-dimensional nilpotent Lie groups are studied in \cite{ayad2024isometry,vsukilovic2017isometry}.\\
\hspace*{3mm}  The study of the isometry group of left-invariant Riemannian metrics on Lie groups has been a key topic in differential geometry and Lie group theory. This problem was first systematically addressed by Takahashi and Ochiai in their influential paper \cite{ochiai1976group}. In their work, they focus on the case where the Lie group $G$ is simple. They showed that in this case, any isomtery in the connected component of the identity $\operatorname{Isom}(G, g)_0$ decomposes into a left translation and a right translation. That is $\operatorname{Isom}(G, g)_0 \subset L(G)R(G)$.\\
Later, Shin in \cite{shin1997isometry} focused on three-dimensional unimodular Lie groups, contributing to the understanding of the isometry groups in this dimension. In his work, he specifically examined the behavior of isometries and their structure on three-dimensional unimodular Lie groups, providing valuable insights into the problem in low dimensions.\\
The research continued with significant contributions from Ha and Lee \cite{ha2012isometry}, who studied the isometry groups of left-invariant metrics on a broader class of Lie groups. In particular, they made important remarks on three-dimensional Lie groups and completely solved the problem in this case, offering a thorough and insightful characterization of isometry groups for three-dimensional unimodular Lie groups.\\
In contrast, Reggiani and Cosgaya \cite{ana2022isometry} turned their attention to nonunimodular three-dimensional Lie groups. They used the fact that the differential of an isometry fixing the identity element $e$ of $G$ preserves the eigenspaces of the Ricci operator of $(G,g)$. They provided a complete description of the isometry group, which plays an essential role in determining the index of symmetry of the Riemannian Lie group.\\
Additionally, \cite{ayad2024isometry,aitbenhaddou2024isometry} addressed the problem of determining the isometry group of left-invariant Riemannian metrics on four-dimensional unimodular Lie groups.\\
\hspace*{3mm}   The problem of describing the isometry groups of left-invariant Riemannian metrics on Lie groups is closely related to the classification of these metrics up to automorphism. This is because when two metrics are equivalent under an automorphism, their isometry groups are conjugate. The classification of left-invariant Riemannian metrics on Lie groups has been addressed by Ha and Lee \cite{ha2009left} for dimension three, by Van Thuong \cite{van2017metrics} for dimension four in the unimodular case, and by \cite{aitbenhaddou2025classification} for dimension four in the nonunimodular case.\\
\hspace*{3mm} In this paper, we will use the classification of left-invariant Riemannian metrics provided in \cite{aitbenhaddou2025classification} and give their associated isometry groups. This work provides a complete description of the isometry groups for nonunimodular four-dimensional Lie groups, which is essential for understanding the underlying symmetries of these Lie groups. By correlating the classification of metrics with the structure of their isometry groups, we offer valuable insights into the geometry and symmetry properties of these groups, contributing to the broader understanding of Riemannian geometry and its applications in mathematical physics.
\section{Preliminaries}
Since our Lie group $G$ is simply connected, then $\operatorname{Aut}(G) \cong \operatorname{Aut}(\mathfrak{g})$ \cite{warner} and we have an action of this group on the set $\mathscr{L}$ of all left invariant Riemannian metrics on $G$ given by
$\operatorname{Aut}(\mathfrak{g}) \times \mathscr{L} \longrightarrow \mathscr{L}, \quad (\theta, g) \longmapsto \theta^{\ast}g = g_{\theta}$, where $g_{\theta}(u, v) = g(\theta^{-1}u, \theta^{-1}v) \,\, \forall u, v\in \mathfrak{g}$. See \cite{ha2012isometry} for more detail. The isotropy subgroup of the metric $g$ under the above action is given by $\operatorname{Aut}(\mathfrak{g})_{g} = \left\lbrace \theta \in \operatorname{Aut}(\mathfrak{g}) / \theta^{\ast}g = g \right\rbrace.$
\begin{definition}
The group $\operatorname{Aut}(G)_g \cong \operatorname{Aut}(\mathfrak{g})_g$ is called the group of isometric automorphisms of $G$.
\end{definition}
\begin{definition}
A Lie group $G$ is said to be of type $(R)$ if, for every element $x$ in the Lie algebra $\mathfrak{g}$ of $G$, the endomorphism $\operatorname{ad} x : \mathfrak{g} \to \mathfrak{g}$, defined by $y \mapsto [x, y]$, has only real eigenvalues. Some authors refer to this condition by saying that $\mathfrak{g}$ has only real roots.
\end{definition}
The main key to solving our problem is given by the following three lemmas and theorem.
\begin{lemma} \label{normal} (Lemma 2.2 in \cite{ha2012isometry}). The group $L(G)$ of left translations on $G$ is a normal subgroup of $\operatorname{Isom}(G, g)$ if and only if $\operatorname{Isom}(G, g)_e = \operatorname{Aut}(G)_g$.
\end{lemma}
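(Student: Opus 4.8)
The plan is to prove the two implications separately, after recording one inclusion that holds unconditionally. The key preliminary observation is that $\operatorname{Aut}(G)_g \subseteq \operatorname{Isom}(G,g)_e$ always: any $A \in \operatorname{Aut}(G)_g$ satisfies $A^{*}g = g$, so it is an isometry, and it fixes $e$ because automorphisms fix the identity. Consequently the real content of the lemma is that $L(G)$ is normal in $\operatorname{Isom}(G,g)$ precisely when the reverse inclusion $\operatorname{Isom}(G,g)_e \subseteq \operatorname{Aut}(G)_g$ holds, i.e. when every isometry fixing $e$ is forced to be an automorphism.

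For the forward direction I would assume $L(G) \trianglelefteq \operatorname{Isom}(G,g)$ and take an arbitrary $\phi \in \operatorname{Isom}(G,g)_e$. Normality gives $\phi \circ L_x \circ \phi^{-1} \in L(G)$ for each $x \in G$, say $\phi \circ L_x \circ \phi^{-1} = L_y$. Since $\phi$ fixes $e$ (hence so does $\phi^{-1}$), evaluating both sides at $e$ identifies the translating element as $y = \phi(x)$, so that $\phi \circ L_x = L_{\phi(x)} \circ \phi$. Applying this identity to an arbitrary $z \in G$ yields $\phi(xz) = \phi(x)\phi(z)$; combined with $\phi(e)=e$, this shows $\phi$ is a homomorphism, and being a diffeomorphism it is an automorphism. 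As $\phi$ is also an isometry, $\phi \in \operatorname{Aut}(G)_g$, giving the required inclusion.

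For the converse I would assume $\operatorname{Isom}(G,g)_e = \operatorname{Aut}(G)_g$ and invoke the decomposition $\operatorname{Isom}(G,g) = L(G)\cdot \operatorname{Isom}(G,g)_e$ recorded in the introduction to write any $\theta \in \operatorname{Isom}(G,g)$ as $\theta = L_p \circ \psi$ with $p = \theta(e)$ and $\psi \in \operatorname{Aut}(G)_g$. For a left translation $L_x$, the automorphism property of $\psi$ gives $\psi \circ L_x \circ \psi^{-1} = L_{\psi(x)}$, and using the composition rule $L_a \circ L_b = L_{ab}$ to conjugate further by $L_p$ one obtains $\theta \circ L_x \circ \theta^{-1} = L_{p\,\psi(x)\,p^{-1}} \in L(G)$. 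Since $\theta$ and $L_x$ were arbitrary, $L(G)$ is normal.

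I expect the only delicate step to be the identification in the forward direction: confirming that conjugating a left translation by an $e$-fixing isometry again lands in $L(G)$ (this is exactly where normality enters) and then pinning down the translating element as $\phi(x)$ by evaluating at $e$. Everything else rests on two standard facts — that automorphisms intertwine left translations as $\psi \circ L_x = L_{\psi(x)} \circ \psi$, and that left translations compose by $L_a \circ L_b = L_{ab}$. Notably, no metric input beyond the isometry hypothesis is needed; once the ambient decomposition is available the argument is entirely group-theoretic.
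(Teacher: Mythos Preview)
Your argument is correct and is the standard proof of this lemma. Note, however, that the present paper does not supply its own proof: the lemma is quoted verbatim from \cite{ha2012isometry} (as Lemma~2.2 there) and used as a black box. So there is no ``paper's proof'' to compare against here; your write-up essentially reconstructs the argument that one finds in the cited reference, and it is sound in both directions.
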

\begin{lemma} \label{gordon} (Corollary (5.3) in \cite{gordon1988isometry}).
Let $G$ be a simply connected solvable Lie group whose Lie algebra has only real roots. Let $g$ and $g'$ be two left-invariant metrics on $G$. Then $g$ is isometric to $g'$ if and only if $g' = \psi^{\ast} g$ for some $\psi \in \operatorname{Aut}(G)$.
\end{lemma}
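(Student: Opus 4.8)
The plan is to prove the two implications separately, with the forward direction (automorphism $\Rightarrow$ isometric) being immediate and the converse carrying all of the content. For the easy direction, suppose $g' = \psi^{\ast} g$ with $\psi \in \operatorname{Aut}(G)$. Since $G$ is simply connected, $\psi$ is a diffeomorphism, and because it is a group automorphism it intertwines the left translations of $G$; combined with the relation $g'(u,v) = g(\psi^{-1}u,\psi^{-1}v)$ at the Lie-algebra level, this shows that $\psi : (G,g) \to (G,g')$ is a global isometry, so $g$ and $g'$ are isometric.

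For the converse, suppose $F : (G,g) \to (G,g')$ is an isometry. First I would normalize $F$ to fix the identity: replacing $F$ by $L_{F(e)^{-1}} \circ F$, where the left translation is an isometry of $(G,g')$, I may assume $F(e)=e$, so that $A := dF_e$ is a linear isometry from $(\mathfrak{g}, g)$ onto $(\mathfrak{g}, g')$. The goal is to produce an automorphism $\psi$ of $G$ realizing $g' = \psi^{\ast}g$. Note that $F$ itself need not be such an automorphism, since $L(G)$ is in general not normal in $\operatorname{Isom}(G,g)$ (cf. Lemma \ref{normal}), so $A$ need not preserve the bracket; the automorphism we seek will be a corrected version of $F$.

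The key idea is to regard both metrics as living on a single Riemannian manifold via $F$, on which $G$ then acts simply transitively by isometries in two ways: by the left translations $L^{g}(G)$ of the $g$-structure, and by the transported left translations $F^{-1} L^{g'}(G) F$ of the $g'$-structure, both realized inside $\operatorname{Isom}(G,g)$. Each of these subgroups is isomorphic to $G$, hence completely solvable by the type-$(R)$ hypothesis. I would then invoke the central rigidity furnished by the structure theory of Gordon and Wilson \cite{gordon1988isometry}: any two completely solvable, simply connected groups acting simply transitively by isometries on a Riemannian manifold are conjugate inside its full isometry group. This yields $c \in \operatorname{Isom}(G,g)$ with $c\, L^{g}(G)\, c^{-1} = F^{-1} L^{g'}(G) F$. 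Setting $h := F \circ c$, one checks $h\, L^{g}(G)\, h^{-1} = L^{g'}(G)$, so after composing with the left translation $L^{g'}_{h(e)^{-1}}$ to fix the identity I obtain an isometry $\psi$ fixing $e$ that still conjugates left translations to left translations; evaluating $\psi L^{g}_{x} \psi^{-1}$ at $e$ shows $\psi(xy)=\psi(x)\psi(y)$, so $\psi$ is an automorphism of $G$, and being an isometry fixing $e$ it satisfies $g' = \psi^{\ast} g$.

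The main obstacle is precisely the rigidity step, and this is exactly where the real-roots hypothesis is indispensable. For a general solvable group the left-translation action can be conjugated to a genuinely different simply-transitive action, and two nonisomorphic groups may share an isometric left-invariant metric; the type-$(R)$ condition rules this out through the uniqueness of the associated nilshadow, which forces the two simply-transitive actions to be of the same algebraic type and hence conjugate. Establishing this conjugacy — through the Gordon--Wilson analysis of the isometry group and its canonical normal ``modification'' subgroup — is the technical heart of the argument, and I would approach it by reducing to a comparison of nilshadows rather than by attempting any direct verification that $A$ preserves the bracket.
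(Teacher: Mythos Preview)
The paper does not supply its own proof of this lemma: it is quoted verbatim as Corollary~(5.3) of Gordon--Wilson \cite{gordon1988isometry} and used as a black box. Your sketch is a faithful outline of the Gordon--Wilson argument itself---reducing to two simply transitive isometric actions of a completely solvable group and invoking their conjugacy inside the full isometry group---so there is nothing further to compare.
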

\begin{lemma} \label{helgason} (Lemma 11.2. page 62 in \cite{helgason}).
Let $M$ be a Riemannian manifold, $\phi$ and $\psi$ two isometries of $M$ onto itself. Suppose there exists a point $p \in M$ for which $\phi(p) = \psi(p)$ and $d_p\phi = d_p\psi$. Then $\phi = \psi$.
\end{lemma}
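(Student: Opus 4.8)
The plan is to reduce the two-isometry statement to a single fixed-point statement and then exploit the rigidity of the exponential map under isometries, closing with a connectedness bootstrap. First I would form the composite $f = \psi^{-1}\circ\phi$, which is again an isometry of $M$ and satisfies $f(p)=p$ together with $d_pf = (d_p\psi)^{-1}\circ d_p\phi = \operatorname{id}_{T_pM}$; since $\phi=\psi$ is equivalent to $f=\operatorname{id}_M$, it suffices to show that an isometry fixing $p$ with identity differential there is the identity. (Here I would work under the standing assumption that $M$ is connected, as is implicit in this setting, $G$ being connected.)

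The key step is that an isometry carries geodesics to geodesics and therefore intertwines the exponential maps at $p$ and $f(p)$. Concretely, because $f$ preserves the metric it preserves the associated Levi-Civita connection, so for any geodesic $\gamma$ the curve $f\circ\gamma$ is again a geodesic; comparing initial position and velocity yields the relation $f\circ \exp_p = \exp_{f(p)}\circ\, d_pf$ on the domain of $\exp_p$. This identity is the fact I expect to be the main obstacle, since everything downstream becomes formal once it is available.

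With this in hand, the hypotheses $f(p)=p$ and $d_pf=\operatorname{id}$ give $f(\exp_p v)=\exp_p v$ for every $v$ in a normal neighborhood of $0$, so $f$ restricts to the identity on an open neighborhood of $p$; differentiating, $d_qf=\operatorname{id}$ there as well. To globalize, I would introduce the set $A=\{\,q\in M : f(q)=q \text{ and } d_qf=\operatorname{id}_{T_qM}\,\}$. It is nonempty (it contains the normal neighborhood just produced), closed (by continuity of $f$ and of $df$), and open (repeating the exponential-map argument at any $q\in A$ shows $f\equiv\operatorname{id}$ on a normal neighborhood of $q$, whence $d f=\operatorname{id}$ there too). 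Since $M$ is connected, $A=M$, so $f=\operatorname{id}_M$, and therefore $\phi=\psi$, as desired.
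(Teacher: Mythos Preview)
Your argument is correct and is exactly the classical proof: reduce to an isometry $f=\psi^{-1}\circ\phi$ fixing $p$ with $d_pf=\operatorname{id}$, use the naturality relation $f\circ\exp_p=\exp_{f(p)}\circ d_pf$ to get $f=\operatorname{id}$ on a normal neighborhood, then propagate by an open--closed argument on the connected manifold. The paper does not supply its own proof of this lemma; it simply quotes it from Helgason (Lemma~11.2, p.~62), and what you have written is precisely Helgason's argument.
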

\begin{theorem}
Let $G$ be a simply connected solvable Lie group of type $(R)$, then for each left invariant Riemannian metric $g$ on $G$, $\operatorname{Isom}(G, g) \cong G \rtimes \operatorname{Aut}(G)_g$.
\end{theorem}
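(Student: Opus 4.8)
The plan is to reduce everything to the single identity $\operatorname{Isom}(G, g)_e = \operatorname{Aut}(G)_g$, since by Lemma~\ref{normal} this equality is equivalent to the normality of $L(G)$ in $\operatorname{Isom}(G, g)$. Granting it, I would finish as follows: a left translation $L_a$ fixing $e$ forces $a = e$, so $L(G) \cap \operatorname{Isom}(G, g)_e = \{\mathrm{id}\}$; combining this with the decomposition $\operatorname{Isom}(G, g) = L(G)\cdot\operatorname{Isom}(G, g)_e$ recalled in the introduction and with the normality of $L(G)\cong G$, I obtain $\operatorname{Isom}(G, g) = L(G)\rtimes\operatorname{Isom}(G, g)_e \cong G\rtimes\operatorname{Aut}(G)_g$. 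So the whole theorem rests on proving $\operatorname{Isom}(G, g)_e = \operatorname{Aut}(G)_g$.

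The inclusion $\operatorname{Aut}(G)_g\subseteq\operatorname{Isom}(G, g)_e$ is the routine one: an isometric automorphism $\psi$ fixes $e$, and since $g$ is left invariant and $d_e\psi$ preserves $g_e$, the relation $\psi\circ L_a = L_{\psi(a)}\circ\psi$ propagates the orthogonality of $d_e\psi$ to every tangent space, so $\psi$ is a global isometry fixing $e$. I would record this first.

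For the reverse inclusion I would take $\phi\in\operatorname{Isom}(G, g)_e$ and set $A := d_e\phi$, an orthogonal map of $(\mathfrak{g}, g_e)$. The strategy is: (i) show that $A$ is a Lie algebra automorphism of $\mathfrak{g}$; (ii) let $\psi\in\operatorname{Aut}(G)$ be the unique automorphism with $d_e\psi = A$, which exists because $G$ is simply connected, and note that $\psi$ is then an isometric automorphism, i.e. $\psi\in\operatorname{Aut}(G)_g\subseteq\operatorname{Isom}(G, g)_e$; (iii) since $\phi$ and $\psi$ are both isometries with $\phi(e)=\psi(e)=e$ and $d_e\phi = A = d_e\psi$, Lemma~\ref{helgason} gives $\phi = \psi$, whence $\phi\in\operatorname{Aut}(G)_g$. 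Steps (ii) and (iii) are formal once (i) is in hand.

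The hard part will be step (i): proving that the orthogonal map $A=d_e\phi$ respects the bracket, since orthogonality alone is not enough and this is precisely where the hypothesis of type $(R)$ (equivalently, that $\mathfrak{g}$ has only real roots) must enter through Lemma~\ref{gordon}. The mechanism I would use is transport of structure: conjugation $a\mapsto \phi\circ L_a\circ\phi^{-1}$ carries $L(G)$ to a second simply transitive group of isometries of $(G, g)$, which endows $G$ with a new left-invariant-metric structure whose bracket $[\cdot,\cdot]_\ast$ on $\mathfrak{g} = T_eG$ satisfies $A[u,v] = [Au,Av]_\ast$, so that $A$ is a Lie algebra isomorphism $(\mathfrak{g},[\cdot,\cdot])\to(\mathfrak{g},[\cdot,\cdot]_\ast)$. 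Both structures carry the same metric $g$, hence are isometric; invoking the rigidity of Lemma~\ref{gordon}, valid because of the real-roots hypothesis, these two structures must agree up to an automorphism, which forces $\phi$ to normalize $L(G)$ and hence $[\cdot,\cdot]_\ast = [\cdot,\cdot]$. Then $A\in\operatorname{Aut}(\mathfrak{g})$, completing (i). I expect the careful justification that Lemma~\ref{gordon} genuinely identifies the two simply transitive structures (rather than merely relating the two metrics) to be the most delicate point, since it is here that complete solvability is indispensable: for non-type-$(R)$ groups extra isometries genuinely appear.
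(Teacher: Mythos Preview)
Your approach is essentially the paper's: reduce via Lemma~\ref{normal} to proving $\operatorname{Isom}(G,g)_e=\operatorname{Aut}(G)_g$, then for each $\phi$ in the isotropy produce an automorphism $\psi$ with $d_e\psi=d_e\phi$ and conclude $\phi=\psi$ by Lemma~\ref{helgason}. The only difference is in the middle step: the paper is terser, simply invoking Lemma~\ref{gordon} to obtain $\psi$ and asserting $\theta_\ast=\psi_\ast$ (with a pointer to \cite{ha2012isometry} for details), whereas you unpack the mechanism via the transport-of-structure argument (conjugating $L(G)$ by $\phi$ to a second simply transitive isometry group). Your self-flagged concern---that Lemma~\ref{gordon} as quoted compares two metrics on one group rather than two simply transitive structures under one metric---is well placed and is precisely the point the paper glosses over; the stronger Gordon--Wilson statement (normality of $L(G)$ for completely solvable $G$) is what both you and the paper are ultimately relying on, and it does supply what is needed.
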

\begin{proof}
According to the lemma \ref{normal}, it suffices to show that $\operatorname{Isom}(G, g)_e = \operatorname{Aut}(G)_g$. Let $\theta \in \operatorname{Isom}(G, g)_e$, then $\theta(e) = e$ and $\theta^{\ast}g = g$, i.e. $g(u, v) = g(\theta_{\ast}^{-1}u, \theta_{\ast}^{-1}v) \,\, \forall u, v\in \mathfrak{g}$. On the other hand, since $G$ is a simply connected solvable Lie group of type $(R)$, we have by lemma \ref{gordon}, $\psi^{\ast}g = g$ for some $\psi \in \operatorname{Aut}(G)$, i.e. $g(u, v) = g(\psi_{\ast}^{-1}u, \psi_{\ast}^{-1}v) \,\, \forall u, v\in \mathfrak{g}$.
Hence $\theta_{\ast} = d_e\theta = \psi_{\ast} = d_e\psi$ and $\theta(e) = \psi(e) = e$. By lemma \ref{helgason}, we have:\\ $\theta = \psi \in \operatorname{Aut}(G)_g$. Therefore $\operatorname{Isom}(G, g) \cong G \rtimes \operatorname{Aut}(G)_g$. (See page 192 in \cite{ha2012isometry}).
\end{proof}
Below, we provide a comprehensive list of all 4-dimensional nonunimodular Lie algebras \cite{kremlev2010signature}
\begin{center}
\textbf{Table 1} \vspace{2mm}
	
\begin{tabular}{|p{3.5cm}|p{10cm}|} \hline
Lie algebra  & Nonzero commutators  \\ \hline
$ \operatorname{A}_2 \oplus \operatorname{2A}_1 $ & $[e_1, e_2] = e_2$ \\ \hline
$ \operatorname{2A}_2 $ &  $[e_1, e_2] = e_2, [e_3, e_4] = e_4$ \\ \hline
$ \operatorname{A}_{3, 2} \oplus \operatorname{A}_1 $ & $[e_1, e_3] = e_1, [e_2, e_3] = e_1 + e_2$ \\ \hline
$ \operatorname{A}_{3, 3} \oplus \operatorname{A}_1 $ & $[e_1, e_3] = e_1, [e_2, e_3] = e_2$ \\ \hline
$\operatorname{A}_{3, 5}^{\alpha} \oplus \operatorname{A}_1, \newline
0 < \left| \alpha \right|  < 1 $  & $[e_1, e_3] = e_1, [e_2, e_3] = \alpha e_2$ \\ \hline
$ \operatorname{A}_{3, 7}^{\alpha} \oplus \operatorname{A}_1, \alpha > 0 $ &$[e_1, e_3] = \alpha e_1 - e_2, [e_2, e_3] = e_1 + \alpha e_2$ \\ \hline
$ \operatorname{A}_{4, 2}^{\alpha}, \alpha \neq 0,\newline
\alpha \neq -2 $ & $[e_1, e_4] = \alpha e_1, [e_2, e_4] = e_2, [e_3, e_4] = e_2 + e_3$ \\ \hline
$ \operatorname{A}_{4, 3} $ & $[e_1, e_4] = e_1, [e_3, e_4] = e_2$ \\ \hline
$ \operatorname{A}_{4, 4} $ & $[e_1, e_4] = e_1, [e_2, e_4] = e_1 + e_2, [e_3, e_4] = e_2 + e_3$  \\ \hline
$ \operatorname{A}_{4, 5}^{\alpha, \beta}, \alpha\beta \neq 0, \newline
-1 \leq \alpha \leq \beta \leq 1,\newline
\alpha + \beta \neq -1 $ & $\newline
[e_1, e_4] = e_1, [e_2, e_4] = \alpha e_2, [e_3, e_4] = \beta e_3$  \\ \hline
$ \operatorname{A}_{4, 6}^{\alpha, \beta}, \alpha \neq 0, \newline
\beta \geq 0, \alpha \neq -2\beta $ & $[e_1, e_4] = \alpha e_1, [e_2, e_4] = \beta e_2 - e_3, [e_3, e_4] = e_2 + \beta e_3$  \\ \hline	
$ \operatorname{A}_{4, 7} $ & $[e_2, e_3] = e_1, [e_1, e_4] = 2e_1, [e_2, e_4] = e_2, [e_3, e_4] = e_2 + e_3$  \\ \hline
$ \operatorname{A}_{4, 9}^{\beta}, -1 < \beta \leq 1 $ & $[e_2, e_3] = e_1, [e_1, e_4] = (1 + \beta)e_1, [e_2, e_4] = e_2, \newline
[e_3, e_4] = \beta e_3$  \\ \hline
$ \operatorname{A}_{4, 11}^{\alpha}, \alpha > 0 $ & $[e_2, e_3] = e_1, [e_1, e_4] = 2\alpha e_1, [e_2, e_4] = \alpha e_2 - e_3, \newline
[e_3, e_4] = e_2 + \alpha e_3$  \\ \hline
$ \operatorname{A}_{4, 12} $ & $[e_1, e_3] = e_1, [e_2, e_3] = e_2, [e_1, e_4] = -e_2, [e_2, e_4] = e_1$  \\ \hline
\end{tabular}
\end{center}
\begin{remark}
All the Lie algebras in Table 1 are solvable. We will focus on those of type $(R)$ and use the preceding theorem to describe their isometry groups. In fact, as we will show in the following proposition, only four Lie algebras from Table 1 are not of type $(R)$.
\end{remark}
\begin{proposition}
All the Lie algebras from Table 1 are solvable of type $(R)$ except the following four Lie algebras: $\operatorname{A}_{3, 7}^{\alpha} \oplus \operatorname{A}_1, \operatorname{A}_{4, 6}^{\alpha, \beta}, \operatorname{A}_{4, 11}^{\alpha}$ and $\operatorname{A}_{4, 12}$.
\end{proposition}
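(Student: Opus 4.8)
The plan is to separate the two assertions — solvability and the type-$(R)$ dichotomy — and to reduce the latter, which is the substantive part, to a single eigenvalue computation per algebra. Solvability I would dispose of quickly: in each row of Table 1 the derived algebra $[\mathfrak{g},\mathfrak{g}]$ is spanned by the vectors appearing on the right-hand sides of the commutators, which form a proper ideal, and a one-line check shows the derived series $\mathfrak{g}\supset[\mathfrak{g},\mathfrak{g}]\supset[[\mathfrak{g},\mathfrak{g}],[\mathfrak{g},\mathfrak{g}]]\supset\cdots$ terminates at $0$ (alternatively one simply cites the classification in \cite{kremlev2010signature}).

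For type $(R)$ the key reduction is a standard consequence of Lie's theorem. Since each $\mathfrak{g}$ is solvable, over $\mathbb{C}$ the family $\{\operatorname{ad}_x : x\in\mathfrak{g}\}$ is simultaneously triangularizable, and the diagonal entries define linear functionals $\lambda_1,\dots,\lambda_4:\mathfrak{g}\to\mathbb{C}$ (the roots) whose values at $x$ are exactly the eigenvalues of $\operatorname{ad}_x$. These roots vanish on $[\mathfrak{g},\mathfrak{g}]$, so by linearity $\lambda_i(x)$ is real for all $x$ as soon as it is real on each generator lying in a chosen complement of $[\mathfrak{g},\mathfrak{g}]$. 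Consequently $\mathfrak{g}$ is of type $(R)$ if and only if $\operatorname{ad}_{e_j}$ has only real eigenvalues for each such complementary generator $e_j$; and since the remaining generators act trivially, this amounts to inspecting a single operator (two for $2\operatorname{A}_2$, namely $\operatorname{ad}_{e_1}$ and $\operatorname{ad}_{e_3}$).

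For the four exceptional algebras I would simply exhibit a rotation block. With the convention $\operatorname{ad}_x(y)=[x,y]$, the matrix of $\operatorname{ad}_{e_3}$ of $\operatorname{A}_{3,7}^{\alpha}\oplus\operatorname{A}_1$ on $\operatorname{span}(e_1,e_2)$ is $\left(\begin{smallmatrix} -\alpha & -1 \\ 1 & -\alpha\end{smallmatrix}\right)$, with eigenvalues $-\alpha\pm i$; for $\operatorname{A}_{4,6}^{\alpha,\beta}$ and $\operatorname{A}_{4,11}^{\alpha}$ the operator $\operatorname{ad}_{e_4}$ on $\operatorname{span}(e_2,e_3)$ gives eigenvalues $-\beta\pm i$ and $-\alpha\pm i$; and for $\operatorname{A}_{4,12}$ the operator $\operatorname{ad}_{e_4}$ on $\operatorname{span}(e_1,e_2)$ is $\left(\begin{smallmatrix} 0 & -1 \\ 1 & 0\end{smallmatrix}\right)$, with eigenvalues $\pm i$. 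In each case an element outside $[\mathfrak{g},\mathfrak{g}]$ has $\operatorname{ad}$ with nonzero imaginary part in its spectrum, so these four fail type $(R)$.

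For the remaining ten algebras I would produce, for the distinguished generator, a real upper-triangular form of its adjoint action: every structure constant is of the shape $[\,\cdot\,,\,\cdot\,]=(\text{real scalar})\,e_i+(\text{strictly higher terms})$ with no antisymmetric $\pm e_j,\mp e_i$ pairing, so no rotation block arises and all eigenvalues are real (for instance $\operatorname{ad}_{e_4}$ has eigenvalues $-\alpha,-1,-1$ on $\operatorname{A}_{4,2}^{\alpha}$ and $-2,-1,-1$ on $\operatorname{A}_{4,7}$). Combined with the reduction above, this establishes that exactly these ten are of type $(R)$. The main obstacle is conceptual rather than computational: one must justify that checking a single adjoint operator suffices even though the definition quantifies over all $x\in\mathfrak{g}$, and this is precisely what the vanishing of the roots on $[\mathfrak{g},\mathfrak{g}]$ together with their linearity provides. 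Once that reduction is in place, the remaining verifications are routine $2\times2$ and $3\times3$ eigenvalue computations.
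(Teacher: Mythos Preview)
Your argument is correct and, for the exceptional four algebras, essentially identical to the paper's: both of you simply write down the adjoint matrix of the distinguished generator ($e_3$ or $e_4$) and read off the conjugate pair $\pm i$ in the spectrum.

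Where you differ is in the treatment of the remaining algebras. The paper merely asserts that one verifies $\operatorname{ad}e_i$ has real spectrum for each basis vector $e_i$ and leaves it there. You supply the missing logical step that the paper glosses over: since the definition of type $(R)$ quantifies over \emph{all} $x\in\mathfrak{g}$, not just basis vectors, one needs to know that real spectrum on a basis propagates to real spectrum everywhere. Your use of Lie's theorem---the roots $\lambda_i$ are linear and vanish on $[\mathfrak{g},\mathfrak{g}]$, hence are real on all of $\mathfrak{g}$ once real on a complement of the derived algebra---is exactly the right justification, and in fact lets you reduce further than the paper does (to a single generator in most cases, two for $2\operatorname{A}_2$). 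So your proof is not a different route so much as a strictly more complete version of the paper's, filling a genuine if minor gap that the paper sweeps under ``one can easily verify.''
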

\begin{proof}
For the Lie algebra $\operatorname{A}_{3, 7}^{\alpha} \oplus \operatorname{A}_1$, the endomorphism $\operatorname{ad} e_3$ is represented by the matrix
$$\operatorname{ad} e_3 = \begin{bmatrix}
-\alpha & -1 & 0 & 0\\
1 & -\alpha & 0 & 0\\
0 & 0 & 0 & 0\\
0 & 0 & 0 & 0
\end{bmatrix}.$$
We see that $\operatorname{ad} e_3$ has two complex eigenvalues which are $-\alpha + i$ and $-\alpha - i$.\\
For the Lie algebra $\operatorname{A}_{4, 6}^{\alpha, \beta}$, the endomorphism $\operatorname{ad} e_4$ is represented by the matrix
$$\operatorname{ad} e_4 = \begin{bmatrix}
-\alpha & 0 & 0 & 0\\
0 & -\beta & -1 & 0\\
0 & 1 & -\beta & 0\\
0 & 0 & 0 & 0
\end{bmatrix}.$$
We see that $\operatorname{ad} e_4$ has two complex eigenvalues which are $-\beta + i$ and $-\beta - i$.\\
For the Lie algebra $\operatorname{A}_{4, 11}^{\alpha}$, the endomorphism $\operatorname{ad} e_4$ is represented by the matrix
$$\operatorname{ad} e_4 = \begin{bmatrix}
-2\alpha & 0 & 0 & 0\\
0 & -\alpha & -1 & 0\\
0 & 1 & -\alpha & 0\\
0 & 0 & 0 & 0
\end{bmatrix}.$$
Thus, $\operatorname{ad} e_4$ has two complex eigenvalues which are $-\alpha + i$ and $-\alpha - i$.\\
For the Lie algebra $\operatorname{A}_{4, 12}$, the endomorphism $\operatorname{ad} e_4$ is represented by the matrix
$$\operatorname{ad} e_4 = \begin{bmatrix}
0 & -1 & 0 & 0\\
1 & 0 & 0 & 0\\
0 & 0 & 0 & 0\\
0 & 0 & 0 & 0
\end{bmatrix}.$$
Therefore, the element $\operatorname{ad} e_4$ has two complex eigenvalues which are $\pm i$.\\
For all other Lie algebras in Table 1, one can easily verify that $\operatorname{ad} e_i$, for all $i = 1, \dots, 4$, have only real eigenvalues.
\end{proof}
\section{The simply connected Lie groups of dimension four}
In this section, we introduce the simply connected Lie groups associated with the four-dimensional nonunimodular solvable Lie algebras of type $(R)$. This problem is addressed by Biggs and Remsing in \cite{biggs2016classification}. We adopt their notation and provide some additional explanations.
\begin{center}
	\newpage
	
\textbf{Table 2} \vspace{2mm}
	
\begin{tabular}{|p{3.5cm}|p{3.5cm}|p{3.5cm}|} \hline
Lie algebra  &  Notation in \cite{biggs2016classification} & Simply connected Lie group \\ \hline
$ \operatorname{A}_{2} \oplus 2\operatorname{A}_1 $ & $ \mathfrak{g}_{2.1} \oplus 2\mathfrak{g}_1 $ & $G_{2.1} \times \mathbb{R}^2$ \\ \hline
$ \operatorname{2A}_2 $ &  not defined & $G_2$ \\ \hline
$ \operatorname{A}_{3, 2} \oplus \operatorname{A}_1 $ & $ \mathfrak{g}_{3.2} \oplus \mathfrak{g}_1 $ & $G_{3.2} \times \mathbb{R}$ \\ \hline
$ \operatorname{A}_{3, 3} \oplus \operatorname{A}_1 $ & $ \mathfrak{g}_{3.3} \oplus \mathfrak{g}_1 $ & $G_{3.3} \times \mathbb{R}$ \\ \hline
$\operatorname{A}_{3, 5}^{\alpha} \oplus \operatorname{A}_1$  & not defined & $G_{c} \times \mathbb{R}$ \\ \hline
$ \operatorname{A}_{4, 2}^{\alpha}, \alpha \neq (0, 1)$ & $ \mathfrak{g}_{4.2}^{\alpha}$ & $G_{4.2}^{\alpha}$ \\ \hline
$ \operatorname{A}_{4, 2}^{1}$ & $ \mathfrak{g}_{4.2}^{1}$ & $G_{4.2}^{1}$ \\ \hline
$ \operatorname{A}_{4, 3} $ & $ \mathfrak{g}_{4.3} $ & $G_{4.3}$ \\ \hline
$ \operatorname{A}_{4, 4} $ & $ \mathfrak{g}_{4.4} $ &  $G_{4.4}$ \\ \hline
$ \operatorname{A}_{4, 5}^{\alpha, \beta}, \alpha\beta \neq 0$ & $ \mathfrak{g}_{4.5}^{\alpha, \beta}$ & $G_{4.5}^{\alpha, \beta}$  \\ \hline	
$ \operatorname{A}_{4, 7} $ & $ \mathfrak{g}_{4.7} $ & $G_{4.7}$  \\ \hline
$ \operatorname{A}_{4, 9}^{\beta} $ & $ \mathfrak{g}_{4.8}^{\alpha} $ & $G_{4.8}^{\alpha} $ \\ \hline
\end{tabular}
\end{center}
\begin{remark}
The simply connected Lie groups associated with these Lie algebras will be described in details in the following subsections.
\end{remark}
\subsection{The simply connected Lie group associated with $\operatorname{A}_{2} \oplus 2\operatorname{A}_1$}
The Lie algebra $\operatorname{A}_{2} \oplus 2\operatorname{A}_1$ has a basis $\mathcal{B} = \left\lbrace e_1, e_2, e_3, e_4\right\rbrace$ such that the only nonzero bracket is $[e_1, e_2] = e_2$. Hence we see that this Lie algebra is a direct sum of the two-dimensional affine Lie algebra $\mathfrak{aff}(\mathbb{R})$ with the abelian Lie algebra $\mathbb{R}^2$. The affine Lie group $\operatorname{Aff}(\mathbb{R})$ is isomorphic to the semidirect product $\mathbb{R} \rtimes \mathbb{R}^{\times}$ where $\mathbb{R}^{\times}$ acts on $\mathbb{R}$ by multiplication. The Lie groups $\mathbb{R} \rtimes \mathbb{R}^{\times}$ and $\mathbb{R} \rtimes \mathbb{R}_{+}^{\times}$ have the same Lie algebra wich is $\mathfrak{aff}(\mathbb{R})$, in addition $\mathbb{R}_{+}^{\times}$ is simply connected and isomorphic to $\mathbb{R}$ by the exponential: $\exp: \mathbb{R} \longrightarrow \mathbb{R}_{+}^{\times}, \; t \longmapsto e^t$. Thus, the simply connected Lie group associated with the Lie algebra $\mathfrak{aff}(\mathbb{R})$ is
$$G_{2.1} := \widetilde{\operatorname{Aff}(\mathbb{R})} = \left\lbrace \begin{bmatrix}
e^a & b\\
0 & 1
\end{bmatrix} \big/ a, b \in \mathbb{R}\right\rbrace.$$
Consequently, the simply connected Lie group with Lie algebra $\operatorname{A}_{2} \oplus 2\operatorname{A}_1$ is $G_{2.1} \times \mathbb{R}^2$.
\subsection{The simply connected Lie group associated with $\operatorname{2A_2}$}
The Lie algebra $\operatorname{2A}_2$ has a basis $\mathcal{B} = \left\lbrace e_1, e_2, e_3, e_4\right\rbrace$ such that the only nonzero brackets are 
$$[e_1, e_2] = e_2, \qquad [e_3, e_4] = e_4.$$
Hence we see that this Lie algebra is a direct sum of the two-dimensional affine Lie algebra $\mathfrak{aff}(\mathbb{R})$ with itself, i.e. $\operatorname{2A_2} = \mathfrak{aff}(\mathbb{R}) \oplus \mathfrak{aff}(\mathbb{R})$. Therefore, the simply connected Lie group associated with our Lie algebra $\operatorname{2A_2} = \mathfrak{aff}(\mathbb{R}) \oplus \mathfrak{aff}(\mathbb{R})$ is the following
$$G_2 := \widetilde{\operatorname{Aff}(\mathbb{R})} \times \widetilde{\operatorname{Aff}(\mathbb{R})} = \left\lbrace \begin{bmatrix}
e^a & b & 0 & 0\\
0 & 1 & 0 & 0\\
0 & 0 & e^c & d\\
0 & 0 & 0 & 1
\end{bmatrix} \bigg/ a, b, c, d \in \mathbb{R}\right\rbrace.$$
\subsection{The simply connected Lie group associated with $\operatorname{A}_{3, 2} \oplus \operatorname{A}_1$}
The Lie algebra $\operatorname{A}_{3, 2} \oplus \operatorname{A}_1$ is the Lie algebra $\mathfrak{g}_{3.2} \oplus \mathfrak{g}_1$ in \cite{biggs2016classification}. Its associated simply connected Lie group is $G_{3.2} \times \mathbb{R}$ where
$$G_{3.2} = \left\lbrace \begin{bmatrix}
1 & 0 & 0\\
y & e^z & 0\\
x & -ze^z & e^z
\end{bmatrix} \;:\; x, y, z \in \mathbb{R}\right\rbrace \quad (\text{See page 1005 in \cite{biggs2016classification}}).$$
\subsection{The simply connected Lie group associated with $\operatorname{A}_{3, 3} \oplus \operatorname{A}_1$}
The Lie algebra $\operatorname{A}_{3, 3} \oplus \operatorname{A}_1$ is the Lie algebra $\mathfrak{g}_{3.3} \oplus \mathfrak{g}_1$ in \cite{biggs2016classification}. Its associated simply connected Lie group is $G_{3.3} \times \mathbb{R}$ where
$$G_{3.3} = \left\lbrace \begin{bmatrix}
1 & 0 & 0\\
y & e^z & 0\\
x & 0 & e^z
\end{bmatrix} \;:\; x, y, z \in \mathbb{R}\right\rbrace \quad (\text{See page 1005 in \cite{biggs2016classification}}).$$
\subsection{The simply connected Lie group associated with $\operatorname{A}_{3, 5}^{\alpha} \oplus \operatorname{A}_1$}
The Lie algebra $\operatorname{A}_{3, 5}^{\alpha} \oplus \operatorname{A}_1$ has a basis $\mathcal{B} = \left\lbrace e_1, e_2, e_3, e_4\right\rbrace$ such that the nonzero brackets are 
$$[e_1, e_3] = e_1,\quad [e_2, e_3] = \alpha e_2,\quad \text{where}\; 0 < |\alpha| < 1.$$
Hence $\operatorname{A}_{3, 5}^{\alpha} = \langle e_1, e_2, e_3\rangle$ is a three-dimensional solvable Lie algebra that is nonunimodular. In the other hand, $\operatorname{A}_1 = \langle e_4\rangle \cong \mathbb{R}$ is a one-dimensional Lie algebra.
\begin{proposition}
We can realize the Lie algebra $\operatorname{A}_{3, 5}^{\alpha}$ as a semidirect sum $\operatorname{A}_{3, 5}^{\alpha} \cong \mathbb{R}^2 \rtimes_{\sigma_{\alpha}}\mathbb{R}$ where $\sigma_{\alpha}(t) = \begin{bmatrix}
-t & 0\\
0 & -t\alpha
\end{bmatrix}$.
\end{proposition}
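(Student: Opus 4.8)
The plan is to exhibit an explicit Lie algebra isomorphism between $\operatorname{A}_{3,5}^{\alpha}$ and the abstract semidirect sum $\mathbb{R}^2 \rtimes_{\sigma_{\alpha}} \mathbb{R}$ that matches the given structure constants. First I would fix the convention for the semidirect sum: writing a generic element of $\mathbb{R}^2 \rtimes_{\sigma_{\alpha}} \mathbb{R}$ as a pair $(v, t)$ with $v \in \mathbb{R}^2$ and $t \in \mathbb{R}$, and viewing $\sigma_{\alpha} : \mathbb{R} \to \operatorname{Der}(\mathbb{R}^2) \cong \mathfrak{gl}(2, \mathbb{R})$ as the prescribed linear map, the Lie bracket is
$$[(v_1, t_1), (v_2, t_2)] = \bigl(\sigma_{\alpha}(t_1)v_2 - \sigma_{\alpha}(t_2)v_1,\; 0\bigr),$$
both the $\mathbb{R}^2$-ideal and the $\mathbb{R}$-factor being abelian.

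Next I would locate the natural abelian ideal and complement inside $\operatorname{A}_{3,5}^{\alpha}$. Put $\mathfrak{a} = \langle e_1, e_2\rangle$ and $\mathfrak{h} = \langle e_3\rangle$, so that $\operatorname{A}_{3,5}^{\alpha} = \mathfrak{a} \oplus \mathfrak{h}$ as vector spaces. A direct inspection of the brackets shows $[e_1, e_2] = 0$, so $\mathfrak{a}$ is abelian, while $[e_1, e_3] = e_1 \in \mathfrak{a}$ and $[e_2, e_3] = \alpha e_2 \in \mathfrak{a}$, so $\mathfrak{a}$ is an ideal. Computing the restriction of the adjoint action of $e_3$ to $\mathfrak{a}$, and using $[e_3, e_1] = -e_1$ and $[e_3, e_2] = -\alpha e_2$, gives
$$\operatorname{ad}(e_3)\big|_{\mathfrak{a}} = \begin{bmatrix} -1 & 0\\ 0 & -\alpha \end{bmatrix} = \sigma_{\alpha}(1),$$
which is exactly the generator of the action defining the semidirect sum.

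I would then define $\phi : \operatorname{A}_{3,5}^{\alpha} \to \mathbb{R}^2 \rtimes_{\sigma_{\alpha}} \mathbb{R}$ on the basis by $e_1 \mapsto ((1,0),0)$, $e_2 \mapsto ((0,1),0)$, $e_3 \mapsto ((0,0),1)$, and extend it linearly; this is manifestly a linear isomorphism. To conclude it is a Lie algebra isomorphism I would check that it preserves brackets on basis pairs. For instance $[\phi(e_1), \phi(e_3)] = (-\sigma_{\alpha}(1)(1,0),\, 0) = ((1,0),0) = \phi(e_1) = \phi([e_1, e_3])$, and the analogous computation for $(e_2, e_3)$ returns $\alpha\,\phi(e_2) = \phi([e_2, e_3])$; every remaining basis bracket vanishes on both sides. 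Note also that $\sigma_{\alpha}$ is automatically a Lie algebra homomorphism, since $\mathbb{R}$ is one-dimensional and both sides of $\sigma_{\alpha}([s,t]) = [\sigma_{\alpha}(s), \sigma_{\alpha}(t)]$ are zero, so the target is a bona fide Lie algebra.

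The only genuine point of care—not a real obstacle—is the sign bookkeeping. Because the table records $[e_1, e_3] = e_1$ rather than $[e_3, e_1] = e_1$, the operator $\operatorname{ad}(e_3)$ picks up the minus signs visible in $\sigma_{\alpha}(t)$, and one must carry these signs consistently through the semidirect-product bracket so that $\phi$ reproduces the correct structure constants. Once the convention is pinned down, the verification is the short basis computation above.
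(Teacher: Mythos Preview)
Your proof is correct and follows essentially the same approach as the paper: both arguments single out the basis $\phi(e_1)=((1,0),0)$, $\phi(e_2)=((0,1),0)$, $\phi(e_3)=((0,0),1)$ of $\mathbb{R}^2\rtimes_{\sigma_\alpha}\mathbb{R}$ and verify by direct computation that the brackets on this basis reproduce exactly the structure constants of $\operatorname{A}_{3,5}^{\alpha}$. Your write-up is slightly more detailed in spelling out the sign bookkeeping and the fact that $\sigma_\alpha$ is a Lie algebra homomorphism, but the underlying argument is identical.
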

\begin{proof}
Consider the following basis $\mathscr{B} = \left\lbrace X, Y, Z\right\rbrace$ of $\mathbb{R}^2 \rtimes_{\sigma_{\alpha}}\mathbb{R}$ where
$$X = \left(\begin{bmatrix}
1\\
0
\end{bmatrix}, 0 \right), \quad Y = \left(\begin{bmatrix}
0\\
1
\end{bmatrix}, 0 \right), \quad Z = \left(\begin{bmatrix}
0\\
0
\end{bmatrix}, 1 \right).$$
We recall that if $(x, t), (y, r) \in \mathbb{R}^2 \rtimes_{\sigma_{\alpha}}\mathbb{R}$, where $x, y \in \mathbb{R}^2$ and $t, r \in \mathbb{R}$. Then their bracket is given by
$$[(x, t), (y, r)] = \big([x, y]_{\mathbb{R}^2} + \sigma_{\alpha}(t)(y) - \sigma_{\alpha}(r)(x), [t, r]_{\mathbb{R}}\big)$$
where $[, ]_{\mathbb{R}^2}$ and $[, ]_{\mathbb{R}}$ are the brackets in $\mathbb{R}^2$ and $\mathbb{R}$ respectively. According to this fact, we obtain that the brackets of elements of $\mathscr{B}$ are
$$[X, Y] = 0, \qquad [X, Z] = X, \qquad [Y, Z] = \alpha Y.$$
These are exactly the brackets of the Lie algebra $\operatorname{A}_{3, 5}^{\alpha}$. Hence $\operatorname{A}_{3, 5}^{\alpha} \cong \mathbb{R}^2 \rtimes_{\sigma_{\alpha}}\mathbb{R}$.
\end{proof}
According to \cite{ha2009left}, any three-dimensional solvable nonunimodular Lie algebra is isomorphic to either $\mathfrak{g}_I$ or $\mathfrak{g}_c$ for some $c \in \mathbb{R}$ where
$$\mathfrak{g}_I \cong \mathbb{R}^2 \rtimes_{\sigma_{I}}\mathbb{R}, \quad\text{where}\quad \sigma_{I}(t) = \begin{bmatrix}
t & 0\\
0 & t
\end{bmatrix};$$
$$\mathfrak{g}_c \cong \mathbb{R}^2 \rtimes_{\sigma_{c}}\mathbb{R}, \quad\text{where}\quad \sigma_{c}(t) = \begin{bmatrix}
0 & -ct\\
t & 2t
\end{bmatrix}.$$
\begin{proposition}
The Lie algebra $\operatorname{A}_{3, 5}^{\alpha}$ is isomorphic to $\mathfrak{g}_c$ for some $c \in \mathbb{R}$, in fact we have
$$\operatorname{A}_{3, 5}^{\alpha} \cong \mathbb{R}^2 \rtimes_{\sigma_{\alpha}}\mathbb{R} \cong \mathbb{R}^2 \rtimes_{\sigma_{c}}\mathbb{R} \cong \mathfrak{g}_c.$$
\end{proposition}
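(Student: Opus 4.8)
The plan is to exploit the fact that for these three-dimensional algebras the two-dimensional abelian piece coincides with the derived subalgebra, hence is characteristic, which forces any isomorphism to carry one structure derivation to a scalar multiple of a conjugate of the other; the problem then collapses to a conjugacy-of-$2\times 2$-matrices question that I can settle with traces and determinants. First I would record, using the previous proposition, that $\operatorname{A}_{3,5}^{\alpha} \cong \mathbb{R}^2 \rtimes_{\sigma_{\alpha}}\mathbb{R}$ with structure derivation $D_{\alpha} := \sigma_{\alpha}(1) = \begin{bmatrix} -1 & 0 \\ 0 & -\alpha \end{bmatrix}$, and that for $\mathfrak{g}_c \cong \mathbb{R}^2 \rtimes_{\sigma_c}\mathbb{R}$ the derivation is $D_c := \sigma_c(1) = \begin{bmatrix} 0 & -c \\ 1 & 2 \end{bmatrix}$. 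A direct bracket computation gives $[\operatorname{A}_{3,5}^{\alpha}, \operatorname{A}_{3,5}^{\alpha}] = \operatorname{span}(e_1, e_2) = \mathbb{R}^2$, so the $\mathbb{R}^2$ factor is the derived subalgebra in both models.

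Next I would establish the isomorphism criterion. Since the derived subalgebra is characteristic, every isomorphism $\phi : \mathbb{R}^2 \rtimes_{D}\mathbb{R} \to \mathbb{R}^2 \rtimes_{D'}\mathbb{R}$ automatically preserves the $\mathbb{R}^2$ factor, acting on it by some $P \in GL_2(\mathbb{R})$ and sending $Z \mapsto \lambda Z' + w$ with $\lambda \neq 0$ and $w \in \mathbb{R}^2$. Because $\mathbb{R}^2$ is abelian the term $[PX, w]$ vanishes, and the only nontrivial bracket $[X, Z] = DX$ then yields $P D X = \lambda D' P X$ for all $X$, i.e. $P D P^{-1} = \lambda D'$. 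Thus $\mathbb{R}^2 \rtimes_{D_\alpha}\mathbb{R} \cong \mathbb{R}^2 \rtimes_{D_c}\mathbb{R}$ precisely when $\tfrac{1}{\lambda}D_{\alpha}$ is conjugate to $D_c$ for some $\lambda \in \mathbb{R}^{*}$.

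The matching is then a matter of the two conjugacy invariants. Equating traces gives $-\tfrac{1+\alpha}{\lambda} = 2$, hence $\lambda = -\tfrac{1+\alpha}{2}$, which is nonzero since $0 < |\alpha| < 1$ forces $1+\alpha \in (0,2)$. Equating determinants gives $c = \tfrac{\alpha}{\lambda^2} = \tfrac{4\alpha}{(1+\alpha)^2}$. With this $c$ the matrix $\tfrac{1}{\lambda}D_{\alpha}$ has eigenvalues $\tfrac{2}{1+\alpha}$ and $\tfrac{2\alpha}{1+\alpha}$, and a one-line computation $1 - c = \tfrac{(1-\alpha)^2}{(1+\alpha)^2}$ shows $D_c$ has exactly these two distinct real eigenvalues; both matrices being diagonalizable with identical spectra, they are conjugate, so the required $P$ exists and the isomorphism $\operatorname{A}_{3,5}^{\alpha} \cong \mathfrak{g}_c$ follows with $c = \tfrac{4\alpha}{(1+\alpha)^2}$.

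The main obstacle is conceptual rather than computational: it is the justification that $\mathbb{R}^2$ is characteristic, since this is what legitimately reduces the abstract isomorphism problem to conjugacy of derivations up to scaling. Once that reduction is in hand, the invariant-matching is routine. I would also remark in passing that $\operatorname{A}_{3,5}^{\alpha} \not\cong \mathfrak{g}_I$, because $D_{\alpha}$ is not a scalar matrix (its eigenvalues $-1$ and $-\alpha$ differ as $\alpha \neq 1$) and so cannot be conjugate to a scalar multiple of the identity; this explains why the class is $\mathfrak{g}_c$ and not $\mathfrak{g}_I$.
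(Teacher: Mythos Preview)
Your proof is correct and follows essentially the same approach as the paper: both reduce the isomorphism question to similarity of $\tfrac{1}{\lambda}D_{\alpha}$ and $D_c$ for some nonzero $\lambda$, arrive at the identical values $\lambda=-\tfrac{1+\alpha}{2}$ and $c=\tfrac{4\alpha}{(1+\alpha)^2}$, and verify the conjugacy via eigenvalues. Your version is in fact more careful than the paper's, since you explicitly justify the reduction (the $\mathbb{R}^2$ factor is the derived subalgebra, hence characteristic) rather than asserting the existence of $P$ and $\lambda$ without comment, and you derive $\lambda$ and $c$ from trace and determinant rather than positing them.
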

\begin{proof}
It is clear that the Lie algebra $\mathbb{R}^2 \rtimes_{\sigma_{\alpha}}\mathbb{R}$ is not isomorphic to $\mathbb{R}^2 \rtimes_{\sigma_{I}}\mathbb{R}$; Because if they are isomorphic, then there exists a matrix $P \in \operatorname{GL}(2, \mathbb{R}) = \operatorname{Aut}(\mathbb{R}^2)$ \edit{and a nonzero real number $\lambda$ such that $\sigma_{I}(\lambda t) = P\sigma_{\alpha}(t)P^{-1}$. This means that $\sigma_{I}(\lambda t)$ and $\sigma_{\alpha}(t)$ have the same eigenvalues, this implies that $t = 0$ or $\lambda = -1$ and $\alpha = 1$. Both cases are impossible because $\operatorname{A}_{3, 5}^{\alpha}$ is not abelian and $0 < |\alpha| < 1$. Therefore $\operatorname{A}_{3, 5}^{\alpha}$ is isomorphic to $\mathfrak{g}_c$ for some $c \in \mathbb{R}$. In fact, put $\lambda = -\frac{1 + \alpha}{2}$ and $c = \frac{4\alpha}{(1 + \alpha)^2}$. Then the matrix $\sigma_{c}(\lambda t)$ has eigenvalues $-t$ and $-\alpha t$. Consequently $\sigma_{c}(\lambda t)$ is similar to $\sigma_{\alpha}(t)$ and hence $\operatorname{A}_{3, 5}^{\alpha}$ and $\mathfrak{g}_c$ are isomorphic.}
\end{proof}
The simply connected Lie group associated with the Lie algebra $\mathfrak{g}_c$ is $G_c = \mathbb{R}^2 \rtimes_{\varphi_{c}}\mathbb{R}$. See pages 873 and 874 in \cite{ha2009left}.\\
We conclude that the simply connected Lie group associated with the Lie algebra $\operatorname{A}_{3, 5}^{\alpha} \oplus \operatorname{A}_1$ is $G_c \times \mathbb{R}$.
\subsection{The simply connected Lie groups associated with nonunimodular indecomposable Lie algebras of dimension four}
The simply connected Lie groups associated with the indecomposable Lie algebras of type $(R)$ in Table 1 are all described by \cite{biggs2016classification}. We recall their structures as follows
\begin{eqnarray*}
G_{4.2}^{\alpha} &=& \left\lbrace \begin{bmatrix}
e^{-\alpha z} & 0 & 0 & w\\
0 & e^{-z} & -\alpha ze^{-z} & \alpha x\\
0 & 0 & e^{-z} & y\\
0 & 0 & 0 & 1
\end{bmatrix} \;:\; w, x, y, z \in \mathbb{R}\right\rbrace\\
G_{4.2}^{1} &=& \left\lbrace \begin{bmatrix}
e^{-z} & 0 & 0 & w\\
0 & e^{-z} & -ze^{-z} & x\\
0 & 0 & e^{-z} & y\\
0 & 0 & 0 & 1
\end{bmatrix} \;:\; w, x, y, z \in \mathbb{R}\right\rbrace\\
G_{4.3} &=& \left\lbrace \begin{bmatrix}
e^{-z} & 0 & 0 & w\\
0 & 1 & -z & x\\
0 & 0 & 1 & y\\
0 & 0 & 0 & 1
\end{bmatrix} = p(w, x, y, z) \;:\; w, x, y, z \in \mathbb{R}\right\rbrace\\
G_{4.4} &=& \left\lbrace \begin{bmatrix}
e^{-z} & -ze^{-z} & \frac{1}{2}z^2e^{-z} & w\\
0 & e^{-z} & -ze^{-z} & x\\
0 & 0 & e^{-z} & y\\
0 & 0 & 0 & 1
\end{bmatrix} \;:\; w, x, y, z \in \mathbb{R}\right\rbrace\\
G_{4.5}^{\alpha, \beta} &=& \left\lbrace \begin{bmatrix}
e^{-z} & 0 & 0 & w\\
0 & e^{-\alpha z} & 0 & y\\
0 & 0 & e^{-\beta z} & x\\
0 & 0 & 0 & 1
\end{bmatrix} \;:\; w, x, y, z \in \mathbb{R}\right\rbrace\\
G_{4.7} &=& \left\lbrace \begin{bmatrix}
e^{-2z} & -ye^{-z} & (x + yz)e^{-z} & 2w\\
0 & e^{-z} & -ze^{-z} & x\\
0 & 0 & e^{-z} & y\\
0 & 0 & 0 & 1
\end{bmatrix} \;:\; w, x, y, z \in \mathbb{R}\right\rbrace
\end{eqnarray*}
\begin{eqnarray*}
G_{4.8}^{\alpha} &=& \left\lbrace \begin{bmatrix}
e^{-(1 + \alpha)z} & x & w\\
0 & e^{-\alpha z} & y\\
0 & 0 & 1
\end{bmatrix} \;:\; w, x, y, z \in \mathbb{R}\right\rbrace.
\end{eqnarray*}
\section{Metrics on nonunimodular 4-dimensional Lie groups}
A left-invariant Riemannian metric on a Lie group $G$ is an inner product $\langle ., .\rangle$ defined on the tangent space of $G$, which varies smoothly as one moves across the group, and is preserved under left translations. In other words, the left translations are isometries with respect to this inner product. This means that 
$$\langle u, v\rangle_{p} = \langle d_{p} L_{a}(u), d_{p} L_{a}(v)\rangle_{ap} \quad \forall p, a \in G \quad \forall u, v \in T_{p}G.$$
This is the general definition of an isometry. However, because the isometry group of a left-invariant Riemannian metric on a Lie group $G$ is entirely determined by the isotropy subgroup at the identity element $e$, we can reformulate this definition in the form presented in the introduction.\\
There is a bijective correspodence between left invariant Riemannian metrics on a Lie group $G$, and inner products on the Lie algebra $\mathfrak{g}$ of $G$. In fact, if $\langle ., .\rangle$ is an inner product on $\mathfrak{g}$, put 
$$\langle u, v\rangle_{p} = \langle d_{p} L_{p^{-1}}(u), d_{p} L_{p^{-1}}(v)\rangle \quad \forall p \in G \quad \forall u, v \in T_{p}G.$$
Then $\langle ., .\rangle_{p}$ defines a left invariant Riemannian metric on $G$. Thus, the classification of left invariant Riemannian metrics on a Lie group $G$, is equivalent to the classification of inner products on its Lie algebra $\mathfrak{g}$.\\
The space of inner products on $\mathfrak{g}$ is the space of symmetric, positive definite matrices denoted $\operatorname{S}_n$. Consider the following notations\\
$\operatorname{Tsup}_n$ := the group consisting of upper triangular matrices with positive diagonal entries.\\
$\operatorname{Tinf}_n$ := the group consisting of lower triangular matrices with positive diagonal entries.\\
$\operatorname{D}_n^{+}$ := the group consisting of diagonal matrices with positive elements.
\begin{proposition} \label{bijection}
The following map is bijective
$$ \begin{array}{rcl}
\varphi : \operatorname{Tsup}_{n}&\longrightarrow& \operatorname{S}_{n}\\
B &\longmapsto& (B^{-1})^t(B^{-1})
\end{array} $$
\end{proposition}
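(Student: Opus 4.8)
The plan is to factor $\varphi$ through matrix inversion and then recognise the remaining map as the Cholesky factorisation. First I would check that $\varphi$ is well defined. If $B \in \operatorname{Tsup}_n$, then its inverse $C := B^{-1}$ is again upper triangular with positive diagonal entries (these entries are the reciprocals of those of $B$), so $C \in \operatorname{Tsup}_n$; and for an invertible matrix $C$ the product $C^t C$ is symmetric and satisfies $x^t C^t C x = \|Cx\|^2 > 0$ for every $x \neq 0$, whence $C^t C \in \operatorname{S}_n$. Thus $\varphi(B) \in \operatorname{S}_n$ for all $B$, and the map makes sense.

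Next, since $B \mapsto B^{-1}$ is a bijection of $\operatorname{Tsup}_n$ onto itself, $\varphi$ is the composite of this bijection with the map $\psi : \operatorname{Tsup}_n \to \operatorname{S}_n$, $C \mapsto C^t C$. Hence it suffices to show that $\psi$ is a bijection, and its inverse will then give the inverse of $\varphi$. Now injectivity and surjectivity of $\psi$ are precisely the uniqueness and the existence, respectively, of the Cholesky factorisation: every $S \in \operatorname{S}_n$ can be written uniquely as $S = C^t C$ with $C$ upper triangular and with positive diagonal entries.

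I would establish this factorisation by induction on $n$, which amounts to running the Cholesky algorithm and checking it never breaks down. Writing the equation $S = C^t C$ entrywise and using that $C$ is upper triangular gives $S_{11} = C_{11}^2$, forcing $C_{11} = \sqrt{S_{11}}$; the remaining first-row entries are then determined by $S_{1j} = C_{11} C_{1j}$. Writing $C$ in block form with first row $(C_{11}, c^t)$ and lower-right block $C'$, the identity $S = C^t C$ reduces the determination of $C'$ to the equation $C'^t C' = S'' - \tfrac{1}{S_{11}} s s^t$, where $S''$ and $s$ are the corresponding blocks of $S$. The right-hand side is the Schur complement of the $(1,1)$ pivot, and one checks it is again symmetric positive definite of size $n-1$, so the induction hypothesis applies and fixes the remaining entries uniquely. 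This simultaneously yields existence (the recursion produces a $C$) and uniqueness (every entry is forced at each step), so $\psi$, and therefore $\varphi$, is a bijection.

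The main obstacle is the positivity bookkeeping in the inductive step: one must verify that each diagonal quantity appearing under a square root is strictly positive and that the Schur complement stays positive definite, so that the construction is well defined and remains inside $\operatorname{Tsup}_n$. Both facts follow from the positive-definiteness of $S$ — for instance by testing the quadratic form against suitable vectors, or equivalently by using that all leading principal minors of a positive definite matrix are positive — and once they are secured the argument closes. Finally I would record the inverse map explicitly: $\varphi^{-1}(S) = C^{-1}$, where $S = C^t C$ is the Cholesky factorisation of $S$.
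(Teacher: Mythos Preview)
Your proof is correct. The route differs from the paper's in two respects. For injectivity, the paper argues directly: if $\varphi(B)=\varphi(C)$ then $M:=C^{-1}B\in\operatorname{Tsup}_n$ satisfies $M^{-1}=M^t$, hence $M$ lies in $\operatorname{Tsup}_n\cap\operatorname{Tinf}_n=\operatorname{D}_n^{+}$ and is orthogonal, forcing $M=I_n$. This avoids any induction or appeal to uniqueness of a factorisation. For surjectivity, the paper applies Gram--Schmidt to the canonical basis with respect to the inner product $\langle u,v\rangle=u^tAv$, producing an orthonormal basis whose change-of-basis matrix $X\in\operatorname{Tsup}_n$ satisfies $X^tAX=I_n$, i.e.\ $A=\varphi(X)$. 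Your inductive Cholesky construction via Schur complements is of course equivalent in content (Gram--Schmidt and Cholesky are two faces of the same decomposition), but the paper's packaging is shorter: it offloads the positivity bookkeeping you flag as the main obstacle onto the Gram--Schmidt procedure, where it is automatic. Your approach has the compensating advantage of handling existence and uniqueness in a single induction and giving the inverse map explicitly.
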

\begin{proof}
It is clear that the map $\varphi$ is well defined. For the injectivity, let $B, C \in \operatorname{Tsup}_{n}$ such that $\varphi(B) = \varphi(C)$. This is equivalent to the following equality $B^{-1}C = (C^{-1}B)^{t}$. Put $M = C^{-1}B$, then 
$B^{-1}C = (C^{-1}B)^{t} \Leftrightarrow M^{-1} = M^{t}$. Since $\operatorname{Tsup}_{n}$ is a group, then $M, M^{-1} \in \operatorname{Tsup}_{n}$. Since $M^{-1} = M^{t} \in \operatorname{Tinf}_n$, then $M \in \operatorname{Tsup}_n \cap \operatorname{Tinf}_n = \operatorname{D}_n^{+}$. Hence we have $M^{-1} = M^{t} = M$, then $M = I_n$. This implies that $B = C$ and $\varphi$ is injective.\\
For the surjectivity, let $A \in \operatorname{S}_{n}$ and consider the inner product on $\mathbb{R}^{n}$ defined by 
$$\langle u, v\rangle = u^{t}Av, \forall u, v \in \mathbb{R}^{n}.$$
Let $\mathcal{B} = \left\lbrace e_1, ..., e_n\right\rbrace$ be the canonical basis of $\mathbb{R}^{n}$. The Gram-Schmidt procedure produces an orthonormal basis $\mathscr{B} = \left\lbrace v_1, ..., v_n\right\rbrace$ of the Euclidean space $\left( \mathbb{R}^{n}, \langle ., .\rangle\right)$ such that
$$[v_1, ..., v_n] = X \in Tsup_n.$$
This means that $Xe_i = v_i$. Since $\mathscr{B} = \{v_1, ..., v_n\}$ is an orthonormal basis of $(\mathbb{R}^n, \langle ., . \rangle)$, then
$$\operatorname{Mat}\left(\langle ., .\rangle , \mathscr{B}\right) =X^tAX = I_n.$$
Thus $A = (X^{-1})^{t}(X^{-1}) = \varphi(X)$. Hence $\varphi$ is surjective.\\
Therefore $\varphi$ is a bijection. This result was already proven in \cite{van2017metrics}.
\end{proof}
\begin{proposition} \cite{van2017metrics} 
The set of inner products on $\mathfrak{g}$ is $(n^2 + n)/2$-dimensional, and can be identified with the set of upper triangular matrices with positive diagonal entries.
\end{proposition}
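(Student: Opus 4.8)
The plan is to deduce both assertions directly from the bijection of Proposition~\ref{bijection} together with an elementary parameter count, so the proof is essentially formal. The starting point is the observation already recorded in the text: fixing the basis $\mathcal{B}$ of $\mathfrak{g}$ identifies each inner product $\langle .,.\rangle$ on $\mathfrak{g}$ with its Gram matrix $A = \operatorname{Mat}(\langle .,.\rangle, \mathcal{B}) \in \operatorname{S}_n$, which is symmetric and positive definite; conversely, every $A \in \operatorname{S}_n$ defines an inner product via $(u,v) \mapsto u^t A v$. Hence the set of inner products on $\mathfrak{g}$ is in canonical bijection with $\operatorname{S}_n$, and it suffices to analyze $\operatorname{S}_n$.

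For the dimension, I would observe that $\operatorname{S}_n$ is contained in the real vector space of all symmetric $n\times n$ matrices. Such a matrix is determined by the entries lying on and above the diagonal, of which there are $n$ on the diagonal and $(n^2-n)/2$ strictly above; hence the ambient space has dimension $n + (n^2-n)/2 = (n^2+n)/2$. The point to emphasize is that positive definiteness is an \emph{open} condition (the strict inequalities on the leading principal minors, say), so $\operatorname{S}_n$ is an open subset of this vector space and therefore inherits its dimension $(n^2+n)/2$.

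For the identification with upper triangular matrices with positive diagonal, I would simply invoke Proposition~\ref{bijection}, which asserts that $\varphi : \operatorname{Tsup}_n \to \operatorname{S}_n$, $B \mapsto (B^{-1})^t (B^{-1})$, is a bijection. Composing it with the identification of inner products with $\operatorname{S}_n$ from the first step produces a bijection between the set of inner products on $\mathfrak{g}$ and $\operatorname{Tsup}_n$, which is exactly the claimed identification.

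Since the two main ingredients (the Gram-matrix description of inner products and Proposition~\ref{bijection}) are already available, there is no serious obstacle. The only step deserving a word of care is the meaning of ``dimension'': both $\operatorname{S}_n$ and $\operatorname{Tsup}_n$ are open subsets of affine spaces rather than linear subspaces, so one must note that the positivity constraints, being open, do not lower the dimension. If a smoothness statement is wanted, one can further remark that $\varphi$ is a diffeomorphism (a Cholesky-type factorization), which reproves the count $(n^2+n)/2$ by exhibiting $\operatorname{S}_n$ as the smooth image of the $(n^2+n)/2$-dimensional set $\operatorname{Tsup}_n$.
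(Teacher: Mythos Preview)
Your argument is correct and is exactly the natural one suggested by the surrounding text: the paper does not supply a separate proof of this proposition but simply cites \cite{van2017metrics}, having already established the bijection $\varphi$ of Proposition~\ref{bijection} that does the real work. Your deduction of the dimension from the open condition on $\operatorname{S}_n$ and of the identification from $\varphi$ is precisely what is implicit in the paper's presentation.
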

The bijection $\varphi$ in Proposition \ref{bijection} is the key that simplifies the classification of left-invariant Riemannian metrics on Lie groups. It was used in \cite{van2017metrics,aitbenhaddou2025classification} to obtain a classification of left-invariant Riemannian metrics on four-dimensional Lie groups.
\section{The isometry groups}
Let $\theta$ be an automorphism of $G$ and let $g$ be a left invariant Riemannian metric on $G$. The pullback $\theta^{\ast}g$ of $g$ by $\theta$ is defined by:
$\theta^{\ast}g(u, v) = g\left(\theta_{\ast}^{-1}(u), \theta_{\ast}^{-1}(v)\right) \; \forall u, v \in \mathfrak{g}$ where $\theta_{\ast}$ is the differential of $\theta$ at the identity element $e$ of $G$. This equation is equivalent to the following matrix equation $[g] = [\theta_{\ast}]^t[\theta^{\ast}g][\theta_{\ast}]$. Hence in term of matrix calculation, the group of isometric automorphisms of $G$ is given by \cite{ha2012isometry}
$$\operatorname{Aut}(G)_{g} = \left\lbrace \theta \in \operatorname{Aut}(G)\,\, /\,\, [g] = [\theta_{\ast}]^t[g][\theta_{\ast}] \right\rbrace.$$
It is well known that if two metrics are equivalent up to a diffeomorphism (or, in particular, up to an automorphism), then their isometry groups are conjugate; see \cite{ayad2024isometry, ha2012isometry}. Thus, it suffices to describe the isometry group of the representative metrics for non-equivalent classes of metrics on Lie groups.
\begin{remark}
\begin{enumerate}
\item In each nonunimodular 4-dimensional Lie group, we will investigate different metrics that yield different isometry groups. This means that for any other metric, its isometry group is isomorphic to that of one of the investigated metrics.
\item In this paper, we choose the first class of metrics to be the one with the maximal group of isometric automorphisms.
\end{enumerate}
\end{remark}
\begin{remark} \label{orthogonal}
The isotropy subgroup $\operatorname{Isom}(G, g)_e$ is compact. If $\theta$ is an element of $\operatorname{Isom}(G, g)_e$, the differential $\theta_{\ast} = d_e\theta : \mathfrak{g} \longrightarrow \mathfrak{g}$ of $\theta$ at $e$ is an orthogonal transformation on the vector space $\mathfrak{g} = T_eG$. That is, for any $x, y \in T_eG$: $g(\theta_{\ast}^{-1}x, \theta_{\ast}^{-1}y) = g(x, y).$\\
Therefore, by lemma \ref{helgason}, $\operatorname{Isom}(G, g)_e$ can be injected in $\operatorname{O}(\mathfrak{g}, g)$. Hence
$$\operatorname{Aut}(G)_g \subset \operatorname{Isom}(G, g)_e \subset \operatorname{O}(4), \quad (\text{see page 193 in \cite{ha2012isometry}}).$$
Throughout this paper, $\operatorname{O}(n)$ denotes the orthogonal group in $n$ dimensions.
\end{remark}
\subsection{The isometry group of the Lie group $G_{2.1} \times \mathbb{R}^2$}
By theorem 3.1. in \cite{aitbenhaddou2025classification}, any left-invariant Riemannian metric on $\operatorname{A}_{2} \oplus 2\operatorname{A}_1 = \operatorname{Lie}\left(G_{2.1} \times \mathbb{R}^2\right)$ is equivalent, up to automorphism, to the following metric
$$
U = \begin{bmatrix}
\alpha & \beta & \gamma & 0\\
0 & 1 & \lambda & \edit{\delta}\\
0 & 0 & 1 & 0\\
0 & 0 & 0 & 1
\end{bmatrix} \quad \alpha > 0, \quad \edit{\beta, \gamma \geq 0, \quad \lambda, \delta \in \mathbb{R}}.$$
We distinguish \edit{five} cases associated with this metric
\begin{enumerate}
\item If $\beta = \gamma = \lambda = \edit{\delta} = 0$ in $U$, then according to the bijection $\varphi$, the symmetric positive definite matrix associated to $U$ is $M_1 = (U^{-1})^t(U^{-1}) = \operatorname{diag}\left\lbrace \frac{1}{\alpha^2}, 1, 1, 1\right\rbrace$.
\item If $\beta \neq 0$ and $\gamma = \lambda = \edit{\delta} = 0$ in $U$, by the bijection $\varphi$, the symmetric positive definite matrix associated to $U$ is 
$$M_2 = (U^{-1})^t(U^{-1}) = \begin{bmatrix}
\frac{1}{\alpha^2} & \frac{-\beta}{\alpha^2} & 0 & 0\\
\frac{-\beta}{\alpha^2} & 1 + \frac{\beta^2}{\alpha^2} & 0 & 0\\
0 & 0 & 1 & 0\\
0 & 0 & 0 & 1
\end{bmatrix}.$$
\item If $\lambda \neq 0$ and $\beta = \gamma = \edit{\delta} = 0$ in $U$, by the bijection $\varphi$, the symmetric positive definite matrix associated to $U$ is 
$$M_3 = (U^{-1})^t(U^{-1}) = \begin{bmatrix}
\frac{1}{\alpha^2} & 0 & 0 & 0\\
0 & 1 & -\lambda & 0\\
0 & -\lambda & 1 + \lambda^2 & 0\\
0 & 0 & 0 & 1
\end{bmatrix}.$$
If we choose the metric $U$ where $\gamma > 0$ and $\beta = \lambda = \edit{\delta} = 0$, we obtain that its group of isometric automorphisms is similar to that of $M_3$.
\item If $\beta \neq 0$, $\lambda > 0$ and $\gamma = \edit{\delta} = 0$ in $U$, then based on the bijection $\varphi$, the symmetric positive definite matrix associated to $U$ is 
$$M_4 = (U^{-1})^t(U^{-1}) = \begin{bmatrix}
\frac{1}{\alpha^2} & \frac{-\beta}{\alpha^2} & \frac{\beta\lambda}{\alpha^2} & 0\\
\\
\frac{-\beta}{\alpha^2} & 1 + \frac{\beta^2}{\alpha^2} & \frac{-\beta^2\lambda}{\alpha^2} - \lambda & 0\\
\\
\frac{\beta\lambda}{\alpha^2} & \frac{-\beta^2\lambda}{\alpha^2} - \lambda & 1 + \lambda^2 + \frac{\beta^2\lambda^2}{\alpha^2} & 0\\
\\
0 & 0 & 0 & 1
\end{bmatrix}.$$
If we choose the metric $U$ where $\beta, \gamma > 0$, $\lambda \neq 0$ and $\edit{\delta = 0}$, we obtain that its group of isometric automorphisms is the same as that of the metric $M_4$.
\item \edit{If all the parameters in $U$ are nonzero, then based on the bijection $\varphi$, the symmetric positive definite matrix associated to $U$ is 
$$M_5 = (U^{-1})^t(U^{-1}) = \begin{bmatrix}
\frac{1}{\alpha^2} & -\frac{\beta}{\alpha^2} & \frac{\beta\lambda - \gamma}{\alpha^2} & \frac{\beta\delta}{\alpha^2} \\
\\
-\frac{\beta}{\alpha^2} & \frac{\beta^2}{\alpha^2} + 1 & \frac{\beta(-\beta\lambda + \gamma)}{\alpha^2} - \lambda & -\frac{\beta^2\delta}{\alpha^2} - \delta \\
\\
\frac{\beta\lambda - \gamma}{\alpha^2} & \frac{\beta(-\beta\lambda + \gamma)}{\alpha^2} - \lambda & \frac{(-\beta\lambda + \gamma)^2}{\alpha^2} + \lambda^2 + 1 & -\frac{(-\beta\lambda + \gamma)\beta\delta}{\alpha^2} + \lambda\delta \\
\\
\frac{\beta\delta}{\alpha^2} & -\frac{\beta^2\delta}{\alpha^2} - \delta & -\frac{(-\beta\lambda + \gamma)\beta\delta}{\alpha^2} + \lambda\delta & \frac{\beta^2\delta^2}{\alpha^2} + \delta^2 + 1
\end{bmatrix}.$$}
\end{enumerate}
\begin{theorem}
The group of isometric automorphisms of $G_{2.1} \times \mathbb{R}^2$ is given by
\begin{eqnarray*}
\operatorname{Aut}\left(G_{2.1} \times \mathbb{R}^2\right)_{M_1}  &=& \operatorname{diag}\left\lbrace 1, \pm1, \operatorname{O}(2)\right\rbrace \\
\operatorname{Aut}\left(G_{2.1} \times \mathbb{R}^2\right)_{M_2}  &\cong& \operatorname{O}(2)\\
\operatorname{Aut}\left(G_{2.1} \times \mathbb{R}^2\right)_{M_3}  &\cong& (\mathbb{Z}_2)^2\\
\operatorname{Aut}\left(G_{2.1} \times \mathbb{R}^2\right)_{M_4}  &\cong& \mathbb{Z}_2\\
\edit{\operatorname{Aut}\left(G_{2.1} \times \mathbb{R}^2\right)_{M_5}}  &=& \edit{\left\lbrace I_4\right\rbrace.}
\end{eqnarray*}
\end{theorem}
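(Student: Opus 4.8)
The plan is to turn the statement into a finite computation governed by the single identity $\operatorname{Aut}(G)_{M_i} = \{\theta \in \operatorname{Aut}(G) : [\theta_\ast]^t M_i [\theta_\ast] = M_i\}$, so that the whole proof rests on two ingredients: knowing $\operatorname{Aut}(\mathfrak{g})$ explicitly, and intersecting it with the orthogonal group of each representative metric $M_i$.

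First I would determine $\operatorname{Aut}(\mathfrak{g})$ for $\mathfrak{g} = \mathfrak{aff}(\mathbb{R})\oplus\mathbb{R}^2$ once and for all. Since the only nonzero bracket is $[e_1,e_2]=e_2$, one has $[\sum x_ie_i,\sum y_je_j] = (x_1y_2-x_2y_1)e_2$; hence every automorphism preserves the derived ideal $\mathfrak{g}'=\langle e_2\rangle$ and the centre $\mathfrak{z}=\langle e_3,e_4\rangle$. Writing $\theta(e_2)=a_{22}e_2$ and $\theta(\mathfrak{z})\subseteq\mathfrak{z}$, and then imposing $\theta([e_1,e_2])=[\theta e_1,\theta e_2]$ forces the $e_1$-coefficient of $\theta(e_1)$ to equal $1$, so that
$$[\theta_\ast]=\begin{bmatrix} 1 & 0 & 0 & 0\\ a_{21} & a_{22} & 0 & 0\\ a_{31} & 0 & a_{33} & a_{34}\\ a_{41} & 0 & a_{43} & a_{44}\end{bmatrix},\qquad a_{22}\neq 0,\quad \begin{bmatrix} a_{33} & a_{34}\\ a_{43} & a_{44}\end{bmatrix}\in\operatorname{GL}(2,\mathbb{R}).$$
This eight-parameter family is exactly $\operatorname{Aut}(G_{2.1}\times\mathbb{R}^2)\cong\operatorname{Aut}(\mathfrak{g})$.

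Next, for each $M_i$ I would substitute this general $\theta_\ast$ into $[\theta_\ast]^tM_i[\theta_\ast]=M_i$ and read off the ten scalar equations $\langle\theta_\ast e_j,\theta_\ast e_k\rangle_{M_i}=(M_i)_{jk}$. The decisive one is the $(1,1)$ equation: because $M_i$ is positive definite, the three coupling parameters $a_{21},a_{31},a_{41}$ enter through a positive quadratic form that is forced to vanish, so for metrics whose first row is otherwise diagonal (namely $M_1$ and $M_3$) these parameters are killed immediately, while when $\beta$ or $\gamma$ is switched on a linear term appears and the conclusion must be extracted together with the $(1,2)$ equation. Once the first column is controlled, the surviving equations split into the affine block $\langle e_1,e_2\rangle$ and the abelian block $\langle e_3,e_4\rangle$, which decouple entirely for $M_1$ and $M_2$ but get tied together by the off-diagonal metric entries $\lambda$ (linking $e_2,e_3$), $\gamma$ (linking $e_1,e_3$) and $\delta$ (linking $e_2,e_4$) in the later cases.

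The main obstacle, and the part that truly distinguishes the five answers, is the bookkeeping of the discrete reflection pieces together with the rigidifying effect of each successive coupling. For the most symmetric metric $M_1$ the abelian block is unconstrained, producing the factor $\operatorname{O}(2)$ alongside the independent sign $a_{22}=\pm1$; as each new off-diagonal parameter is turned on it imposes one further scalar relation — for instance the $\lambda$-coupling in $M_3$ forces $a_{33}=a_{22}$, collapsing the $\operatorname{O}(2)$ to $(\mathbb{Z}_2)^2$, and the joint presence of $\beta,\lambda$ and then $\gamma,\delta$ progressively pins every remaining sign until only $I_4$ survives for $M_5$. Concretely I would organise the five metrics as a descending chain, at each step recording precisely which of the ten equations newly becomes nontrivial, solving the resulting (at worst quadratic) system, and identifying the group by exhibiting explicit generators and verifying their relations; the positive-definiteness of each $M_i$ is exactly what keeps every such system finite and elementary, so the difficulty is organisational rather than conceptual, concentrated in the correct counting of which reflections survive in the mixed cases $M_2$ and $M_4$.
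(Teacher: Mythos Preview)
Your proposal is correct and follows essentially the same approach as the paper: compute $\operatorname{Aut}(\mathfrak{g})$ explicitly and then, for each representative metric $M_i$, solve the matrix equation $A^tM_iA=M_i$ to read off which automorphisms survive. The only organisational differences are that you derive the eight-parameter automorphism group from the bracket structure (the paper cites \cite{christodoulakis2003automorphisms}), and that you eliminate the first-column entries $a_{21},a_{31},a_{41}$ via the $(1,1)$ equation case by case, whereas the paper disposes of them once and for all by invoking the inclusion $\operatorname{Aut}(G)_g\subset\operatorname{O}(4)$ from Remark~\ref{orthogonal}; your treatment here is in fact slightly more careful, since you correctly note that for the metrics with $\beta\neq0$ the $(1,1)$ equation alone is not enough and must be combined with the $(1,2)$ equation.
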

\begin{proof}
The automorphism group of $\operatorname{A}_{2} \oplus 2\operatorname{A}_1 = \operatorname{Lie}\left(G_{2.1} \times \mathbb{R}^2\right)$ consists of elements of the form \cite{christodoulakis2003automorphisms}
$$A = \begin{bmatrix}
1 & 0 & 0 & 0\\
a_5 & a_6 & 0 & 0\\
a_9 & 0 & a_{11} & a_{12}\\
a_{13} & 0 & a_{15} & a_{16}
\end{bmatrix}.$$
We first note that, by Remark \ref{orthogonal}, for any left-invariant Riemannian metric on the Lie group $G$, we have $\operatorname{Aut}(G)_g \subset \operatorname{O}(4)$. This implies that $a_5 = a_9 = a_{13} = 0$ in $A$. Therefore, it suffices to consider automorphisms of the form $A = \begin{bmatrix}
1 & 0 & 0 & 0\\
0 & a_6 & 0 & 0\\
0 & 0 & a_{11} & a_{12}\\
0 & 0 & a_{15} & a_{16}
\end{bmatrix}$. Let $A$ be an automorphism of this form, then
\begin{eqnarray*}
A \in \operatorname{Aut}\left(G_{2.1} \times \mathbb{R}^2\right)_{M_1} &\Leftrightarrow& A^tM_1A = M_1\\
&\Leftrightarrow& a_6 = \pm1, \begin{bmatrix}
a_{11} & a_{12}\\
a_{15} & a_{16}
\end{bmatrix} \in \operatorname{O}(2).
\end{eqnarray*}	Hence $\operatorname{Aut}\left(G_{2.1} \times \mathbb{R}^2\right)_{M_1}  = \operatorname{diag}\left\lbrace 1, \pm1, \operatorname{O}(2)\right\rbrace$.\\
For the metric $M_2$, we have
\begin{eqnarray*}
A \in \operatorname{Aut}\left(G_{2.1} \times \mathbb{R}^2\right)_{M_2} &\Leftrightarrow& A^tM_2A = M_2\\
&\Leftrightarrow& a_6 = 1, \begin{bmatrix}
a_{11} & a_{12}\\
a_{15} & a_{16}
\end{bmatrix} \in \operatorname{O}(2).
\end{eqnarray*}	Hence $\operatorname{Aut}\left(G_{2.1} \times \mathbb{R}^2\right)_{M_2}  = \operatorname{diag}\left\lbrace 1, 1, \operatorname{O}(2)\right\rbrace \cong \operatorname{O}(2)$.\\
For the metric $M_3$, one can see that
\begin{eqnarray*}
A \in \operatorname{Aut}\left(G_{2.1} \times \mathbb{R}^2\right)_{M_3} &\Leftrightarrow& A^tM_3A = M_3\\
&\Leftrightarrow& a_6 = a_{11} = \pm1, a_{12} = a_{15} = 0, a_{16} = \pm1
\end{eqnarray*}
Thus
\begin{eqnarray*}
\operatorname{Aut}\left(G_{2.1} \times \mathbb{R}^2\right)_{M_3} &=& \left\lbrace I_4, \begin{bmatrix}
1 & 0 & 0 & 0\\
0 & 1 & 0 & 0\\
0 & 0 & 1 & 0\\
0 & 0 & 0 & -1
\end{bmatrix}, \begin{bmatrix}
1 & 0 & 0 & 0\\
0 & -1 & 0 & 0\\
0 & 0 & -1 & 0\\
0 & 0 & 0 & 1
\end{bmatrix}, \begin{bmatrix}
1 & 0 & 0 & 0\\
0 & -1 & 0 & 0\\
0 & 0 & -1 & 0\\
0 & 0 & 0 & -1
\end{bmatrix} \right\rbrace\\
&\cong& (\mathbb{Z}_2)^2.
\end{eqnarray*}
For the metric $M_4$, one gets that
\begin{eqnarray*}
A \in \operatorname{Aut}\left(G_{2.1} \times \mathbb{R}^2\right)_{M_4} &\Leftrightarrow& A^tM_4A = M_4\\
&\Leftrightarrow& a_{12} = a_{15} = 0, a_6 = a_{11} = 1, a_{16} = \pm1
\end{eqnarray*}
Thus
$$\operatorname{Aut}\left(G_{2.1} \times \mathbb{R}^2\right)_{M_4} = \left\lbrace I_4, \operatorname{diag}\left\lbrace 1, 1, 1, -1\right\rbrace \right\rbrace \cong \mathbb{Z}_2.$$
\edit{Finally, it is easy to verify that the automorphism $\operatorname{diag}\left\lbrace 1, 1, 1, -1\right\rbrace$ does not preserve $M_5$. Hence $\operatorname{Aut}\left(G_{2.1} \times \mathbb{R}^2\right)_{M_5} = \left\lbrace I_4\right\rbrace$.}
\end{proof}
\begin{corollary}
The isometry group of $G_{2.1} \times \mathbb{R}^2$ is given by
\begin{eqnarray*}
\operatorname{Isom}\left(G_{2.1} \times \mathbb{R}^2, M_1\right)  &\cong& \left( G_{2.1} \times \mathbb{R}^2\right) \rtimes \operatorname{diag}\left\lbrace 1, \pm1, \operatorname{O}(2)\right\rbrace\\
\operatorname{Isom}\left(G_{2.1} \times \mathbb{R}^2, M_2\right)  &\cong& \left( G_{2.1} \times \mathbb{R}^2\right) \rtimes \operatorname{O}(2) \\
\operatorname{Isom}\left(G_{2.1} \times \mathbb{R}^2, M_3\right)  &\cong& \left( G_{2.1} \times \mathbb{R}^2\right) \rtimes (\mathbb{Z}_2)^2\\
\operatorname{Isom}\left(G_{2.1} \times \mathbb{R}^2, M_4\right)  &\cong& \left( G_{2.1} \times \mathbb{R}^2\right) \rtimes \mathbb{Z}_2\\
\edit{\operatorname{Isom}\left(G_{2.1} \times \mathbb{R}^2, M_5\right)}  &\cong& \edit{\left( G_{2.1} \times \mathbb{R}^2\right).}
\end{eqnarray*}
\end{corollary}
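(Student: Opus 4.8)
The plan is to exploit the Theorem established earlier in the excerpt: since $G_{2.1}\times\mathbb{R}^2$ has Lie algebra $\operatorname{A}_2\oplus 2\operatorname{A}_1$, which the Proposition confirms is solvable of type $(R)$, we have $\operatorname{Isom}(G,g)\cong G\rtimes\operatorname{Aut}(G)_g$ for every left-invariant metric $g$. Thus the corollary is almost an immediate consequence of the preceding Theorem (the structure theorem) combined with the computation of the isometric automorphism groups carried out in the Theorem just above the corollary. The entire content reduces to substituting the five computed isotropy groups $\operatorname{Aut}(G_{2.1}\times\mathbb{R}^2)_{M_i}$ into the semidirect-product formula.

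First I would invoke the Theorem for simply connected solvable Lie groups of type $(R)$ to write $\operatorname{Isom}(G_{2.1}\times\mathbb{R}^2, M_i)\cong (G_{2.1}\times\mathbb{R}^2)\rtimes\operatorname{Aut}(G_{2.1}\times\mathbb{R}^2)_{M_i}$ for each $i\in\{1,\dots,5\}$. This is legitimate because the Proposition lists precisely the four Lie algebras that fail type $(R)$, and $\operatorname{A}_2\oplus 2\operatorname{A}_1$ is not among them; the group is simply connected by construction in Section 3. Second, I would substitute the five values $\operatorname{diag}\{1,\pm1,\operatorname{O}(2)\}$, $\operatorname{O}(2)$, $(\mathbb{Z}_2)^2$, $\mathbb{Z}_2$, and $\{I_4\}$ from the immediately preceding Theorem, yielding the five lines of the corollary directly; for $M_5$ the factor is trivial, so the semidirect product collapses to $G_{2.1}\times\mathbb{R}^2$ itself.

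One subtle point worth flagging, rather than a genuine obstacle, is the justification that the classification of Section 4 (Theorem 3.1 of the cited classification paper) reduces the study to the five representative metrics $M_1,\dots,M_5$: any other left-invariant metric is equivalent up to automorphism to one of these, and equivalent metrics have conjugate (hence isomorphic) isometry groups, as noted in the remarks of Section 5. I would therefore add a sentence noting that this exhausts all isometry classes, so the five lines give a complete description.

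\emph{The hard part}, such as it is, is not in the corollary itself but lives entirely in the preceding Theorem's computation of $\operatorname{Aut}(G)_{M_i}$; once those five isotropy groups are in hand, the corollary is a formal application of the structure theorem with no further calculation. I would keep the proof to a few lines: cite the Theorem on type $(R)$ groups, note that $\operatorname{A}_2\oplus 2\operatorname{A}_1$ is of type $(R)$ by the Proposition, and read off the five semidirect products from the isotropy groups just computed.
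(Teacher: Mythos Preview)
Your proposal is correct and matches the paper's approach exactly: the corollary is stated without proof in the paper because it follows immediately from the structure theorem for simply connected solvable type $(R)$ groups together with the isometric automorphism groups computed in the preceding theorem, which is precisely what you outline.
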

\subsection{The isometry group of the Lie group $G_2$}
By theorem 3.2. in \cite{aitbenhaddou2025classification}, any left-invariant Riemannian metric on $\operatorname{2A_2} = \operatorname{Lie}(G_2)$ is equivalent, up to automorphism, to the following metric
$$U = \begin{bmatrix}
\alpha & \beta & \gamma & \nu\\
0 & 1 & \lambda & \delta\\
0 & 0 & \mu & 0\\
0 & 0 & 0 & 1
\end{bmatrix} \quad \alpha, \mu > 0, \quad \beta, \nu \geq 0, \quad \gamma, \lambda, \delta \in \mathbb{R}.$$
We distinguish four cases associated with this metric
\begin{enumerate}
\item If $\beta = \nu = \gamma = \lambda = \delta = 0$ and $\alpha = \mu$, then according to the bijection $\varphi$, the symmetric positive definite matrix associated to $U$ is\\ $M_1 = (U^{-1})^t(U^{-1}) = \operatorname{diag}\left\lbrace \frac{1}{\alpha^2}, 1, \frac{1}{\alpha^2}, 1\right\rbrace$.
\item If $\beta = \nu = \gamma = \lambda = \delta = 0$ and $\alpha \neq \mu$, then based on the bijection $\varphi$, the symmetric positive definite matrix associated to $U$ is\\
$M_2 = (U^{-1})^t(U^{-1}) = \operatorname{diag}\left\lbrace \frac{1}{\alpha^2}, 1, \frac{1}{\mu^2}, 1\right\rbrace$.
\item If $\beta > 0$ and $\nu = \gamma = \lambda = \delta = 0$, then by the bijection $\varphi$, the symmetric positive definite matrix associated to $U$ is 
$$M_3 = (U^{-1})^t(U^{-1}) = \begin{bmatrix}
\frac{1}{\alpha^2} & \frac{-\beta}{\alpha^2} & 0 & 0\\
\frac{-\beta}{\alpha^2} & 1 + \frac{\beta^2}{\alpha^2} & 0 & 0\\
0 & 0 & \frac{1}{\mu^2} & 0\\
0 & 0 & 0 & 1
\end{bmatrix}.$$
\item If $\beta > 0$, $\nu \neq 0$ and $\gamma = \lambda = \delta = 0$, then according to the bijection $\varphi$, the symmetric positive definite matrix associated to $U$ is 
$$M_4 = (U^{-1})^t(U^{-1}) = \begin{bmatrix}
\frac{1}{\alpha^2} & \frac{-\beta}{\alpha^2} & 0 & \frac{-\nu}{\alpha^2}\\
\frac{-\beta}{\alpha^2} & 1 + \frac{\beta^2}{\alpha^2} & 0 & \frac{\beta\nu}{\alpha^2}\\
0 & 0 & \frac{1}{\mu^2} & 0\\
\frac{-\nu}{\alpha^2} & \frac{\beta\nu}{\alpha^2} & 0 & 1 + \frac{\nu^2}{\alpha^2}
\end{bmatrix}.$$
\end{enumerate}
We denote by $\mathbf{D}(4)$ the dihedral group of order $8$.
\begin{theorem} \label{possiblegroups}
The possible groups of isometric automorphisms of $G_2$ are
\begin{eqnarray*}
\operatorname{Aut}\left(G_2\right)_{M_1}  &\cong& \mathbf{D}(4)\\
\operatorname{Aut}\left(G_2\right)_{M_2}  &\cong& \left(\mathbb{Z}_2\right)^2\\
\operatorname{Aut}\left(G_2\right)_{M_3}  &\cong& \mathbb{Z}_2\\
\operatorname{Aut}\left(G_2\right)_{M_4}  &=& \{I_4\}.
\end{eqnarray*}
\end{theorem}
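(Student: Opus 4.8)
The plan is to mirror the scheme already used for $G_{2.1}\times\mathbb{R}^2$: first determine $\operatorname{Aut}(\operatorname{2A_2})$, then cut it down to the compact isotropy by Remark \ref{orthogonal}, and finally test each representative $M_1,\dots,M_4$ against the condition $A^{t}M_iA=M_i$. First I would pin down $\operatorname{Aut}(\operatorname{2A_2})$. Since $\operatorname{2A_2}=\mathfrak{aff}(\mathbb{R})\oplus\mathfrak{aff}(\mathbb{R})$ is the direct sum of the two isomorphic indecomposable ideals $\mathfrak{h}_1=\langle e_1,e_2\rangle$ and $\mathfrak{h}_2=\langle e_3,e_4\rangle$ (both are ideals because $[e_3,e_1]=[e_4,e_2]=0$, etc.), a Krull--Schmidt argument shows every automorphism either preserves each ideal or interchanges them. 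Using $\operatorname{Aut}(\mathfrak{aff}(\mathbb{R}))=\{\,\begin{bmatrix}1&0\\ b&d\end{bmatrix}:d\neq 0\,\}$, this yields $\operatorname{Aut}(\operatorname{2A_2})\cong\big(\operatorname{Aut}(\mathfrak{aff}(\mathbb{R}))\times\operatorname{Aut}(\mathfrak{aff}(\mathbb{R}))\big)\rtimes\langle S\rangle$, where $S$ is the involution $e_1\leftrightarrow e_3$, $e_2\leftrightarrow e_4$; that is, a general automorphism is a pair of affine blocks, optionally composed with the swap $S$.

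Next I would invoke Remark \ref{orthogonal}: since $\operatorname{Aut}(G_2)_g\subset\operatorname{O}(4)$, each affine block must be orthogonal, which forces the unipotent parameters $b$ to vanish and each diagonal entry to be $\pm1$. Hence the isotropy automorphisms are confined to the finite group $\Gamma=\left\{\operatorname{diag}(1,\epsilon_1,1,\epsilon_2)\;:\;\epsilon_1,\epsilon_2\in\{\pm1\}\right\}\rtimes\langle S\rangle$. I would then check that $\Gamma\cong\mathbf{D}(4)$: the sign flips $\sigma_2=\operatorname{diag}(1,-1,1,1)$ and $\sigma_4=\operatorname{diag}(1,1,1,-1)$ generate a normal $(\mathbb{Z}_2)^2$, while $S$ conjugates $\sigma_2$ to $\sigma_4$, so $S\sigma_2$ has order $4$ and $\Gamma$ is dihedral of order $8$.

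Finally, for each representative I would impose $A^{t}M_iA=M_i$ on the elements of $\Gamma$ and read off the stabilizer. For $M_1=\operatorname{diag}(1/\alpha^2,1,1/\alpha^2,1)$ every element of $\Gamma$ works, because the matrix is diagonal and is invariant under $S$ (the two factors carry equal scalings), giving $\mathbf{D}(4)$. For $M_2=\operatorname{diag}(1/\alpha^2,1,1/\mu^2,1)$ the inequality $\alpha\neq\mu$ destroys $S$ but keeps both sign flips, giving $(\mathbb{Z}_2)^2$. For $M_3$ the off-diagonal entry $-\beta/\alpha^2$ with $\beta>0$ eliminates $\sigma_2$ (it would flip the sign of $M_{12}$) and $S$ (the two diagonal blocks now differ), while $\sigma_4$ survives, giving $\mathbb{Z}_2$. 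For $M_4$ the extra coupling $\nu\neq0$ between the first factor and $e_4$ also kills $\sigma_4$ (it flips $M_{14},M_{24}$), leaving only $\{I_4\}$.

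The main obstacle is really two bookkeeping points rather than a single hard estimate. The first is justifying that $\operatorname{Aut}(\operatorname{2A_2})$ consists only of factor-preserving maps and the swap $S$, and then correctly identifying $\Gamma$ as $\mathbf{D}(4)$ rather than $(\mathbb{Z}_2)^3$ — it is precisely the fact that $S$ does \emph{not} commute with the individual sign flips (it exchanges $\sigma_2$ and $\sigma_4$) that produces the order-$4$ element and makes the group dihedral. The second is tracking exactly which generator of $\Gamma$ each off-diagonal metric parameter removes as one passes $M_1\to M_2\to M_3\to M_4$, so that the chain $\mathbf{D}(4)\to(\mathbb{Z}_2)^2\to\mathbb{Z}_2\to\{I_4\}$ comes out cleanly; this is where the explicit computation of $A^{t}M_iA$ must be done with care.
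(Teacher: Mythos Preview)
Your approach mirrors the paper's almost exactly: both describe $\operatorname{Aut}(\operatorname{2A_2})$ as the block-diagonal automorphisms of the two $\mathfrak{aff}(\mathbb{R})$ summands together with the swap $S$, both isolate the eight-element group $\Gamma=\{\operatorname{diag}(1,\epsilon_1,1,\epsilon_2)\}\rtimes\langle S\rangle\cong\mathbf{D}(4)$, and both then test which elements of $\Gamma$ satisfy $A^tM_iA=M_i$ for $i=2,3,4$.

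There is, however, a genuine gap in the reduction step --- and the paper commits the same slip. Remark~\ref{orthogonal} gives $\operatorname{Aut}(G_2)_g\subset\operatorname{O}(\mathfrak{g},g)$, not the \emph{standard} $\operatorname{O}(4)$; when $g$ is not diagonal these groups differ, so ``each affine block must be orthogonal, forcing $b=0$'' does not follow a priori. Concretely, for $M_3$ (where $\beta>0$) the matrix
\[
R=\begin{bmatrix}1&0&0&0\\ \dfrac{2\beta}{\alpha^2+\beta^2}&-1&0&0\\ 0&0&1&0\\ 0&0&0&1\end{bmatrix}
\]
is a genuine automorphism of $\operatorname{2A_2}$ and one checks directly that $R^tM_3R=M_3$, yet $R\notin\Gamma$. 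Together with $\sigma_4=\operatorname{diag}(1,1,1,-1)$ this already gives $\langle R,\sigma_4\rangle\cong(\mathbb{Z}_2)^2\subset\operatorname{Aut}(G_2)_{M_3}$, so restricting the search to $\Gamma$ undercounts. The sound procedure is to solve $A^tM_iA=M_i$ over the \emph{full} parametrised automorphism group separately for each $M_i$ --- the shortcut of working inside $\Gamma$ is only legitimate once you have established, metric by metric, that the unipotent parameters are forced to vanish, which holds for the diagonal metrics $M_1,M_2$ but not automatically for $M_3,M_4$.
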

\begin{proof}
The automorphism group of $\operatorname{2A_2} = \operatorname{Lie}(G_2)$ consists of elements of the form \cite{christodoulakis2003automorphisms}
$$\begin{bmatrix}
1 & 0 & 0 & 0\\
a_5 & a_6 & 0 & 0\\
0 & 0 & 1 & a_{12}\\
0 & 0 & a_{15} & a_{16}
\end{bmatrix} \quad \text{or}\quad \begin{bmatrix}
0 & 0 & 1 & 0\\
0 & 0 & a_{7} & a_{8}\\
1 & 0 & 0 & 0\\
a_{13} & a_{14} & 0 & 0
\end{bmatrix}.
$$
Let $A$ be an automorphism of $G_2$ of the above form , then we obtain that
\begin{eqnarray*}
& A \in \operatorname{Aut}\left(G_2\right)_{M_1}\hspace{13cm}\\ \Leftrightarrow & A^tM_1A = M_1\hspace{13cm}\\
\Leftrightarrow & A \in \left\lbrace \begin{bmatrix}
1 & 0 & 0 & 0\\
0 & 1 & 0 & 0\\
0 & 0 & 1 & 0\\
0 & 0 & 0 & 1
\end{bmatrix}, \begin{bmatrix}
1 & 0 & 0 & 0\\
0 & 1 & 0 & 0\\
0 & 0 & 1 & 0\\
0 & 0 & 0 & -1
\end{bmatrix}, \begin{bmatrix}
1 & 0 & 0 & 0\\
0 & -1 & 0 & 0\\
0 & 0 & 1 & 0\\
0 & 0 & 0 & 1
\end{bmatrix}, \begin{bmatrix}
1 & 0 & 0 & 0\\
0 & -1 & 0 & 0\\
0 & 0 & 1 & 0\\
0 & 0 & 0 & -1
\end{bmatrix} \right\rbrace \hspace{2.4cm}\\
& \cup \left\lbrace \begin{bmatrix}
0 & 0 & 1 & 0\\
0 & 0 & 0 & 1\\
1 & 0 & 0 & 0\\
0 & 1 & 0 & 0
\end{bmatrix}, \begin{bmatrix}
0 & 0 & 1 & 0\\
0 & 0 & 0 & -1\\
1 & 0 & 0 & 0\\
0 & 1 & 0 & 0
\end{bmatrix}, \begin{bmatrix}
0 & 0 & 1 & 0\\
0 & 0 & 0 & 1\\
1 & 0 & 0 & 0\\
0 & -1 & 0 & 0
\end{bmatrix}, \begin{bmatrix}
0 & 0 & 1 & 0\\
0 & 0 & 0 & -1\\
1 & 0 & 0 & 0\\
0 & -1 & 0 & 0
\end{bmatrix} \right\rbrace \hspace{2cm}\\
\Leftrightarrow & A \in \left\langle \begin{bmatrix}
1 & 0 & 0 & 0\\
0 & 1 & 0 & 0\\
0 & 0 & 1 & 0\\
0 & 0 & 0 & -1
\end{bmatrix}, \begin{bmatrix}
0 & 0 & 1 & 0\\
0 & 0 & 0 & -1\\
1 & 0 & 0 & 0\\
0 & 1 & 0 & 0
\end{bmatrix} \right\rangle  \cong \mathbf{D}(4). \hspace{7cm}
\end{eqnarray*}
Therefore $\operatorname{Aut}\left(G_2\right)_{M_1} \cong \mathbf{D}(4)$. This the maximal group of isometric automorphisms on the Lie group $G_2$. For the metric $M_2$, it suffices to verify which of the elements of $\operatorname{Aut}\left(G_2\right)_{M_1}$ preserve the metric $M_2$. We obtain that
\begin{eqnarray*}
& A \in \operatorname{Aut}\left(G_2\right)_{M_2}\hspace{10cm}\\ \Leftrightarrow & A^tM_2A = M_2\hspace{10cm}\\
\Leftrightarrow & A \in \left\lbrace I_4, \begin{bmatrix}
1 & 0 & 0 & 0\\
0 & 1 & 0 & 0\\
0 & 0 & 1 & 0\\
0 & 0 & 0 & -1
\end{bmatrix}, \begin{bmatrix}
1 & 0 & 0 & 0\\
0 & -1 & 0 & 0\\
0 & 0 & 1 & 0\\
0 & 0 & 0 & 1
\end{bmatrix}, \begin{bmatrix}
1 & 0 & 0 & 0\\
0 & -1 & 0 & 0\\
0 & 0 & 1 & 0\\
0 & 0 & 0 & -1
\end{bmatrix} \right\rbrace \cong \left(\mathbb{Z}_2\right)^2.
\end{eqnarray*}
Similarly we can see that
$$
A \in \operatorname{Aut}\left(G_2\right)_{M_3} \Leftrightarrow A^tM_3A = M_3 \Leftrightarrow A \in \left\lbrace I_4, \operatorname{diag}\left\lbrace 1, 1, 1, -1\right\rbrace \right\rbrace \cong \mathbb{Z}_2.
$$
Finally, one can easily verify that: $A \in \operatorname{Aut}\left(G_2\right)_{M_4} \Leftrightarrow A^tM_4A = M_4 \Leftrightarrow A = I_4$.
\end{proof}
\begin{remark}
If we choose the metric $U = \begin{bmatrix}
\alpha & 0 & \gamma & 0\\
0 & 1 & 0 & 0\\
0 & 0 & \mu & 0\\
0 & 0 & 0 & 1
\end{bmatrix} \quad \alpha, \mu > 0, \quad \gamma \neq 0.$\\
Its associated symmetric positive definite matrix is $M_{\gamma} = \begin{bmatrix}
\frac{1}{\alpha^2} & 0 & \frac{-\gamma}{\alpha^2\mu} & 0\\
0 & 1 & 0 & 0\\
\frac{-\gamma}{\alpha^2\mu} & 0 & \frac{\gamma^2}{\alpha^2\mu^2} + \frac{1}{\mu^2} & 0\\
0 & 0 & 0 & 1
\end{bmatrix}.$\\
Then the group of isometric automorphisms of $M_{\gamma}$ is exactly the one of $M_1$ or $M_2$. In fact we have
$$\operatorname{Aut}\left(G_2\right)_{M_{\gamma}} \cong \left\lbrace\begin{array}{lll}
\mathbf{D}(4) \quad \text{if}\; \frac{1}{\alpha^2} = \frac{1}{\mu^2} + \frac{\gamma^2}{\alpha^2\mu^2} \\
\left(\mathbb{Z}_2\right)^2 \quad \text{if}\; \frac{1}{\alpha^2} \neq \frac{1}{\mu^2} + \frac{\gamma^2}{\alpha^2\mu^2}
\end{array}\right.
$$
If we choose the metric $U = \begin{bmatrix}
\alpha & 0 & 0 & \nu\\
0 & 1 & 0 & 0\\
0 & 0 & \mu & 0\\
0 & 0 & 0 & 1
\end{bmatrix} \quad \alpha, \mu > 0, \quad \nu \neq 0.$\\
Its associated symmetric positive definite matrix is $M_{\nu} = \begin{bmatrix}
\frac{1}{\alpha^2} & 0 & 0 & \frac{-\nu}{\alpha^2}\\
0 & 1 & 0 & 0\\
0 & 0 & \frac{1}{\mu^2} & 0\\
\frac{-\nu}{\alpha^2} & 0 & 0 & \frac{\nu^2}{\alpha^2} + 1
\end{bmatrix}.$\\
Then we have $A \in \operatorname{Aut}\left(G_2\right)_{M_{\nu}} \Leftrightarrow A^tM_{\nu}A = M_{\nu} \Leftrightarrow A \in \left\lbrace I_4, \operatorname{diag}\left\lbrace 1, -1, 1, 1\right\rbrace \right\rbrace \cong \mathbb{Z}_2$.\\
In fact, for any other left-invariant Riemannian metric on $G_2$, the group of isometric automorphisms is isomorphic to one of the groups listed in Theorem \ref{possiblegroups}.
\end{remark}
\begin{corollary}
The isometry group of the Lie group $G_2$ is given by
\begin{eqnarray*}
\operatorname{Isom}\left(G_2, M_1\right)  &\cong& G_2 \rtimes \mathbf{D}(4)\\
\operatorname{Isom}\left(G_2, M_2\right)  &\cong& G_2 \rtimes \left(\mathbb{Z}_2\right)^2\\
\operatorname{Isom}\left(G_2, M_3\right)  &\cong& G_2 \rtimes \mathbb{Z}_2\\
\operatorname{Isom}\left(G_2, M_4\right)  &\cong& G_2.
\end{eqnarray*}
\end{corollary}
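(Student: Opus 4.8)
The plan is to apply the structural theorem of Section~2 --- that every simply connected solvable Lie group $G$ of type $(R)$ satisfies $\operatorname{Isom}(G, g) \cong G \rtimes \operatorname{Aut}(G)_g$ for each left-invariant Riemannian metric $g$ --- and then read off the four cases from the isometric automorphism groups already determined in Theorem~\ref{possiblegroups}. In this way the corollary is merely a specialization of that theorem to $G = G_2$.

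First I would verify that $G_2$ satisfies the three hypotheses. It is simply connected by construction, being the direct product $\widetilde{\operatorname{Aff}(\mathbb{R})} \times \widetilde{\operatorname{Aff}(\mathbb{R})}$ of two simply connected factors. Its Lie algebra $\operatorname{2A}_2 = \mathfrak{aff}(\mathbb{R}) \oplus \mathfrak{aff}(\mathbb{R})$ is solvable, as recorded in the remark following Table~1. Finally $\operatorname{2A}_2$ is of type $(R)$: it is not among the four exceptional algebras $\operatorname{A}_{3,7}^{\alpha} \oplus \operatorname{A}_1$, $\operatorname{A}_{4,6}^{\alpha,\beta}$, $\operatorname{A}_{4,11}^{\alpha}$, $\operatorname{A}_{4,12}$ isolated in the proposition listing the non-type-$(R)$ cases, so every $\operatorname{ad}$ operator on $\operatorname{2A}_2$ has only real eigenvalues.

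With the hypotheses confirmed, I would apply the theorem to each representative metric $M_i$, yielding $\operatorname{Isom}(G_2, M_i) \cong G_2 \rtimes \operatorname{Aut}(G_2)_{M_i}$. Substituting the four groups computed in Theorem~\ref{possiblegroups}, namely $\operatorname{Aut}(G_2)_{M_1} \cong \mathbf{D}(4)$, $\operatorname{Aut}(G_2)_{M_2} \cong (\mathbb{Z}_2)^2$, $\operatorname{Aut}(G_2)_{M_3} \cong \mathbb{Z}_2$, and $\operatorname{Aut}(G_2)_{M_4} = \{I_4\}$, produces the four displayed isomorphisms, the last reducing to $\operatorname{Isom}(G_2, M_4) \cong G_2$ because the isotropy at the identity is trivial.

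Since the computational heart of the argument --- solving $A^t M_i A = M_i$ over the automorphism group of $\operatorname{2A}_2$ --- is already carried out in Theorem~\ref{possiblegroups}, no substantive obstacle remains. The single conceptual point to keep in mind is that the semidirect decomposition rests on the type $(R)$ condition: through Lemma~\ref{gordon} and Lemma~\ref{helgason} it forces $\operatorname{Isom}(G_2, g)_e = \operatorname{Aut}(G_2)_g$, so that the isotropy at the identity consists of exactly the isometric automorphisms and no extra isometries can arise. This is the only place where type $(R)$ is essential, and it is precisely what the structural theorem supplies.
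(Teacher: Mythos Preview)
Your proposal is correct and matches the paper's approach: the corollary is stated without proof precisely because it follows immediately from the structural theorem of Section~2 (valid since $\operatorname{2A}_2$ is solvable of type $(R)$) together with the isometric automorphism groups computed in Theorem~\ref{possiblegroups}. Your explicit verification of the hypotheses and invocation of those results is exactly what the paper intends.
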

\subsection{The isometry group of the Lie group $G_{3.2} \times \mathbb{R}$}
By theorem 3.3. in \cite{aitbenhaddou2025classification}, any left-invariant Riemannian metric on $\operatorname{A}_{3, 2} \oplus \operatorname{A}_1 = \operatorname{Lie}(G_{3.2} \times \mathbb{R})$ is equivalent, up to automorphism, to the following metric
$$U = \begin{bmatrix}
\alpha & 0 & 0 & \gamma\\
0 & 1 & 0 & \lambda\\
0 & 0 & \beta & \mu\\
0 & 0 & 0 & 1
\end{bmatrix} \quad \alpha, \beta > 0, \quad \lambda, \mu \geq 0, \quad \gamma \in \mathbb{R}.$$
We distinguish three cases associated with this metric
\begin{enumerate}
\item If $\lambda = \mu = \gamma = 0$, then in accordance with the bijection $\varphi$, the symmetric positive definite matrix associated to $U$ is $M_1 = (U^{-1})^t(U^{-1}) = \operatorname{diag}\left\lbrace \frac{1}{\alpha^2}, 1, \frac{1}{\beta^2}, 1\right\rbrace$.
\item If $\gamma \neq 0$ and $\lambda = \mu = 0$ in $U$, then by the bijection $\varphi$, the symmetric positive definite matrix associated to $U$ is
$$M_2 = (U^{-1})^t(U^{-1}) = \begin{bmatrix}
\frac{1}{\alpha^2} & 0 & 0 & \frac{-\gamma}{\alpha^2}\\
0 & 1 & 0 & 0\\
0 & 0 & \frac{1}{\beta^2} & 0\\
\frac{-\gamma}{\alpha^2} & 0 & 0 & \frac{\gamma^2}{\alpha^2} + 1
\end{bmatrix}.$$
\item If $\gamma \neq 0$, $\mu > 0$ and $\lambda = 0$ in $U$, then according to the bijection $\varphi$, the symmetric positive definite matrix associated to $U$ is 
$$M_3 = (U^{-1})^t(U^{-1}) = \begin{bmatrix}
\frac{1}{\alpha^2} & 0 & 0 & \frac{-\gamma}{\alpha^2}\\
0 & 1 & 0 & 0\\
0 & 0 & \frac{1}{\beta^2} & \frac{-\mu}{\beta^2}\\
\frac{-\gamma}{\alpha^2} & 0 & \frac{-\mu}{\beta^2} & \frac{\gamma^2}{\alpha^2} + \frac{\mu^2}{\beta^2} + 1
\end{bmatrix}.$$
\end{enumerate}
\begin{theorem} 
The possible groups of isometric automorphisms of $G_{3.2} \times \mathbb{R}$ are
\begin{eqnarray*}
\operatorname{Aut}\left(G_{3.2} \times \mathbb{R}\right)_{M_1}  &\cong& \left(\mathbb{Z}_2\right)^2\\
\operatorname{Aut}\left(G_{3.2} \times \mathbb{R}\right)_{M_2}  &\cong& \mathbb{Z}_2\\
\operatorname{Aut}\left(G_{3.2} \times \mathbb{R}\right)_{M_3}  &=& \{I_4\}.
\end{eqnarray*}
\end{theorem}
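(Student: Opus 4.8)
The plan is to follow the template of the previous subsections: first pin down $\operatorname{Aut}(G_{3.2}\times\mathbb{R})\cong\operatorname{Aut}(\operatorname{A}_{3,2}\oplus\operatorname{A}_1)$, then intersect with each metric's stabilizer by solving the defining equation $A^tM_iA=M_i$ of Remark \ref{orthogonal}, and finally identify the resulting finite groups. Using that the derived algebra $\langle e_1,e_2\rangle$, its $\operatorname{ad}e_3$-eigenline $\langle e_1\rangle$, the centre $\langle e_4\rangle$, and the nilradical $\langle e_1,e_2,e_4\rangle$ are all characteristic, a direct computation with the brackets $[e_1,e_3]=e_1$ and $[e_2,e_3]=e_1+e_2$ shows that every automorphism has the form
$$A = \begin{bmatrix} a & b & p & 0 \\ 0 & a & q & 0 \\ 0 & 0 & 1 & 0 \\ 0 & 0 & r & d \end{bmatrix}, \qquad a,d \neq 0,$$
with the $(3,3)$ entry normalised to $1$ and the two diagonal entries of the $\langle e_1,e_2\rangle$-block forced equal (compare \cite{christodoulakis2003automorphisms}).

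For $M_1=\operatorname{diag}(1/\alpha^2,1,1/\beta^2,1)$ and for $M_2$ the argument is clean because the diagonal of $A^tM_iA=M_i$ consists of sums of squares. The $(1,1)$ and $(4,4)$ entries give $a^2=d^2=1$; the $(2,2)$ entry gives $b^2/\alpha^2+a^2=1$, hence $b=0$; and for $M_1$ the $(3,3)$ entry reads $p^2/\alpha^2+q^2+r^2=0$, forcing $p=q=r=0$. Thus only $A=\operatorname{diag}(a,a,1,d)$ with $a,d\in\{\pm1\}$ survive, and since $M_1$ is diagonal all four preserve it, giving $(\mathbb{Z}_2)^2$. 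For $M_2$ (with $\gamma\neq0$) the $(3,3)$ entry still factors as $(p-\gamma r)^2/\alpha^2+q^2+r^2=0$, so again $b=p=q=r=0$, while the off-diagonal $(1,4)$ entry $-ad\gamma/\alpha^2=-\gamma/\alpha^2$ forces $ad=1$, i.e. $a=d$; this leaves $\{I_4,\operatorname{diag}(-1,-1,1,-1)\}\cong\mathbb{Z}_2$.

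The delicate case, and the one I expect to be the main obstacle, is $M_3$, where the extra coupling $-\mu/\beta^2$ in the $e_3$–$e_4$ block breaks the pure sum-of-squares structure. Here the entries $(2,2),(2,3),(1,3)$ first give $b=q=0$ and $p=\gamma r$, but the $(3,3)$ equation then reduces only to $r\big[(\mu^2+\beta^2)r-2\mu\big]=0$, which besides $r=0$ has the nonzero root $r=2\mu/(\mu^2+\beta^2)$. Discarding $r\neq0$ out of hand yields $\{I_4\}$, but this root must be tested against the remaining coupled equations rather than dropped: the $(1,4)$ entry forces $a=d$, and then the $(3,4)$ entry gives $d=1$ (whence $r=0$) in the branch $a=d=1$, while in the branch $a=d=-1$ the $(3,4)$ equation is satisfied precisely by $r=2\mu/(\mu^2+\beta^2)$.

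I would therefore single out this coupled $(3,3)$–$(1,4)$–$(3,4)$ analysis for $M_3$ as the crux. A concrete check (e.g. $\alpha=\beta=\gamma=\mu=1$) shows that the branch $a=d=-1$ produces the order-two map
$$A = \begin{bmatrix} -1 & 0 & \gamma r & 0 \\ 0 & -1 & 0 & 0 \\ 0 & 0 & 1 & 0 \\ 0 & 0 & r & -1 \end{bmatrix}, \qquad r = \frac{2\mu}{\mu^2+\beta^2},$$
which is a genuine Lie-algebra automorphism and satisfies $A^tM_3A=M_3$. So a careful computation appears to give $\operatorname{Aut}(G_{3.2}\times\mathbb{R})_{M_3}\cong\mathbb{Z}_2$ generated by this involution, and reconciling this with the stated $\{I_4\}$ is exactly the step that needs the most care; the obstacle is not conceptual but lies in not prematurely setting $r=0$ when solving the $M_3$ system.
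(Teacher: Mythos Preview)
Your treatment of $M_1$ and $M_2$ follows the same direct template as the paper: write down the general automorphism, impose $A^tM_iA=M_i$, and read off the constraints. The results agree with the paper's.

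For $M_3$ you have uncovered a genuine error in the paper's statement and proof. The paper's argument for $M_3$ is the one-line assertion ``one can easily verify that $A^tM_3A=M_3 \Leftrightarrow A=I_4$'', implicitly relying on the shortcut already used for $M_2$: only elements of the diagonal group $\operatorname{Aut}(G_{3.2}\times\mathbb{R})_{M_1}$ are tested against $M_3$. That shortcut is not justified --- there is no reason $\operatorname{Aut}(G)_{M_3}\subset\operatorname{Aut}(G)_{M_1}$ --- and your computation shows precisely where it fails. Solving the full system $A^tM_3A=M_3$ over the automorphism group yields $b=q=0$, $p=\gamma r$, $a=d=\pm1$, and then the $(3,4)$-entry gives $d\big((\mu^2+\beta^2)r-\mu\big)=-\mu$; the branch $a=d=-1$ forces $r=2\mu/(\mu^2+\beta^2)$, which also satisfies the $(3,3)$-equation. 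The resulting matrix
\[
A=\begin{bmatrix}-1&0&\gamma r&0\\0&-1&0&0\\0&0&1&0\\0&0&r&-1\end{bmatrix},\qquad r=\frac{2\mu}{\mu^2+\beta^2},
\]
is a Lie-algebra automorphism (direct check on the brackets), satisfies $A^2=I_4$, and preserves $M_3$; your numerical check $\alpha=\beta=\gamma=\mu=1$ confirms this. So $\operatorname{Aut}(G_{3.2}\times\mathbb{R})_{M_3}\cong\mathbb{Z}_2$, not $\{I_4\}$. There is nothing to reconcile: your analysis is correct and the paper's claim for $M_3$ is wrong, stemming from the unjustified restriction to diagonal automorphisms.
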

\begin{proof}
The automorphism group of $\operatorname{A}_{3, 2} \oplus \operatorname{A}_1 = \operatorname{Lie}(G_{3.2} \times \mathbb{R})$ consists of elements of the form \cite{christodoulakis2003automorphisms}
$$\begin{bmatrix}
a_1 & a_2 & a_3 & 0\\
0 & a_1 & a_7 & 0\\
0 & 0 & 1 & 0\\
0 & 0 & a_{15} & a_{16}
\end{bmatrix}.$$
Let $A$ be an automorphism of $\operatorname{A}_{3, 2} \oplus \operatorname{A}_1 = \operatorname{Lie}(G_{3.2} \times \mathbb{R})$ of the above form , then
\begin{eqnarray*}
& A \in \operatorname{Aut}\left(G_{3.2} \times \mathbb{R}\right)_{M_1}\hspace{10cm}\\ \Leftrightarrow & A^tM_1A = M_1\hspace{11cm}\\
\Leftrightarrow & a_2 = a_3 = a_7 = a_{15} = 0, \quad a_1 = \pm 1, \quad a_{16} = \pm 1 \hspace{5cm}\\
\Leftrightarrow & A \in \left\lbrace I_4, \begin{bmatrix}
1 & 0 & 0 & 0\\
0 & 1 & 0 & 0\\
0 & 0 & 1 & 0\\
0 & 0 & 0 & -1
\end{bmatrix}, \begin{bmatrix}
-1 & 0 & 0 & 0\\
0 & -1 & 0 & 0\\
0 & 0 & 1 & 0\\
0 & 0 & 0 & 1
\end{bmatrix}, \begin{bmatrix}
-1 & 0 & 0 & 0\\
0 & -1 & 0 & 0\\
0 & 0 & 1 & 0\\
0 & 0 & 0 & -1
\end{bmatrix} \right\rbrace \cong \left(\mathbb{Z}_2\right)^2. \hspace{0.2cm}
\end{eqnarray*}
This is the maximal group of isometric automorphisms of $G_{3.2} \times \mathbb{R}$.\\
For the metric $M_2$, it suffices to verify which of the elements of $\operatorname{Aut}\left(G_{3.2} \times \mathbb{R}\right)_{M_1}$ preserve the metric $M_2$. We obtain that
$$
A \in \operatorname{Aut}\left(G_{3.2} \times \mathbb{R}\right)_{M_2} \Leftrightarrow A^tM_2A = M_2 \Leftrightarrow A \in \left\lbrace I_4, \operatorname{diag}\left\lbrace -1, -1, 1, -1\right\rbrace \right\rbrace \cong \mathbb{Z}_2.
$$
Finally, one can easily verify that: $A \in \operatorname{Aut}\left(G_{3.2} \times \mathbb{R}\right)_{M_3} \Leftrightarrow A^tM_3A = M_3 \Leftrightarrow A = I_4$.
\end{proof}
\begin{corollary}
The isometry group of the Lie group $G_{3.2} \times \mathbb{R}$ is given by
\begin{eqnarray*}
\operatorname{Isom}\left(G_{3.2} \times \mathbb{R}, M_1\right)  &\cong& \left(G_{3.2} \times \mathbb{R}\right) \rtimes \left(\mathbb{Z}_2\right)^2\\
\operatorname{Isom}\left(G_{3.2} \times \mathbb{R}, M_2\right)  &\cong& \left(G_{3.2} \times \mathbb{R}\right) \rtimes \mathbb{Z}_2\\
\operatorname{Isom}\left(G_{3.2} \times \mathbb{R}, M_3\right)  &\cong& G_{3.2} \times \mathbb{R}.\\
\end{eqnarray*}
\end{corollary}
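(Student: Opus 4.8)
The plan is to reuse the template of the two preceding subsections: write down $\operatorname{Aut}(\operatorname{A}_{3,2}\oplus\operatorname{A}_1)$ explicitly, use the orthogonality constraint of Remark \ref{orthogonal} to reduce it to a finite family of candidates, and then solve the matrix equation $A^{t}M_iA=M_i$ for each representative metric, starting with the most symmetric one, $M_1$.

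First I would recall from \cite{christodoulakis2003automorphisms} that every automorphism of $\operatorname{A}_{3,2}\oplus\operatorname{A}_1$ has the matrix form
$$A=\begin{bmatrix} a_1 & a_2 & a_3 & 0\\ 0 & a_1 & a_7 & 0\\ 0 & 0 & 1 & 0\\ 0 & 0 & a_{15} & a_{16}\end{bmatrix},$$
whose characteristic polynomial factors as $(\lambda-a_1)^2(\lambda-1)(\lambda-a_{16})$. By Remark \ref{orthogonal} any isometric automorphism is $g$-orthogonal, hence conjugate into $\operatorname{O}(4)$; in particular it is diagonalizable with all eigenvalues of modulus one. As the eigenvalues $a_1,1,a_{16}$ are real, this forces $a_1=\pm1$ and $a_{16}=\pm1$, makes $A$ an involution, and (by diagonalizability of the repeated eigenvalue $a_1$, whose $2\times 2$ block would otherwise be a Jordan block) forces $a_2=0$.

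For $M_1=\operatorname{diag}\{1/\alpha^2,1,1/\beta^2,1\}$ I would substitute this reduced shape into $A^{t}M_1A=M_1$ and read off the equations coordinate by coordinate. Since $M_1$ is diagonal while the third row of $A$ is pinned to $(0,0,1,0)$, the off-diagonal terms force $a_3=a_7=a_{15}=0$, leaving precisely the four diagonal sign matrices indexed by $a_1,a_{16}\in\{\pm1\}$. This gives $\operatorname{Aut}(G_{3.2}\times\mathbb{R})_{M_1}\cong(\mathbb{Z}_2)^2$, which is the maximal isometric automorphism group.

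For the less symmetric metrics $M_2$ and $M_3$ I would solve $A^{t}M_iA=M_i$ in the same way. The new off-diagonal entry $-\gamma/\alpha^2$ of $M_2$ couples coordinates $1$ and $4$, so only the sign patterns acting identically on both survive, and I expect to be left with $\{I_4,\operatorname{diag}\{-1,-1,1,-1\}\}\cong\mathbb{Z}_2$; in $M_3$ the additional coupling $-\mu/\beta^2$ between coordinates $3$ and $4$ destroys the remaining nontrivial involution, leaving $\{I_4\}$. The step I expect to carry the real weight is the reduction of the continuous parameters $a_2,a_3,a_7,a_{15}$ to a finite list of diagonal sign matrices: orthogonality alone (Remark \ref{orthogonal}) still permits one-parameter families of involutions, and it is the metric equation $A^{t}M_iA=M_i$ that eliminates the residual off-diagonal freedom. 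Once this reduction is in place, the three per-metric determinations are routine finite verifications.
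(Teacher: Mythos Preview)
Your proposal is correct and follows essentially the paper's approach: compute $\operatorname{Aut}(G_{3.2}\times\mathbb{R})_{M_i}$ by imposing $A^{t}M_iA=M_i$ on the general automorphism, and then pass to $\operatorname{Isom}$ via the type-$(R)$ theorem (which you should state explicitly, since the corollary is about the full isometry group). Your preliminary eigenvalue/diagonalizability reduction to $a_1,a_{16}\in\{\pm1\}$, $a_2=0$ is a mild refinement---the paper instead solves $A^{t}M_1A=M_1$ directly to obtain the four diagonal sign matrices and then simply checks which of these four survive for $M_2$ and $M_3$.
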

\subsection{The isometry group of the Lie group $G_{3.3} \times \mathbb{R}$}
By theorem 3.4. in \cite{aitbenhaddou2025classification}, any left-invariant Riemannian metric on $\operatorname{A}_{3, 3} \oplus \operatorname{A}_1 = \operatorname{Lie}(G_{3.3} \times \mathbb{R})$ is equivalent, up to automorphism, to the following metric
$$U = \begin{bmatrix}
1 & 0 & 0 & \beta\\
0 & 1 & 0 & 0\\
0 & 0 & \alpha & \gamma\\
0 & 0 & 0 & 1
\end{bmatrix} \quad \alpha > 0, \quad \beta, \gamma \geq 0.$$
We distinguish four cases associated with this metric
\begin{enumerate}
\item If $\beta = \gamma = 0$, then by the bijection $\varphi$, the symmetric positive definite matrix associated to $U$ is $M_1 = (U^{-1})^t(U^{-1}) = \operatorname{diag}\left\lbrace 1, 1, \frac{1}{\alpha^2}, 1\right\rbrace$.
\item If $\beta = 0$ and $\gamma > 0$, then based on the bijection $\varphi$, the symmetric positive definite matrix associated to $U$ is
$$M_2 = (U^{-1})^t(U^{-1}) = \begin{bmatrix}
1 & 0 & 0 & 0\\
0 & 1 & 0 & 0\\
0 & 0 & \frac{1}{\alpha^2} & \frac{-\gamma}{\alpha^2}\\
0 & 0 & \frac{-\gamma}{\alpha^2} & 1 + \frac{\gamma^2}{\alpha^2}
\end{bmatrix}.$$
\item If $\beta > 0$ and $\gamma = 0$, then according to the map $\varphi$, the symmetric positive definite matrix associated to $U$ is
$$M_3 = (U^{-1})^t(U^{-1}) = \begin{bmatrix}
1 & 0 & 0 & -\beta\\
0 & 1 & 0 & 0\\
0 & 0 & \frac{1}{\alpha^2} & 0\\
-\beta & 0 & 0 & \beta^2 + 1
\end{bmatrix}.$$
\item If $\beta, \gamma > 0$, then according to the bijection $\varphi$, the symmetric positive definite matrix associated to $U$ is 
$$M_4 = (U^{-1})^t(U^{-1}) = \begin{bmatrix}
1 & 0 & 0 & -\beta\\
0 & 1 & 0 & 0\\
0 & 0 & \frac{1}{\alpha^2} & \frac{-\gamma}{\alpha^2}\\
-\beta & 0 & \frac{-\gamma}{\alpha^2} & \beta^2 + 1 + \frac{\gamma^2}{\alpha^2}
\end{bmatrix}.$$
\end{enumerate}
\begin{theorem} 
The possible groups of isometric automorphisms of $G_{3.3} \times \mathbb{R}$ are
\begin{eqnarray*}
\operatorname{Aut}\left(G_{3.3} \times \mathbb{R}\right)_{M_1}  &=& \left(\operatorname{diag}\left\lbrace \operatorname{O}(2), 1, \pm1\right\rbrace \right)\\
\operatorname{Aut}\left(G_{3.3} \times \mathbb{R}\right)_{M_2}  &\cong& \operatorname{O}(2)\\
\operatorname{Aut}\left(G_{3.3} \times \mathbb{R}\right)_{M_3}  &\cong& \left(\mathbb{Z}_2\right)^2\\
\operatorname{Aut}\left(G_{3.3} \times \mathbb{R}\right)_{M_4}  &\cong& \mathbb{Z}_2.
\end{eqnarray*}
\end{theorem}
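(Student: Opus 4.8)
The plan is to follow the same strategy already used for the preceding groups: first pin down the full automorphism group of $\operatorname{A}_{3,3}\oplus\operatorname{A}_1$, then cut it down with the orthogonality constraint of Remark \ref{orthogonal}, and finally impose $A^tM_iA=M_i$ for each representative metric. I would begin by recording the shape of an automorphism. Since the derived algebra of $\operatorname{A}_{3,3}\oplus\operatorname{A}_1$ is $\langle e_1,e_2\rangle$ and its center is $\langle e_4\rangle$, every automorphism preserves both; a quick inspection of the relations $[e_1,e_3]=e_1,\ [e_2,e_3]=e_2$ shows in addition that the $e_3$-component of $\theta_*(e_3)$ must equal $1$. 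Hence (as in \cite{christodoulakis2003automorphisms}) the automorphisms are exactly
\[
A=\begin{bmatrix} a_1 & a_2 & a_3 & 0\\ a_5 & a_6 & a_7 & 0\\ 0 & 0 & 1 & 0\\ 0 & 0 & a_{15} & a_{16}\end{bmatrix},\qquad a_1a_6-a_2a_5\neq 0,\quad a_{16}\neq 0.
\]

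Next I would apply Remark \ref{orthogonal}, i.e. $\operatorname{Aut}(G)_g\subset\operatorname{O}(4)$. Orthonormality of the third column forces $a_3=a_7=a_{15}=0$, of the fourth column forces $a_{16}=\pm1$, and of the first two columns forces the block $\left[\begin{smallmatrix}a_1 & a_2\\ a_5 & a_6\end{smallmatrix}\right]$ to lie in $\operatorname{O}(2)$. Thus every isometric automorphism has the form $\operatorname{diag}\{O,1,\varepsilon\}$ with $O\in\operatorname{O}(2)$ and $\varepsilon=\pm1$, and it only remains to test $A^tM_iA=M_i$ inside this family. For $M_1=\operatorname{diag}\{1,1,\tfrac{1}{\alpha^2},1\}$ the $\{e_1,e_2\}$-block meets the identity while the $e_3,e_4$ directions are untouched up to sign, so the whole family preserves $M_1$ and I obtain the maximal group $\operatorname{diag}\{\operatorname{O}(2),1,\pm1\}$.

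For the three remaining metrics I would exploit the off-diagonal entries one at a time. In $M_2$ the only new term is the $e_3$–$e_4$ entry $-\tfrac{\gamma}{\alpha^2}$; conjugating the lower block $\operatorname{diag}\{1,\varepsilon\}$ multiplies this entry by $\varepsilon$, so $\varepsilon=1$ is forced while $O$ stays free, giving $\operatorname{O}(2)$. In $M_3$ the new coupling is the $e_1$–$e_4$ entry $-\beta$; the $(1,4)$ equation reads $-\beta\varepsilon a_1=-\beta$, so $\varepsilon a_1=1$, which (since the first column of $O$ is a unit vector) forces $a_1=\varepsilon=\pm1$, $a_5=a_2=0$, $a_6=\pm1$, and the two independent signs $a_1,a_6$ produce $(\mathbb{Z}_2)^2$. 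Finally $M_4$ carries both couplings, so I simply intersect the constraints: the $e_1$–$e_4$ term ties $a_1=\varepsilon$ and the $e_3$–$e_4$ term forces $\varepsilon=1$, leaving only the free sign $a_6$, i.e. $\mathbb{Z}_2$.

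The routine portion is the bracket check fixing the shape of $A$ together with the verification that the listed matrices really solve $A^tM_iA=M_i$. The step that needs care is the bookkeeping of the off-diagonal equations for $M_3$ and $M_4$: there the continuous rotation $O$ in the $\{e_1,e_2\}$-plane is coupled to the sign $\varepsilon$ on $e_4$ through the $(1,4)$ entry, and one must argue that a unit-norm first column together with $\varepsilon a_1=1$ collapses the whole $\operatorname{O}(2)$ freedom down to finitely many sign choices. Once that reduction is in hand, identifying the abstract groups $\operatorname{O}(2)$, $(\mathbb{Z}_2)^2$, and $\mathbb{Z}_2$ is immediate.
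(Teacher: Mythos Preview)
Your overall strategy matches the paper's, but the step where you ``apply Remark~\ref{orthogonal}, i.e.\ $\operatorname{Aut}(G)_g\subset\operatorname{O}(4)$'' and then read off $a_3=a_7=a_{15}=0$, $a_{16}=\pm1$, $O\in\operatorname{O}(2)$ from \emph{standard} orthonormality is not justified. The remark only gives $\operatorname{Aut}(G)_g\subset\operatorname{O}(\mathfrak g,g)$, which is the defining condition $A^t M_i A=M_i$; it does not place $A$ in the standard $\operatorname{O}(4)$ unless $M_i$ is a scalar multiple of $I_4$. For $M_1$ your reduction happens to coincide with $A^tM_1A=M_1$ (the only non-unit entry of $M_1$ sits at $(3,3)$ and every automorphism has third row $(0,0,1,0)$), and for $M_3$ the $(3,3)$-equation of $A^tM_3A=M_3$ genuinely forces $a_{15}=0$; so your conclusions for $M_1$ and $M_3$ stand.

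The gap bites at $M_2$ and $M_4$. Solving $A^tM_2A=M_2$ in the full automorphism family yields, besides the branch $a_{16}=1,\ a_{15}=0$, a second branch
\[
a_{16}=-1,\qquad a_{15}=\frac{2\gamma}{\alpha^2+\gamma^2},\qquad a_3=a_7=0,\qquad O\in\operatorname{O}(2)\ \text{free};
\]
one checks directly that $B_0=\left[\begin{smallmatrix}1&0\\2\gamma/(\alpha^2+\gamma^2)&-1\end{smallmatrix}\right]$ preserves the lower $2\times2$ block of $M_2$. An analogous $a_{16}=-1$ branch survives for $M_4$, now with $a_3=\beta a_{15}\neq0$. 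Because you restrict to $\operatorname{diag}\{O,1,\varepsilon\}$ \emph{before} testing $A^tM_iA=M_i$, these elements are discarded without being examined. The paper's direct computation misses the same branch, so your argument reproduces its conclusions, but the passage through the standard $\operatorname{O}(4)$ is where the reasoning actually breaks down.
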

\begin{proof}
The automorphism group of $\operatorname{A}_{3, 3} \oplus \operatorname{A}_1 = \operatorname{Lie}(G_{3.3} \times \mathbb{R})$ consists of elements of the form \cite{christodoulakis2003automorphisms}
$$\begin{bmatrix}
a_1 & a_2 & a_3 & 0\\
a_5 & a_6 & a_7 & 0\\
0 & 0 & 1 & 0\\
0 & 0 & a_{15} & a_{16}
\end{bmatrix}.$$
Let $A$ be an automorphism of the above form; then
\begin{eqnarray*}
A \in \operatorname{O}(4) \Leftrightarrow A \in \operatorname{Aut}\left(G_{3.3} \times \mathbb{R}\right)_{M_1} &\Leftrightarrow& A^tM_1A = M_1\\
&\Leftrightarrow& \begin{bmatrix}
a_1 & a_2\\
a_5 & a_6
\end{bmatrix} \in \operatorname{O}(2), a_3 = a_7 = a_{15} = 0, a_{16} = \pm1.
\end{eqnarray*}
Hence $\operatorname{Aut}\left(G_{3.3} \times \mathbb{R}\right)_{M_1}  = \left(\operatorname{diag}\left\lbrace \operatorname{O}(2), 1, \pm1\right\rbrace \right).$ This is the maximal group of isometric autmorphisms of $G_{3.3} \times \mathbb{R}$. For the metric $M_2$, one can see that
\begin{eqnarray*}
A \in \operatorname{Aut}\left(G_{3.3} \times \mathbb{R}\right)_{M_2} &\Leftrightarrow& A^tM_2A = M_2\\
&\Leftrightarrow& \begin{bmatrix}
a_1 & a_2\\
a_5 & a_6
\end{bmatrix} \in \operatorname{O}(2), a_3 = a_7 = a_{15} = 0, a_{16} = 1.
\end{eqnarray*}
Thus $\operatorname{Aut}\left(G_{3.3} \times \mathbb{R}\right)_{M_2}  = \left(\operatorname{diag}\left\lbrace \operatorname{O}(2), 1, 1\right\rbrace \right) \cong \operatorname{O}(2).$\\
For the metric $M_3$, it suffices to verify which of the elements of $\operatorname{Aut}\left(G_{3.3} \times \mathbb{R}\right)_{M_1}$ preserve $M_3$. We consider an automorphism of the form $A = \begin{bmatrix}
a_1 & a_2 & 0 & 0\\
a_5 & a_6 & 0 & 0\\
0 & 0 & 1 & 0\\
0 & 0 & 0 & a_{16}
\end{bmatrix}$. Then
\begin{eqnarray*}
A \in \operatorname{Aut}\left(G_{3.3} \times \mathbb{R}\right)_{M_3} &\Leftrightarrow& A^tM_3A = M_3\\
&\Leftrightarrow& a_1 = a_{16} = \pm1, a_2 = 0, a_6 = \pm1, a_5 = 0
\end{eqnarray*}
Hence
\begin{eqnarray*}
\operatorname{Aut}\left(G_{3.3} \times \mathbb{R}\right)_{M_3} &=& \left\lbrace I_4, \begin{bmatrix}
1 & 0 & 0 & 0\\
0 & 1 & 0 & 0\\
0 & 0 & 1 & 0\\
0 & 0 & 0 & -1
\end{bmatrix}, \begin{bmatrix}
-1 & 0 & 0 & 0\\
0 & -1 & 0 & 0\\
0 & 0 & 1 & 0\\
0 & 0 & 0 & 1
\end{bmatrix}, \begin{bmatrix}
-1 & 0 & 0 & 0\\
0 & -1 & 0 & 0\\
0 & 0 & 1 & 0\\
0 & 0 & 0 & -1
\end{bmatrix} \right\rbrace\\
&\cong& (\mathbb{Z}_2)^2.
\end{eqnarray*}
For the metric $M_4$, we obtain that
\begin{eqnarray*}
A \in \operatorname{Aut}\left(G_{3.3} \times \mathbb{R}\right)_{M_4} &\Leftrightarrow& A^tM_4A = M_4\\
&\Leftrightarrow& a_{16} = a_1 = 1, a_2 = 0, a_6 = \pm1, a_5 = 0
\end{eqnarray*}
Thus, $\operatorname{Aut}\left(G_{3.3} \times \mathbb{R}\right)_{M_4} = \left\lbrace I_4, \operatorname{diag}\left\lbrace 1, -1, 1, 1\right\rbrace \right\rbrace \cong \mathbb{Z}_2$.
\end{proof}
\begin{corollary}
The isometry group of $G_{3.3} \times \mathbb{R}$ is given by
\begin{eqnarray*}
\operatorname{Isom}\left(G_{3.3} \times \mathbb{R}, M_{1}\right)  &\cong& \left(G_{3.3} \times \mathbb{R}\right) \rtimes \left(\operatorname{diag}\left\lbrace \operatorname{O}(2), 1, \pm1\right\rbrace \right)\\
\operatorname{Isom}\left(G_{3.3} \times \mathbb{R}, M_2\right)  &\cong& \left(G_{3.3} \times \mathbb{R}\right) \rtimes \operatorname{O}(2)\\
\operatorname{Isom}\left(G_{3.3} \times \mathbb{R}, M_3\right)  &\cong& \left(G_{3.3} \times \mathbb{R}\right) \rtimes \left(\mathbb{Z}_2\right)^2\\
\operatorname{Isom}\left(G_{3.3} \times \mathbb{R}, M_4\right)  &\cong& \left(G_{3.3} \times \mathbb{R}\right) \rtimes \mathbb{Z}_2.
\end{eqnarray*}
\end{corollary}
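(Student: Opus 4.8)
The plan is to obtain this corollary as an immediate consequence of the structural theorem of Section~2, which asserts that for a simply connected solvable Lie group $G$ of type $(R)$ one has $\operatorname{Isom}(G, g) \cong G \rtimes \operatorname{Aut}(G)_g$ for every left-invariant Riemannian metric $g$. First I would verify that $G_{3.3} \times \mathbb{R}$ meets the three hypotheses of that theorem. Simple connectedness holds by construction, since $G_{3.3} \times \mathbb{R}$ was defined in Section~3 as the simply connected Lie group associated with $\operatorname{A}_{3,3} \oplus \operatorname{A}_1$. Solvability is immediate because every Lie algebra in Table~1 is solvable. Finally, type $(R)$ follows from the Proposition of Section~2: the Lie algebra $\operatorname{A}_{3,3} \oplus \operatorname{A}_1$ is not one of the four exceptions listed there, so each of its adjoint operators has only real eigenvalues.

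Once these hypotheses are in place, the structural theorem applies to each representative metric and gives $\operatorname{Isom}(G_{3.3} \times \mathbb{R}, M_i) \cong (G_{3.3} \times \mathbb{R}) \rtimes \operatorname{Aut}(G_{3.3} \times \mathbb{R})_{M_i}$ for $i = 1, 2, 3, 4$. It then only remains to substitute the four isometric automorphism groups determined in the preceding theorem, namely $\operatorname{diag}\{\operatorname{O}(2), 1, \pm 1\}$, $\operatorname{O}(2)$, $(\mathbb{Z}_2)^2$, and $\mathbb{Z}_2$. Reading these off line by line produces exactly the four claimed isomorphisms.

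The remaining point is to justify that these four metrics exhaust all isometry groups up to isomorphism; here I would invoke the classification used earlier (Theorem~3.4 in \cite{aitbenhaddou2025classification}), by which every left-invariant Riemannian metric on $G_{3.3} \times \mathbb{R}$ is equivalent, up to an automorphism, to one of $M_1, \dots, M_4$, together with the observation from the start of Section~5 that automorphism-equivalent metrics have conjugate, hence isomorphic, isometry groups. The main obstacle is essentially nonexistent at this stage: all the genuine work has already been carried out in the preceding theorem's computation of the $\operatorname{Aut}(G_{3.3} \times \mathbb{R})_{M_i}$, and the only real verification needed here is the type $(R)$ condition, which is supplied by the Proposition of Section~2. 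What is left is direct application of the structural theorem and routine substitution.
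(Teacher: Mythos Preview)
Your proposal is correct and matches the paper's approach exactly: the corollary is stated without proof in the paper precisely because it is the immediate combination of the structural theorem of Section~2 (applicable since $\operatorname{A}_{3,3}\oplus\operatorname{A}_1$ is solvable of type $(R)$ by the Proposition there) with the preceding theorem's computation of the four groups $\operatorname{Aut}(G_{3.3}\times\mathbb{R})_{M_i}$. Your additional remark about exhaustiveness via the classification in \cite{aitbenhaddou2025classification} is consistent with the paper's general framework laid out at the start of Section~5.
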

\subsection{The isometry group of the Lie group $G_{c} \times \mathbb{R}$}
By theorem 3.5. in \cite{aitbenhaddou2025classification}, any left-invariant Riemannian metric on $\operatorname{A}_{3, 5}^{\alpha} \oplus \operatorname{A}_1 = \operatorname{Lie}(G_{c} \times \mathbb{R})$ is equivalent, up to automorphism, to the following metric
$$U = \begin{bmatrix}
1 & \alpha & 0 & \gamma\\
0 & 1 & 0 & \lambda\\
0 & 0 & \beta & \mu\\
0 & 0 & 0 & 1
\end{bmatrix} \quad \beta > 0, \quad \alpha \in \mathbb{R}, \quad \gamma, \lambda, \mu \geq 0.$$
We distinguish four cases associated with this metric
\begin{enumerate}
\item If $\alpha = \gamma = \lambda = \mu = 0$, then according to the bijection $\varphi$, the symmetric positive definite matrix associated to $U$ is $M_1 = (U^{-1})^t(U^{-1}) = \operatorname{diag}\left\lbrace 1, 1, \frac{1}{\beta^2}, 1\right\rbrace$.
\item If $\lambda > 0$ and $\alpha = \gamma = \mu = 0$, then by the bijection $\varphi$, the symmetric positive definite matrix associated to $U$ is
$$M_2 = (U^{-1})^t(U^{-1}) = \begin{bmatrix}
1 & 0 & 0 & 0\\
0 & 1 & 0 & -\lambda\\
0 & 0 & \frac{1}{\beta^2} & 0\\
0 & -\lambda & 0 & \lambda^2 + 1
\end{bmatrix}.$$
\item If $\alpha \neq 0$, $\lambda > 0$ and $\gamma = \mu = 0$, then according to the bijection $\varphi$, the symmetric positive definite matrix associated to $U$ is 
$$M_3 = (U^{-1})^t(U^{-1}) = \begin{bmatrix}
1 & -\alpha & 0 & \alpha\lambda\\
\\
-\alpha & 1 + \alpha^2 & 0 & -\alpha^2\lambda - \lambda\\
\\
0 & 0 & \frac{1}{\beta^2} & 0\\
\\
\alpha\lambda & -\alpha^2\lambda - \lambda & 0 & \alpha^2\lambda^2+ \lambda^2 + 1
\end{bmatrix}.$$
\item If $\alpha \neq 0$, $\lambda, \mu > 0$ and $\gamma = 0$, then based on the bijection $\varphi$, the symmetric positive definite matrix associated to $U$ is 
$$M_4 = (U^{-1})^t(U^{-1}) = \begin{bmatrix}
1 & -\alpha & 0 & \alpha\lambda\\
\\
-\alpha & 1 + \alpha^2 & 0 & -\alpha^2\lambda - \lambda\\
\\
0 & 0 & \frac{1}{\beta^2} & \frac{-\mu}{\beta^2}\\
\\
\alpha\lambda & -\alpha^2\lambda - \lambda & \frac{-\mu}{\beta^2} & \alpha^2\lambda^2+ \lambda^2 + \frac{\mu^2}{\beta^2} + 1
\end{bmatrix}.$$
\end{enumerate}
\begin{theorem} 
The possible groups of isometric automorphisms of $G_{c} \times \mathbb{R}$ are
\begin{eqnarray*}
\operatorname{Aut}\left(G_{c} \times \mathbb{R}\right)_{M_1}  &\cong& \left(\mathbb{Z}_2\right)^3\\
\operatorname{Aut}\left(G_{c} \times \mathbb{R}\right)_{M_2}  &\cong& \left(\mathbb{Z}_2\right)^2\\
\operatorname{Aut}\left(G_{c} \times \mathbb{R}\right)_{M_3}  &\cong& \mathbb{Z}_2\\
\operatorname{Aut}\left(G_{c} \times \mathbb{R}\right)_{M_4}  &=& \left\lbrace I_4\right\rbrace.
\end{eqnarray*}
\end{theorem}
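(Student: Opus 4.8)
The plan is to follow the same template as in the previous subsections, working entirely at the level of the Lie algebra $\operatorname{A}_{3,5}^{\alpha}\oplus\operatorname{A}_1$. First I would record the general form of an automorphism. Since the derived algebra is $\langle e_1,e_2\rangle$ and the centre is $\langle e_4\rangle$, any automorphism must preserve both; because $\operatorname{ad}(e_3)$ acts on $\langle e_1,e_2\rangle$ with the two distinct eigenvalues $1$ and $\alpha$ (distinct since $0<|\alpha|<1$), the vectors $e_1,e_2$ are forced to be eigendirections and, matching $[e_1,e_3]=e_1$ and $[e_2,e_3]=\alpha e_2$, the $e_3$-component of the image of $e_3$ must equal $1$. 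This yields (as in \cite{christodoulakis2003automorphisms}) the form
$$A=\begin{bmatrix} a_1 & 0 & a_3 & 0\\ 0 & a_6 & a_7 & 0\\ 0 & 0 & 1 & 0\\ 0 & 0 & a_{15} & a_{16}\end{bmatrix},\qquad a_1,a_6,a_{16}\neq 0.$$

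Next I would treat the maximal metric $M_1=\operatorname{diag}\{1,1,\tfrac{1}{\beta^2},1\}$. Imposing $A^{t}M_1A=M_1$ and using that the third coordinate is $M_1$-orthogonal to the others, the equation splits into a diagonal part forcing $a_1,a_6,a_{16}=\pm1$ and off-diagonal relations forcing the shear entries $a_3,a_7,a_{15}$ to vanish. Hence $\operatorname{Aut}(G_c\times\mathbb{R})_{M_1}=\{\operatorname{diag}(\pm1,\pm1,1,\pm1)\}\cong(\mathbb{Z}_2)^3$, which is the maximal group.

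For $M_2,M_3,M_4$ I would first argue the containment $\operatorname{Aut}(G_c\times\mathbb{R})_{M_i}\subseteq\operatorname{Aut}(G_c\times\mathbb{R})_{M_1}$. By Remark \ref{orthogonal} any isometric automorphism preserves the positive definite form $M_i$, so it lies in a compact group and its real eigenvalues $a_1,a_6,1,a_{16}$ all have modulus $1$; positive-definiteness of the relevant principal submatrix, read off from the $(3,3)$-entry of $A^{t}M_iA=M_i$, then forces $a_3=a_7=a_{15}=0$, reducing $A$ to a sign-diagonal $\operatorname{diag}(\epsilon_1,\epsilon_2,1,\epsilon_4)$. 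Conjugating $M_i$ by such a matrix multiplies its $(p,q)$-entry by $\epsilon_p\epsilon_q$, so preservation is equivalent to requiring $\epsilon_p\epsilon_q=1$ for every nonzero off-diagonal entry $(p,q)$ of $M_i$. For $M_2$ the only such entry is $(2,4)$, giving $\epsilon_2=\epsilon_4$ and the group $(\mathbb{Z}_2)^2$; for $M_3$ the entries $(1,2),(1,4),(2,4)$ force $\epsilon_1=\epsilon_2=\epsilon_4$, giving $\mathbb{Z}_2$; for $M_4$ the additional nonzero entry $(3,4)$ forces $\epsilon_4=1$ and hence all signs equal $1$, giving $\{I_4\}$.

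The hard part will be the containment used for $M_4$. For $M_2$ and $M_3$ the third coordinate is $M_i$-orthogonal to the span of $e_1,e_2,e_4$, so the $(3,3)$-equation decouples and the vanishing of the shears is immediate from positive-definiteness. For $M_4$, however, the entry $(M_4)_{34}=-\mu/\beta^2$ couples $e_3$ with $e_4$, so the $(3,3)$-equation alone no longer isolates the shear parameters; here the argument must instead use the full $M_4$-orthogonality condition $M_4A=A^{t}M_4$ (equivalently $A^2=I$, which holds because the eigenvalues are real and of modulus $1$) together with the remaining entries of $A^{t}M_4A=M_4$ to conclude that $a_3,a_7,a_{15}$ vanish. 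Once this is settled, the rest is the routine sign bookkeeping described above.
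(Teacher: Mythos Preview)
For $M_1$, $M_2$, and $M_3$ your argument is correct and actually supplies a justification that the paper omits. The paper, after computing $\operatorname{Aut}(G_c\times\mathbb{R})_{M_1}$ directly, simply asserts that this is ``the maximal group of isometric automorphisms'' and then merely checks which of its eight sign--diagonal elements preserve $M_2,M_3,M_4$, without ever proving the containment $\operatorname{Aut}(G_c\times\mathbb{R})_{M_i}\subseteq\operatorname{Aut}(G_c\times\mathbb{R})_{M_1}$. Your $(3,3)$-entry argument fills that gap for $i=2,3$: since $e_3$ is $M_i$-orthogonal to $\langle e_1,e_2,e_4\rangle$ there, the equation $(A^tM_iA)_{33}=(M_i)_{33}$ reads $\tfrac{1}{\beta^2}+Q(a_3,a_7,a_{15})=\tfrac{1}{\beta^2}$ with $Q$ positive definite, forcing the shears to vanish.

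For $M_4$, however, the difficulty you flagged is fatal: the claimed conclusion $\operatorname{Aut}(G_c\times\mathbb{R})_{M_4}=\{I_4\}$ is false, so no argument can close the gap. With $s=\dfrac{2\mu}{\mu^2+\beta^2}$, the matrix
\[
A=\begin{bmatrix}-1&0&0&0\\0&-1&\lambda s&0\\0&0&1&0\\0&0&s&-1\end{bmatrix}
\]
lies in $\operatorname{Aut}(\operatorname{A}_{3,5}^{\alpha}\oplus\operatorname{A}_1)$, satisfies $A^2=I_4$, and satisfies $A^tM_4A=M_4$ (equivalently $U^{-1}AU=\operatorname{diag}(-1,-1,R)\in\operatorname{O}(4)$ with $R$ a reflection in the $(e_3,e_4)$-plane), yet has $a_7,a_{15}\neq 0$. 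A direct computation of $A^tM_4A=M_4$ confirms this: the off--diagonal entries force $a_1=a_6=a_{16}=\epsilon$, $a_3=0$, $a_7=\lambda a_{15}$, and then the $(3,4)$-entry gives $a_{15}=0$ if $\epsilon=1$ but $a_{15}=s$ if $\epsilon=-1$. Hence $\operatorname{Aut}(G_c\times\mathbb{R})_{M_4}=\{I_4,A\}\cong\mathbb{Z}_2$, and neither your proposed use of $A^2=I$ nor the remaining entries of $A^tM_4A=M_4$ can exclude this element. The paper's ``simple way'' fails for precisely the same reason: the tacit containment $\operatorname{Aut}(G_c\times\mathbb{R})_{M_4}\subseteq\operatorname{Aut}(G_c\times\mathbb{R})_{M_1}$ is not true.
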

\begin{proof}
The automorphism group of $\operatorname{A}_{3, 5}^{\alpha} \oplus \operatorname{A}_1 = \operatorname{Lie}(G_{c} \times \mathbb{R})$ consists of elements of the form \cite{christodoulakis2003automorphisms}
$$\begin{bmatrix}
a_1 & 0 & a_3 & 0\\
0 & a_6 & a_7 & 0\\
0 & 0 & 1 & 0\\
0 & 0 & a_{15} & a_{16}
\end{bmatrix}.$$
Let $A$ be an automorphism of the above form, then
\begin{eqnarray*}
A \in \operatorname{Aut}\left(G_{c} \times \mathbb{R}\right)_{M_1} &\Leftrightarrow& A^tM_1A = M_1\\
&\Leftrightarrow& a_3 = a_7 = a_{15} = 0, a_1 = \pm1, a_6 = \pm1, a_{16} = \pm1.
\end{eqnarray*}
Hence $\operatorname{Aut}\left(G_{c} \times \mathbb{R}\right)_{M_1} = \operatorname{diag}\left\lbrace \pm1, \pm1, 1, \pm1\right\rbrace \cong \left(\mathbb{Z}_2\right)^3$. This is the maximal group of isometric automorphisms on $G_{c} \times \mathbb{R}$. For the metric $M_2$, we have
\begin{eqnarray*}
A \in \operatorname{Aut}\left(G_{c} \times \mathbb{R}\right)_{M_2} &\Leftrightarrow& A^tM_2A = M_2\\
&\Leftrightarrow& a_6 = a_{16} = \pm1, a_3 = 0, a_1 = \pm1
\end{eqnarray*}
Additionally, comparing the $(4, 3)$-th and $(3, 2)$-th components of the matrices $A^tM_2A$ and $M_2$ shows that $a_7$ and $a_{15}$ must satisfy the following system
$$\left\lbrace\begin{array}{lll}
-\lambda a_7 + (1 + \lambda^2)a_{15} = 0\\
a_7 - \lambda a_{15}  = 0
\end{array}\right.$$
This implies that $a_7 = a_{15} = 0$. Thus
\begin{eqnarray*}
\operatorname{Aut}\left(G_{c} \times \mathbb{R}\right)_{M_2} &=& \left\lbrace I_4, \begin{bmatrix}
1 & 0 & 0 & 0\\
0 & -1 & 0 & 0\\
0 & 0 & 1 & 0\\
0 & 0 & 0 & -1
\end{bmatrix}, \begin{bmatrix}
-1 & 0 & 0 & 0\\
0 & 1 & 0 & 0\\
0 & 0 & 1 & 0\\
0 & 0 & 0 & 1
\end{bmatrix}, \begin{bmatrix}
-1 & 0 & 0 & 0\\
0 & -1 & 0 & 0\\
0 & 0 & 1 & 0\\
0 & 0 & 0 & -1
\end{bmatrix} \right\rbrace\\
&\cong& (\mathbb{Z}_2)^2.
\end{eqnarray*}
There is a simple way to find this result. Since $\operatorname{Aut}\left(G_{c} \times \mathbb{R}\right)_{M_1}  \cong \left(\mathbb{Z}_2\right)^3$ is the maximal group of isometric automorphisms, it suffices to verify which of its elements preserve the metric $M_2$. For the other two metrics, we use this simple way and we obtain that
$$A \in \operatorname{Aut}\left(G_{c} \times \mathbb{R}\right)_{M_3} \Leftrightarrow A^tM_3A = M_3 \Leftrightarrow A \in \left\lbrace I_4, \operatorname{diag}\left\lbrace -1, -1, 1, -1\right\rbrace\right\rbrace \cong \mathbb{Z}_2.$$
$$A \in \operatorname{Aut}\left(G_{c} \times \mathbb{R}\right)_{M_4} \Leftrightarrow A^tM_4A = M_4 \Leftrightarrow A = I_4.$$
\end{proof}
\begin{corollary}
The isometry group of $G_{c} \times \mathbb{R}$ is given by
\begin{eqnarray*}
\operatorname{Isom}\left(G_{c} \times \mathbb{R}, M_{1}\right)  &\cong& \left(G_{c} \times \mathbb{R}\right) \rtimes \left(\mathbb{Z}_2\right)^3\\
\operatorname{Isom}\left(G_{c} \times \mathbb{R}, M_2\right)  &\cong& \left(G_{c} \times \mathbb{R}\right) \rtimes \left(\mathbb{Z}_2\right)^2\\
\operatorname{Isom}\left(G_{c} \times \mathbb{R}, M_3\right)  &\cong& \left(G_{c} \times \mathbb{R}\right) \rtimes \mathbb{Z}_2\\
\operatorname{Isom}\left(G_{c} \times \mathbb{R}, M_4\right)  &\cong& G_{c} \times \mathbb{R}.
\end{eqnarray*}
\end{corollary}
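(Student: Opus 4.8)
The plan is to obtain this corollary as a direct consequence of the structural theorem proved in the Preliminaries, namely that a simply connected solvable Lie group $G$ of type $(R)$ satisfies $\operatorname{Isom}(G, g) \cong G \rtimes \operatorname{Aut}(G)_g$ for every left-invariant Riemannian metric $g$. First I would check that $G_c \times \mathbb{R}$ meets the three hypotheses of that theorem. It is connected and simply connected by the explicit realization given in Section 3. Its Lie algebra $\operatorname{A}_{3,5}^{\alpha} \oplus \operatorname{A}_1$ is solvable, as recorded for every algebra in Table 1. Finally, it is of type $(R)$: the Proposition classifying the exceptional cases shows that the only members of Table 1 failing type $(R)$ are $\operatorname{A}_{3,7}^{\alpha} \oplus \operatorname{A}_1$, $\operatorname{A}_{4,6}^{\alpha,\beta}$, $\operatorname{A}_{4,11}^{\alpha}$ and $\operatorname{A}_{4,12}$, and $\operatorname{A}_{3,5}^{\alpha} \oplus \operatorname{A}_1$ is not among them.

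With the hypotheses verified, the structural theorem immediately gives $\operatorname{Isom}(G_c \times \mathbb{R}, M_i) \cong (G_c \times \mathbb{R}) \rtimes \operatorname{Aut}(G_c \times \mathbb{R})_{M_i}$ for each $i = 1,2,3,4$. It then remains only to insert the four isometric-automorphism groups computed in the preceding theorem, namely $\left(\mathbb{Z}_2\right)^3$, $\left(\mathbb{Z}_2\right)^2$, $\mathbb{Z}_2$ and $\left\lbrace I_4\right\rbrace$ respectively. Substituting these yields the four displayed isomorphisms; in the last case the trivial factor $\left\lbrace I_4\right\rbrace$ collapses the semidirect product to $G_c \times \mathbb{R}$ itself.

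To confirm that these four metrics suffice to describe the isometry group of every left-invariant metric on $G_c \times \mathbb{R}$, I would invoke the classification of such metrics (Theorem 3.5 in \cite{aitbenhaddou2025classification}), which asserts that any left-invariant Riemannian metric is equivalent up to an automorphism to one of the representatives $U$ listed above; since automorphism-equivalent metrics have conjugate, hence isomorphic, isometry groups, no further cases arise. I anticipate no genuine obstacle in this argument: all the substantive work has already been carried out in the structural theorem and in the preceding automorphism-group computation, and the only delicate point is the verification of the type $(R)$ condition, which has already been settled by the Proposition.
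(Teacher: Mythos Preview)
Your proposal is correct and follows exactly the route the paper intends: the corollary is presented without its own proof because it is an immediate consequence of the structural theorem $\operatorname{Isom}(G,g)\cong G\rtimes\operatorname{Aut}(G)_g$ for simply connected solvable groups of type $(R)$, combined with the isometric-automorphism groups computed in the preceding theorem. Your verification of the hypotheses (simply connected, solvable, type $(R)$ via the Proposition on exceptional algebras) and your remark about automorphism-equivalent metrics having conjugate isometry groups are precisely the ingredients the paper relies on implicitly.
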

\subsection{The isometry group of the Lie group $G_{4.2}^{\alpha}$}
By theorem 4.1. in \cite{aitbenhaddou2025classification}, any left-invariant Riemannian metric on $\operatorname{A}_{4, 2}^{\alpha} = \operatorname{Lie}(G_{4.2}^{\alpha})$ is equivalent, up to automorphism, to the following metric
$$U = \begin{bmatrix}
1 & \alpha & \gamma & 0\\
0 & \beta & 0 & 0\\
0 & 0 & 1 & 0\\
0 & 0 & 0 & \lambda
\end{bmatrix} \quad \beta, \lambda > 0, \quad \alpha \geq 0, \quad \gamma \in \mathbb{R}.$$
We distinguish two cases associated with this metric
\begin{enumerate}
\item If $\alpha = \gamma = 0$, then according to the bijection $\varphi$, the symmetric positive definite matrix associated to $U$ is $M_1 = (U^{-1})^t(U^{-1}) = \operatorname{diag}\left\lbrace 1, \frac{1}{\beta^2}, 1, \frac{1}{\lambda^2}\right\rbrace$.
\item If $\alpha > 0$ and $\gamma = 0$, then according to the bijection $\varphi$, the symmetric positive definite matrix associated to $U$ is
$$M_2 = (U^{-1})^t(U^{-1}) = \begin{bmatrix}
1 & \frac{-\alpha}{\beta} & 0 & 0\\
\frac{-\alpha}{\beta} & \frac{\alpha^2 + 1}{\beta^2} & 0 & 0\\
0 & 0 & 1 & 0\\
0 & 0 & 0 & \frac{1}{\lambda^2}
\end{bmatrix}.$$
\end{enumerate}
\begin{theorem} \label{similar}
The possible groups of isometric automorphisms of $G_{4.2}^{\alpha}$ are
\begin{eqnarray*}
\operatorname{Aut}\left(G_{4.2}^{\alpha}\right)_{M_1}  &\cong& \left(\mathbb{Z}_2\right)^2\\
\operatorname{Aut}\left(G_{4.2}^{\alpha}\right)_{M_2}  &\cong& \mathbb{Z}_2.
\end{eqnarray*}
\end{theorem}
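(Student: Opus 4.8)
The plan is to follow the same scheme as the preceding subsections: first record the general shape of an automorphism of $\operatorname{A}_{4,2}^{\alpha}=\operatorname{Lie}(G_{4.2}^{\alpha})$ from \cite{christodoulakis2003automorphisms}, then impose the isometry condition $A^tM_iA=M_i$ and read off the finite solution group. The structural reason behind the form is worth recording: $\mathfrak{n}=\langle e_1,e_2,e_3\rangle$ is the abelian derived algebra, hence a characteristic ideal, so every automorphism preserves it and its bottom row vanishes off the diagonal. Writing $D=\operatorname{ad}(e_4)|_{\mathfrak{n}}$, the bracket relations $[e_i,e_4]$ reduce the automorphism condition to $ND=a_{44}DN$ for the $3\times3$ block $N$ acting on $\mathfrak{n}$, with the fourth column otherwise free. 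Since $D$ has eigenvalues $-\alpha,-1,-1$ with a single Jordan block on the $-1$ eigenspace and $\alpha\neq1$, comparing the eigenvalues of $D$ and $a_{44}D$ forces $a_{44}=1$ and makes $N$ block diagonal, commuting with the Jordan block; thus an automorphism has the form
\[
A=\begin{bmatrix} a_1 & 0 & 0 & a_4\\ 0 & a_6 & a_7 & a_8\\ 0 & 0 & a_6 & a_{12}\\ 0 & 0 & 0 & 1\end{bmatrix},\qquad a_1,a_6\neq0.
\]

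For $M_1=\operatorname{diag}\{1,\tfrac{1}{\beta^2},1,\tfrac{1}{\lambda^2}\}$ I would substitute this $A$ into $A^tM_1A=M_1$ and compare entries. The $(1,1)$ and $(2,2)$ equations give $a_1^2=1$ and $a_6^2=1$, so $a_1,a_6\in\{\pm1\}$ \emph{independently}; the $(2,3)$ equation forces $a_7=0$; and the fourth-column/$(4,4)$ equations (equivalently the orthogonality restriction of Remark \ref{orthogonal}, a vanishing sum of squares) force $a_4=a_8=a_{12}=0$. The solutions are exactly $A=\operatorname{diag}\{a_1,a_6,a_6,1\}$ with $a_1,a_6\in\{\pm1\}$, four commuting involutions, so $\operatorname{Aut}(G_{4.2}^{\alpha})_{M_1}\cong(\mathbb{Z}_2)^2$; this is the maximal isometric automorphism group.

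For $M_2$, because $\operatorname{Aut}(G_{4.2}^{\alpha})_{M_1}$ is maximal it suffices to test which of its four elements satisfy $A^tM_2A=M_2$; equivalently I would rerun the entry comparison for the full $A$. The new feature is the off-diagonal entry $c=-\alpha/\beta$ coupling $e_1$ and $e_2$: the $(1,2)$ equation now reads $a_1a_6\,c=c$, i.e.\ $a_1a_6=1$, so the two previously independent signs become correlated, $a_1=a_6$. The remaining equations again annihilate $a_7$ and the fourth column, where one uses $d-c^2=\tfrac{1}{\beta^2}\neq0$ (with $d=(\alpha^2+1)/\beta^2$) to conclude $a_8=a_4=0$. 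Hence only $\operatorname{diag}\{1,1,1,1\}$ and $\operatorname{diag}\{-1,-1,-1,1\}$ survive, giving $\operatorname{Aut}(G_{4.2}^{\alpha})_{M_2}\cong\mathbb{Z}_2$.

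The computation is otherwise routine; the single point requiring care is the $M_2$ case, where one must check precisely that the coupling term collapses the two independent reflections of the $M_1$ case into the one correlated reflection $a_1=a_6$, and that the translational parameters $a_4,a_8,a_{12}$ are still forced to vanish despite the nondiagonal top-left block — the latter resting on the positive-definiteness of that $2\times2$ block, i.e.\ on $d-c^2\neq0$.
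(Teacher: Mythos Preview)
Your proof is correct and follows essentially the same approach as the paper: record the automorphism form, impose $A^tM_iA=M_i$, and read off the surviving diagonal sign patterns. The only difference is that you sketch a derivation of the automorphism form via the Jordan structure of $\operatorname{ad}(e_4)$ rather than simply citing \cite{christodoulakis2003corrigendum}, and you spell out the $(1,2)$-entry constraint $a_1a_6=1$ in the $M_2$ case explicitly; both are harmless elaborations of the paper's argument.
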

\begin{proof}
The automorphism group of $\operatorname{A}_{4, 2}^{\alpha} = \operatorname{Lie}(G_{4.2}^{\alpha})$ consists of elements of the form \cite{christodoulakis2003corrigendum}
$$\begin{bmatrix}
a_1 & 0 & 0 & a_4\\
0 & a_6 & a_7 & a_8\\
0 & 0 & a_6 & a_{12}\\
0 & 0 & 0 & 1
\end{bmatrix}.$$
Let $A$ be an automorphism of the above form, then
\begin{eqnarray*}
A \in \operatorname{Aut}\left(G_{4.2}^{\alpha}\right)_{M_1} &\Leftrightarrow& A^tM_1A = M_1\\
&\Leftrightarrow& a_4 = a_7 = a_8 = a_{12} = 0, a_1 = \pm1, a_6 = \pm1.
\end{eqnarray*}
Thus
\begin{eqnarray*}
\operatorname{Aut}\left(G_{4.2}^{\alpha}\right)_{M_1} &=& \left\lbrace I_4, \begin{bmatrix}
-1 & 0 & 0 & 0\\
0 & 1 & 0 & 0\\
0 & 0 & 1 & 0\\
0 & 0 & 0 & 1
\end{bmatrix}, \begin{bmatrix}
1 & 0 & 0 & 0\\
0 & -1 & 0 & 0\\
0 & 0 & -1 & 0\\
0 & 0 & 0 & 1
\end{bmatrix}, \begin{bmatrix}
-1 & 0 & 0 & 0\\
0 & -1 & 0 & 0\\
0 & 0 & -1 & 0\\
0 & 0 & 0 & 1
\end{bmatrix} \right\rbrace\\
&\cong& (\mathbb{Z}_2)^2.
\end{eqnarray*}
For the metric $M_2$, we obtain that 
$$A \in \operatorname{Aut}\left(G_{4.2}^{\alpha}\right)_{M_2} \Leftrightarrow A \in \left\lbrace I_4, \operatorname{diag}\left\lbrace -1, -1, -1, 1\right\rbrace \right\rbrace \cong \mathbb{Z}_2.$$
If we choose the metric $U$ where $\alpha > 0$ and $\gamma \neq 0$, its group of isometric automorphisms is still $\mathbb{Z}_2$ as above.
\end{proof}
\begin{corollary}
The isometry group of $G_{4.2}^{\alpha}$ is given by
\begin{eqnarray*}
\operatorname{Isom}\left(G_{4.2}^{\alpha}, M_{1}\right)  &\cong& G_{4.2}^{\alpha} \rtimes \left(\mathbb{Z}_2\right)^2\\
\operatorname{Isom}\left(G_{4.2}^{\alpha}, M_{2}\right)  &\cong& G_{4.2}^{\alpha} \rtimes \mathbb{Z}_2.
\end{eqnarray*}
\end{corollary}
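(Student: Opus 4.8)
The plan is to turn the statement into a finite linear-algebra computation. Since $\operatorname{A}_{4,2}^{\alpha}$ is solvable of type $(R)$ (it is not among the four exceptions in the Proposition above), the structure theorem for type $(R)$ groups gives $\operatorname{Isom}(G_{4.2}^{\alpha}, M_i) \cong G_{4.2}^{\alpha} \rtimes \operatorname{Aut}(G_{4.2}^{\alpha})_{M_i}$, so it suffices to identify each isotropy group $\operatorname{Aut}(G_{4.2}^{\alpha})_{M_i}$. First I would record the general form of an automorphism of $\mathfrak{g} = \operatorname{A}_{4,2}^{\alpha}$ from \cite{christodoulakis2003corrigendum}, namely the upper-triangular matrix with free entries $a_1, a_4, a_6, a_7, a_8, a_{12}$ and the repeated block entry $a_6$ in positions $(2,2)$ and $(3,3)$. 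By Remark \ref{orthogonal} every isometric automorphism must be orthogonal for the metric, so the entire content is the single matrix equation $A^t M_i A = M_i$.

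For $M_1 = \operatorname{diag}\{1, 1/\beta^2, 1, 1/\lambda^2\}$ I would read the condition off column by column. The $(1,1)$ and $(2,2)$ diagonal comparisons force $a_1^2 = 1$ and $a_6^2 = 1$; the $(3,3)$ comparison then reads $a_7^2/\beta^2 + a_6^2 = 1$, so $a_7 = 0$; and the $(4,4)$ comparison reduces to $a_4^2 + a_8^2/\beta^2 + a_{12}^2 = 0$, a sum of squares, hence $a_4 = a_8 = a_{12} = 0$. What survives is $A = \operatorname{diag}\{a_1, a_6, a_6, 1\}$ with $a_1, a_6 \in \{\pm 1\}$, the four-element group $(\mathbb{Z}_2)^2$ in the statement. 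Being the most symmetric metric, $M_1$ carries the maximal isotropy group, consistent with the convention fixed earlier.

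For $M_2$ I would rerun the same column comparison. The new ingredient is the off-diagonal entry $-\alpha/\beta$ in positions $(1,2)$ and $(2,1)$: comparing the $(1,2)$ entries of $A^t M_2 A$ and $M_2$ gives $a_1 a_6 = 1$ (this is exactly where $\alpha > 0$ is used), so the two signs must agree, while the diagonal and $(4,4)$ comparisons again kill $a_7, a_4, a_8, a_{12}$. Only $I_4$ and $\operatorname{diag}\{-1,-1,-1,1\}$ remain, i.e. $\mathbb{Z}_2$; equivalently one checks which of the four $M_1$-stabilizing elements survive. I do not expect a genuine obstacle here, as the whole argument is bookkeeping of a quadratic system. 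The only two points requiring care are verifying that the $(4,4)$-entry condition is a genuine sum of squares (so that it forces $a_4, a_8, a_{12}$ to vanish rather than merely constraining them), and noting that it is precisely the hypothesis $\alpha > 0$ that destroys the extra sign freedom in passing from $M_1$ to $M_2$.
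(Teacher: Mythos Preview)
Your proposal is correct and follows essentially the same route as the paper: invoke the type $(R)$ structure theorem to reduce to computing $\operatorname{Aut}(G_{4.2}^{\alpha})_{M_i}$, write down the automorphism form from \cite{christodoulakis2003corrigendum}, and solve $A^t M_i A = M_i$ entry by entry to obtain $(\mathbb{Z}_2)^2$ and $\mathbb{Z}_2$ respectively. Your account is in fact slightly more explicit than the paper's, spelling out why the $(4,4)$-entry is a genuine sum of squares and isolating where the hypothesis $\alpha>0$ on the metric parameter breaks the extra sign freedom.
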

\subsection{The isometry group of the Lie group $G_{4.2}^{1}$}
By theorem 4.2. in \cite{aitbenhaddou2025classification}, any left-invariant Riemannian metric on $\operatorname{A}_{4, 2}^{1} = \operatorname{Lie}(G_{4.2}^{1})$ is equivalent, up to automorphism, to the following metric
$$U = \begin{bmatrix}
1 & \alpha & 0 & 0\\
0 & \beta & 0 & 0\\
0 & 0 & 1 & 0\\
0 & 0 & 0 & \gamma
\end{bmatrix} \quad \beta, \gamma > 0, \quad \alpha \geq 0.$$
We distinguish two cases associated with this metric
\begin{enumerate}
\item If $\alpha = 0$, then according to the bijection $\varphi$, the symmetric positive definite matrix associated to $U$ is $M_1 = (U^{-1})^t(U^{-1}) = \operatorname{diag}\left\lbrace 1, \frac{1}{\beta^2}, 1, \frac{1}{\gamma^2}\right\rbrace$.
\item If $\alpha > 0$, then using the bijection $\varphi$, the symmetric positive definite matrix associated to $U$ is
$$M_2 = (U^{-1})^t(U^{-1}) = \begin{bmatrix}
1 & \frac{-\alpha}{\beta} & 0 & 0\\
\frac{-\alpha}{\beta} & \frac{\alpha^2 + 1}{\beta^2} & 0 & 0\\
0 & 0 & 1 & 0\\
0 & 0 & 0 & \frac{1}{\gamma^2}
\end{bmatrix}.$$
\end{enumerate}
\begin{theorem} 
The possible groups of isometric automorphisms of $G_{4.2}^{1}$ are
\begin{eqnarray*}
\operatorname{Aut}\left(G_{4.2}^{1}\right)_{M_1}  &\cong& \left(\mathbb{Z}_2\right)^2\\
\operatorname{Aut}\left(G_{4.2}^{1}\right)_{M_2}  &\cong& \mathbb{Z}_2.
\end{eqnarray*}
\end{theorem}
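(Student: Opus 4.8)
The plan is to mirror the strategy already used for $G_{4.2}^{\alpha}$ in Theorem \ref{similar}. First I would record the general form of an element of $\operatorname{Aut}(\operatorname{A}_{4,2}^{1}) \cong \operatorname{Aut}(G_{4.2}^{1})$, taken from \cite{christodoulakis2003corrigendum}. Here the value $\alpha = 1$ is degenerate: the generators $e_1, e_2, e_3$ all acquire the same weight under $\operatorname{ad} e_4$ (the semisimple part of $\operatorname{ad} e_4$ on $\langle e_1, e_2, e_3\rangle$ becomes the identity, with a single Jordan block of size two on $\langle e_2, e_3\rangle$ coming from $[e_3, e_4] = e_2 + e_3$). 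Consequently the automorphism group is typically richer than in the generic case treated in Theorem \ref{similar}, since $e_1$ may now mix with $e_2, e_3$ subject only to compatibility with the nilpotent relation. Identifying this enlarged form precisely is exactly why $\alpha = 1$ deserves a separate treatment.

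Next I would invoke Remark \ref{orthogonal}: for every left-invariant metric one has $\operatorname{Aut}(G)_g \subset \operatorname{O}(4)$, so any isometric automorphism $A$ must be orthogonal. Imposing orthogonality on the general automorphism form annihilates the upper-triangular and last-column (translation-type) entries and forces the surviving mixing block to be orthogonal while remaining compatible with the Lie-algebra brackets. This reduces the problem to testing a small finite list of sign-type matrices, just as in the earlier subsections.

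Then, for each representative metric, I would solve the matrix equation $A^t M_i A = M_i$ over that finite list. For $M_1 = \operatorname{diag}\{1, \tfrac{1}{\beta^2}, 1, \tfrac{1}{\gamma^2}\}$, the equation selects the diagonal sign matrices preserving these (generically distinct) entries, and I expect the surviving conditions $a_1 = \pm 1$, $a_6 = \pm 1$ together with the forced vanishing of the remaining entries to yield $\operatorname{Aut}(G_{4.2}^{1})_{M_1} \cong (\mathbb{Z}_2)^2$. For $M_2$, the off-diagonal block $-\tfrac{\alpha}{\beta}$ produced by $\alpha > 0$ couples the first two coordinates, removing one independent sign and cutting the group down to $\operatorname{Aut}(G_{4.2}^{1})_{M_2} \cong \mathbb{Z}_2$, exactly as in Theorem \ref{similar}.

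The main obstacle I anticipate is the very first step: correctly pinning down the enlarged automorphism group at the special value $\alpha = 1$ and then carrying out the orthogonality reduction \emph{without overlooking} any off-diagonal automorphism that the degeneracy now permits (in particular a genuine $e_1$–$e_2$ or $e_1$–$e_3$ rotation surviving orthogonality). Once the orthogonal automorphisms are correctly enumerated, verifying $A^t M_i A = M_i$ is a routine finite computation identical in spirit to the cases already settled.
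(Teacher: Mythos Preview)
Your plan is correct and mirrors the paper's own proof: write down the automorphism form for $\operatorname{A}_{4,2}^{1}$ from \cite{christodoulakis2003corrigendum} (which indeed picks up the extra entries $a_3$ and $a_5$ absent in the generic $\alpha \neq 1$ case), then solve $A^tM_iA = M_i$ directly to obtain $a_3 = a_4 = a_5 = a_7 = a_8 = a_{12} = 0$ with $a_1 = \pm 1$, $a_6 = \pm 1$ for $M_1$, and the single surviving nontrivial element $\operatorname{diag}\{-1,-1,-1,1\}$ for $M_2$. The only minor difference is that the paper does not invoke Remark~\ref{orthogonal} as a preliminary reduction here but simply solves the metric equation in one pass; your worry about a surviving $e_1$--$e_3$ rotation is unfounded because the automorphism form has $0$ in position $(3,1)$, so orthogonality forces $a_3 = 0$ regardless of the equal weights.
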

\begin{proof}
The automorphism group of $\operatorname{A}_{4, 2}^{1} = \operatorname{Lie}(G_{4.2}^{1})$ consists of elements of the form \cite{christodoulakis2003corrigendum}
$$\begin{bmatrix}
a_1 & 0 & a_3 & a_4\\
a_5 & a_6 & a_7 & a_8\\
0 & 0 & a_6 & a_{12}\\
0 & 0 & 0 & 1
\end{bmatrix}.$$
Let $A$ be an automorphism of the above form, then
\begin{eqnarray*}
A \in \operatorname{Aut}\left(G_{4.2}^{1}\right)_{M_1} &\Leftrightarrow& A^tM_1A = M_1\\
&\Leftrightarrow& a_3 = a_4 = a_5 = a_7 = a_8 = a_{12} = 0, a_1 = \pm1, a_6 = \pm1.
\end{eqnarray*}
Thus
\begin{eqnarray*}
\operatorname{Aut}\left(G_{4.2}^{1}\right)_{M_1} &=& \left\lbrace I_4, \begin{bmatrix}
-1 & 0 & 0 & 0\\
0 & 1 & 0 & 0\\
0 & 0 & 1 & 0\\
0 & 0 & 0 & 1
\end{bmatrix}, \begin{bmatrix}
1 & 0 & 0 & 0\\
0 & -1 & 0 & 0\\
0 & 0 & -1 & 0\\
0 & 0 & 0 & 1
\end{bmatrix}, \begin{bmatrix}
-1 & 0 & 0 & 0\\
0 & -1 & 0 & 0\\
0 & 0 & -1 & 0\\
0 & 0 & 0 & 1
\end{bmatrix} \right\rbrace\\
&\cong& (\mathbb{Z}_2)^2.
\end{eqnarray*}
This is the maximal group of isometric automorphisms of $G_{4.2}^{1}$. For the metric $M_2$, we obtain that 
$$A \in \operatorname{Aut}\left(G_{4.2}^{1}\right)_{M_2} \Leftrightarrow A \in \left\lbrace I_4, \operatorname{diag}\left\lbrace -1, -1, -1, 1\right\rbrace \right\rbrace \cong \mathbb{Z}_2.$$
\end{proof}
\begin{corollary}
The isometry group of $G_{4.2}^{1}$ is given by
\begin{eqnarray*}
\operatorname{Isom}\left(G_{4.2}^{1}, M_{1}\right)  &\cong& G_{4.2}^{1} \rtimes \left(\mathbb{Z}_2\right)^2\\
\operatorname{Isom}\left(G_{4.2}^{1}, M_{2}\right)  &\cong& G_{4.2}^{1} \rtimes \mathbb{Z}_2.
\end{eqnarray*}
\end{corollary}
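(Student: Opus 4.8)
The plan is to reduce everything to linear algebra, exactly as in the preceding subsections. Since $G_{4.2}^1$ is simply connected we have $\operatorname{Aut}(G_{4.2}^1)\cong\operatorname{Aut}(\operatorname{A}_{4,2}^1)$, and the latter consists of the matrices of the displayed form, with $a_1,a_6\neq0$ and the remaining entries free. By Remark~\ref{orthogonal} an isometric automorphism must satisfy $A^tM_iA=M_i$, so for each representative metric I would impose this identity and read off the constraints on the entries of $A$. A convenient device is to write $A$ in $2\times2$ block form $A=\bigl[\begin{smallmatrix}P&Q\\0&R\end{smallmatrix}\bigr]$ with $P=\bigl[\begin{smallmatrix}a_1&0\\a_5&a_6\end{smallmatrix}\bigr]$ and $R=\bigl[\begin{smallmatrix}a_6&a_{12}\\0&1\end{smallmatrix}\bigr]$, and each metric as $\operatorname{diag}(S,T)$; then $A^tM_iA=M_i$ splits into the three equations $P^tSP=S$, $P^tSQ=0$, and $Q^tSQ+R^tTR=T$. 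Since $P$ and $S$ are invertible, the middle equation always forces $Q=0$, and with $Q=0$ the last equation reduces to $R^tTR=T$, which forces $a_{12}=0$ and $a_6^2=1$.

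For $M_1=\operatorname{diag}\{1,1/\beta^2,1,1/\gamma^2\}$ the block $S$ is diagonal and the computation is routine: the $(1,2)$ entry of $P^tSP=S$ is $a_5a_6/\beta^2$, which must vanish, so $a_5=0$ because $a_6\neq0$; the diagonal entries then give $a_1^2=a_6^2=1$. Hence the only surviving automorphisms are the four diagonal matrices with $a_1=\pm1$ and $a_6=\pm1$ (the entries in positions $(2,2)$ and $(3,3)$ being equal), and one obtains $\operatorname{Aut}(G_{4.2}^1)_{M_1}\cong(\mathbb{Z}_2)^2$.

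The delicate case, and the step I expect to be the main obstacle, is $M_2$. The essential feature of the exponent $1$ is that $e_1$ and $e_2$ lie in the same eigenspace of $\operatorname{ad}_{e_4}$; this is exactly why the automorphism group retains the off-diagonal parameter $a_5$, and it means one cannot simply intersect with the elements already found for $M_1$, because there is no a priori reason that $\operatorname{Aut}(G_{4.2}^1)_{M_2}\subseteq\operatorname{Aut}(G_{4.2}^1)_{M_1}$. I would therefore solve $P^tSP=S$ in full for $S=\bigl[\begin{smallmatrix}1&-\alpha/\beta\\-\alpha/\beta&(\alpha^2+1)/\beta^2\end{smallmatrix}\bigr]$ (here $\alpha>0$ is the metric parameter). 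The $(2,2)$ entry gives $a_6^2=1$, the $(1,2)$ entry gives $a_5=\alpha\beta(a_1-a_6)/(\alpha^2+1)$, and substituting into the $(1,1)$ entry shows that every pair $a_1,a_6\in\{\pm1\}$ is admissible.

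This is the crux. Besides $I_4$ and $\operatorname{diag}\{-1,-1,-1,1\}$ (the solutions with $a_1=a_6$ and $a_5=0$), the two ``shear-reflections'' with $a_1=-a_6$ and $a_5=\pm2\alpha\beta/(\alpha^2+1)\neq0$ also satisfy $A^tM_2A=M_2$, as one verifies by a direct multiplication; each squares to $I_4$ and their products close into a Klein four-group. In other words, the diagonal metric $M_1$ kills $a_5$ whereas the sheared metric $M_2$ does not, so the careful bookkeeping of the $a_5\neq0$ solutions is decisive, and my computation points to $\operatorname{Aut}(G_{4.2}^1)_{M_2}\cong(\mathbb{Z}_2)^2$. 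The hardest---and most important---part of the argument is therefore to settle the status of these extra solutions and to reconcile the count with the asserted $\mathbb{Z}_2$; I would recheck the $M_2$ case by an explicit matrix product before committing to the isomorphism type.
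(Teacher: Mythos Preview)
Your approach for $M_1$ coincides with the paper's: solve $A^tM_1A=M_1$ directly, deduce that all off-diagonal parameters vanish and $a_1,a_6\in\{\pm1\}$, and obtain $(\mathbb{Z}_2)^2$.

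For $M_2$ your suspicion is well founded and your computation is correct. The paper's proof simply asserts $\operatorname{Aut}(G_{4.2}^1)_{M_2}=\{I_4,\operatorname{diag}(-1,-1,-1,1)\}$ without details, in effect relying on the shortcut ``check which elements of the maximal group $\operatorname{Aut}(G)_{M_1}$ survive for $M_2$'' used in several other subsections. That shortcut is illegitimate precisely here, for the reason you identify: the entry $a_5$ --- present in $\operatorname{Aut}(\operatorname{A}_{4,2}^1)$ only because the Lie-algebra parameter equals $1$ --- is forced to zero by the diagonal metric $M_1$ but \emph{not} by $M_2$, so there is no inclusion $\operatorname{Aut}(G_{4.2}^1)_{M_2}\subseteq\operatorname{Aut}(G_{4.2}^1)_{M_1}$. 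Your two shear-reflections with $a_1=-a_6$ and $a_5=\pm 2\alpha\beta/(\alpha^2+1)$ are genuine isometric automorphisms (a direct multiplication confirms $A^tM_2A=M_2$), and together with $I_4$ and $\operatorname{diag}(-1,-1,-1,1)$ they close to a Klein four-group. Hence $\operatorname{Aut}(G_{4.2}^1)_{M_2}\cong(\mathbb{Z}_2)^2$, not $\mathbb{Z}_2$: the second line of the corollary as stated is in error, and there is nothing for you to reconcile --- your count stands and the paper's does not.
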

\subsection{The isometry group of the Lie group $G_{4.3}$}
The automorphism group of $G_{4.3}$ and $G_{4.2}^{\alpha}$ are similar. See \cite{christodoulakis2003automorphisms} for the automorphism group of $G_{4.3}$ and see \cite{christodoulakis2003corrigendum} for the one of $G_{4.2}^{\alpha}$ (you can see also Table 7 in the end of the paper \cite{biggs2016classification}). Hence these Lie groups have similar metrics, see \cite{aitbenhaddou2025classification}. Therefore they have similar isometric automorphism groups, thus by theorem \ref{similar}, we have $\operatorname{Aut}\left(G_{4.3}\right)_{M_1}  \cong \left(\mathbb{Z}_2\right)^2$ and $\operatorname{Aut}\left(G_{4.3}\right)_{M_2}  \cong \mathbb{Z}_2$. Hence
$$\operatorname{Isom}\left(G_{4.3}, M_{1}\right)  \cong G_{4.3} \rtimes \left(\mathbb{Z}_2\right)^2, \qquad \operatorname{Isom}\left(G_{4.3}, M_{2}\right)  \cong G_{4.3} \rtimes \mathbb{Z}_2.$$
\subsection{The isometry group of the Lie group $G_{4.4}$}
By theorem 4.4. in \cite{aitbenhaddou2025classification}, any left-invariant Riemannian metric on $G_{4.4}$ is equivalent, up to automorphism, to the following metric
$$U = \begin{bmatrix}
1 & \alpha & 0 & 0\\
0 & \beta & 0 & 0\\
0 & 0 & \gamma & 0\\
0 & 0 & 0 & \lambda
\end{bmatrix} \quad \beta, \gamma, \lambda > 0, \quad \alpha \in \mathbb{R}.$$
Assume that $\alpha = 0$, then based on the bijection $\varphi$, the symmetric positive definite matrix associated with $U$ is $M_1 = (U^{-1})^t(U^{-1}) = \operatorname{diag}\left\lbrace 1, \frac{1}{\beta^2}, \frac{1}{\gamma^2}, \frac{1}{\lambda^2}\right\rbrace$. If we set $\alpha \neq 0$, we obtain a metric with group of isometric automorphisms similar to that of $M_1$.
\begin{theorem}
The group of isometric automorphisms of $G_{4.4}$ is
$\operatorname{Aut}\left(G_{4.4}\right)_{M_1}  \cong \mathbb{Z}_2$.
\end{theorem}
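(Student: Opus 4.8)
The plan is to follow the same two-step strategy used for the previous Lie groups: first pin down the general shape of an element of $\operatorname{Aut}(\operatorname{A}_{4,4})$, and then intersect this group with the orthogonality condition imposed by the metric $M_1$. Because the structure of $\operatorname{A}_{4,4}$ is governed by a single Jordan block (the restriction of $\operatorname{ad} e_4$ to the abelian nilradical $\langle e_1, e_2, e_3\rangle$ is $-(I + N)$ with $N$ the nilpotent shift), I would first record, citing \cite{christodoulakis2003automorphisms}, that an automorphism must preserve the characteristic flag $\langle e_1\rangle \subset \langle e_1, e_2\rangle \subset \langle e_1, e_2, e_3\rangle$ and hence has the upper-triangular Toeplitz form
$$A = \begin{bmatrix} a_1 & a_2 & a_3 & a_4\\ 0 & a_1 & a_2 & a_8\\ 0 & 0 & a_1 & a_{12}\\ 0 & 0 & 0 & 1\end{bmatrix}, \qquad a_1 \neq 0.$$
The essential rigidity here, which I would verify directly from the brackets $[e_1, e_4] = e_1$, $[e_2, e_4] = e_1 + e_2$, $[e_3, e_4] = e_2 + e_3$, is that the three diagonal entries on the nilradical must coincide and the $(4,4)$-entry is forced to equal $1$.

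Next I would impose the isometry condition $A^t M_1 A = M_1$, where $M_1 = \operatorname{diag}\{1, \tfrac{1}{\beta^2}, \tfrac{1}{\gamma^2}, \tfrac{1}{\lambda^2}\}$. Reading the condition columnwise (the $(i,j)$-entry of $A^t M_1 A$ is the $M_1$-weighted inner product of the $i$-th and $j$-th columns of $A$), the first column equals $(a_1, 0, 0, 0)^t$; comparing the $(1,1)$, $(1,2)$, $(1,3)$ and $(1,4)$ entries immediately yields $a_1 = \pm 1$ and $a_2 = a_3 = a_4 = 0$. With these vanishing, the $(2,4)$ and $(3,4)$ entries then force $a_8 = a_{12} = 0$. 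Hence every off-diagonal parameter is killed and $A = \operatorname{diag}\{a_1, a_1, a_1, 1\}$ with $a_1 = \pm 1$.

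This leaves exactly the two matrices $I_4$ and $\operatorname{diag}\{-1, -1, -1, 1\}$, so $\operatorname{Aut}(G_{4.4})_{M_1} \cong \mathbb{Z}_2$. The only genuinely nontrivial step is the determination of the automorphism group in the first paragraph: the Jordan-block structure must be used to see that the diagonal on the nilradical is constant and $a_{16} = 1$, since a looser ansatz would spuriously enlarge the isometric automorphism group. Once the Toeplitz form is established the metric computation is entirely routine, and, as noted after the statement, taking $\alpha \neq 0$ only conjugates $M_1$ and therefore yields an isomorphic group.
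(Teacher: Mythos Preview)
Your proof is correct and follows exactly the paper's approach: cite the upper-triangular Toeplitz form of $\operatorname{Aut}(\operatorname{A}_{4,4})$ from \cite{christodoulakis2003automorphisms}, impose $A^t M_1 A = M_1$, and read off $a_1=\pm 1$ with all off-diagonal parameters vanishing. Your added remarks about the Jordan-block/flag structure and the columnwise reading of the orthogonality condition are helpful elaborations, but the argument is the same as the paper's.
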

\begin{proof}
The automorphism group of $\operatorname{A}_{4, 4} = \operatorname{Lie}(G_{4.4})$ consists of elements of the form \cite{christodoulakis2003automorphisms}
$$\begin{bmatrix}
a_1 & a_2 & a_3 & a_4\\
0 & a_1 & a_2 & a_8\\
0 & 0 & a_1 & a_{12}\\
0 & 0 & 0 & 1
\end{bmatrix}.$$
Let $A$ be an automorphism of the above form, then
$$A \in \operatorname{Aut}\left(G_{4.4}\right)_{M_1} \Leftrightarrow A^tM_1A = M_1 \Leftrightarrow a_2 = a_3 = a_4 = a_8 = a_{12} = 0, a_1 = \pm1.$$
Hence $\operatorname{Aut}\left(G_{4.4}\right)_{M_1} = \left\lbrace I_4, \operatorname{diag}\left\lbrace -1, -1, -1, 1\right\rbrace\right\rbrace \cong \mathbb{Z}_2$.
\end{proof}
\begin{corollary}
The group of isometries of $G_{4.4}$ is given by: $\operatorname{Isom}\left(G_{4.4}, M_1\right) \cong G_{4.4} \rtimes \mathbb{Z}_2$.
\end{corollary}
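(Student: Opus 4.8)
The plan is to follow the same matrix-theoretic strategy used throughout this section: combine the known shape of $\operatorname{Aut}(\operatorname{A}_{4,4})$ with the defining relation $A^t M_1 A = M_1$ for isometric automorphisms, exploiting that $M_1$ is diagonal with strictly positive entries. First I would recall from \cite{christodoulakis2003automorphisms} that every element of $\operatorname{Aut}(G_{4.4})$ has the Toeplitz-type upper triangular form
$$A = \begin{bmatrix}
a_1 & a_2 & a_3 & a_4\\
0 & a_1 & a_2 & a_8\\
0 & 0 & a_1 & a_{12}\\
0 & 0 & 0 & 1
\end{bmatrix},$$
and then impose $A^t M_1 A = M_1$ with $M_1 = \operatorname{diag}(1, 1/\beta^2, 1/\gamma^2, 1/\lambda^2)$.

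The key observation is that, since $M_1$ is diagonal, the equation $A^t M_1 A = M_1$ says precisely that the columns of $A$ are orthogonal with respect to the weighted inner product $\langle u, v\rangle = u^t M_1 v$, with prescribed squared norms equal to the diagonal entries of $M_1$. I would read off the constraints column by column, beginning with the last. The $(4,4)$-entry of $A^t M_1 A$ is $a_4^2 + a_8^2/\beta^2 + a_{12}^2/\gamma^2 + 1/\lambda^2$, which must equal $1/\lambda^2$; because every weight is positive, this sum of squares forces $a_4 = a_8 = a_{12} = 0$ at once. Once the last column is trivial, the matrix reduces to the block $\operatorname{diag}(B, 1)$ with $B$ the upper triangular $3\times 3$ Toeplitz block, and I would cascade through the diagonal entries: the first column gives $a_1^2 = 1$, hence $a_1 = \pm 1$; the second-column norm then gives $a_2^2 + a_1^2/\beta^2 = 1/\beta^2$, forcing $a_2 = 0$; and the third-column norm gives $a_3^2 + a_1^2/\gamma^2 = 1/\gamma^2$, forcing $a_3 = 0$. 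The off-diagonal relations of $A^t M_1 A = M_1$ are then satisfied automatically.

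Collecting these constraints leaves only $a_1 = \pm 1$ with all remaining free parameters zero, so that $\operatorname{Aut}(G_{4.4})_{M_1} = \{I_4, \operatorname{diag}(-1,-1,-1,1)\}$; a quick check confirms the nontrivial element is genuinely isometric, since it and $M_1$ are both diagonal, giving the claimed isomorphism with $\mathbb{Z}_2$. There is no real obstacle here, as the computation is entirely forced; the one point worth highlighting is the rigidity coming from the repeated parameter $a_1$ in the automorphism group. The Toeplitz shape ties the off-diagonal entries $a_2, a_3$ to the very scalar $a_1$ that already satisfies $|a_1| = 1$, so the positivity of the metric weights leaves them no freedom and simultaneously annihilates every nilpotent direction.
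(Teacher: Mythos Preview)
Your computation of $\operatorname{Aut}(G_{4.4})_{M_1}$ is correct and follows exactly the paper's approach: the paper writes down the same upper-triangular automorphism form from \cite{christodoulakis2003automorphisms}, imposes $A^t M_1 A = M_1$, and reads off $a_2 = a_3 = a_4 = a_8 = a_{12} = 0$, $a_1 = \pm 1$ in one line; your column-by-column argument is simply a more explicit version of that same verification. The only thing missing is the final sentence: the passage from $\operatorname{Aut}(G_{4.4})_{M_1} \cong \mathbb{Z}_2$ to $\operatorname{Isom}(G_{4.4}, M_1) \cong G_{4.4} \rtimes \mathbb{Z}_2$ is not automatic and relies on the structural theorem for simply connected solvable groups of type $(R)$ established earlier in the paper (so that $\operatorname{Isom}(G,g)_e = \operatorname{Aut}(G)_g$); you should cite that to close the argument.
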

\subsection{The isometry group of the Lie group $G_{4.5}^{\alpha, \beta}$}
By theorem 4.5. in \cite{aitbenhaddou2025classification}, any left-invariant Riemannian metric on $G_{4.5}^{\alpha, \beta}$ is equivalent, up to automorphism, to the following metric
$$U = \begin{bmatrix}
1 & \alpha & \beta & 0\\
0 & 1 & \gamma & 0\\
0 & 0 & 1 & 0\\
0 & 0 & 0 & \lambda
\end{bmatrix} \quad \lambda > 0, \quad \alpha, \beta \geq 0, \quad \gamma \in \mathbb{R}.$$
We distinguish three cases associated to this metric (the other cases are similar to these cases in the sens that the isometry group is isomorphic to the one of theses metrics).
\begin{enumerate}
\item If $\alpha = \beta = \gamma = 0$, then by the bijection $\varphi$, the symmetric positive definite matrix associated to $U$ is $M_1 = (U^{-1})^t(U^{-1}) = \operatorname{diag}\left\lbrace 1, 1, 1, \frac{1}{\lambda^2}\right\rbrace$.
\item If $\alpha > 0$ and $\beta = \gamma = 0$, then using the bijection $\varphi$, the symmetric positive definite matrix associated to $U$ is
$$M_2 = \left( U^{-1}\right)^t\left( U^{-1}\right) = \begin{bmatrix}
1 & -\alpha & 0 & 0\\
-\alpha & 1 + \alpha^2 & 0 & 0\\
0 & 0 & 1 & 0\\
0 & 0 & 0 & \frac{1}{\lambda^2}
\end{bmatrix}.$$
\item If $\alpha, \beta > 0$ and $\gamma \in \mathbb{R}$, then according to the map $\varphi$, the symmetric positive definite matrix associated to $U$ is
$$M_3 = (U^{-1})^t(U^{-1}) = \begin{bmatrix}
1 & -\alpha & \alpha\gamma - \beta & 0\\
\\
-\alpha & 1 + \alpha^2 & -\alpha(\alpha\gamma - \beta) - \gamma & 0\\
\\
\alpha\gamma - \beta & -\alpha(\alpha\gamma - \beta) - \gamma & (\alpha\gamma - \beta)^2 + \gamma^2 + 1 & 0\\
\\
0 & 0 & 0 & \frac{1}{\lambda^2}
\end{bmatrix}.$$
\end{enumerate}
\begin{theorem}
The group of isometric automorphisms of $G_{4.5}^{\alpha, \beta}$ is given by
\begin{eqnarray*}
\operatorname{Aut}\left(G_{4.5}^{\alpha, \beta}\right)_{M_1}  &\cong& (\mathbb{Z}_2)^3\\
\operatorname{Aut}\left(G_{4.5}^{\alpha, \beta}\right)_{M_2}  &\cong& (\mathbb{Z}_2)^2\\
\operatorname{Aut}\left(G_{4.5}^{\alpha, \beta}\right)_{M_3}  &\cong& \mathbb{Z}_2.
\end{eqnarray*}
\end{theorem}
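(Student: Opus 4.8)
The plan is to follow verbatim the strategy used in the preceding subsections: first record the general form of an automorphism of $\operatorname{A}_{4,5}^{\alpha,\beta}$, then cut it down using Remark~\ref{orthogonal}, and finally solve the matrix equation $A^t M_i A = M_i$ for each of the three metrics, treating $M_1$ (the diagonal one) as the maximal case and testing its stabilizer against $M_2$ and $M_3$. From \cite{christodoulakis2003automorphisms} I would recall that, in the generic parameter range where the structure eigenvalues $1,\alpha,\beta$ of $\operatorname{ad} e_4$ are pairwise distinct, the basis vectors $e_1,e_2,e_3$ are forced to be eigenvectors of every automorphism, so a generic automorphism is block-triangular of the form
$$A = \begin{bmatrix} a_1 & 0 & 0 & a_4 \\ 0 & a_6 & 0 & a_8 \\ 0 & 0 & a_{11} & a_{12} \\ 0 & 0 & 0 & 1 \end{bmatrix}.$$
In all three metrics the $e_4$-direction is orthogonal to the nilradical $\langle e_1,e_2,e_3\rangle$, so the $(i,4)$-components of $A^t M_i A = M_i$ (equivalently, the compactness forced by Remark~\ref{orthogonal}) give $a_1a_4 = a_6a_8 = a_{11}a_{12} = 0$ and hence $a_4 = a_8 = a_{12} = 0$. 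This reduces every candidate to a diagonal sign matrix $A = \operatorname{diag}\{\epsilon_1,\epsilon_2,\epsilon_3,1\}$ with $\epsilon_i = \pm 1$.

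For $M_1 = \operatorname{diag}\{1,1,1,\tfrac{1}{\lambda^2}\}$ the equation $A^t M_1 A = M_1$ is then automatic, yielding the eight elements $\operatorname{diag}\{\pm1,\pm1,\pm1,1\} \cong (\mathbb{Z}_2)^3$; this is the maximal isometric automorphism group. For $M_2$ and $M_3$ I would simply intersect this group with the stabilizer of each metric. Since $A$ is diagonal, the condition $A^t M_i A = M_i$ reads componentwise as $\epsilon_j \epsilon_k (M_i)_{jk} = (M_i)_{jk}$, so every \emph{nonzero} off-diagonal entry $(M_i)_{jk}$ of the upper $3\times 3$ block forces $\epsilon_j = \epsilon_k$. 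For $M_2$ the only such entry is $(M_2)_{12} = -\alpha \neq 0$, which forces $\epsilon_1 = \epsilon_2$ while $\epsilon_3$ stays free, leaving four elements, i.e. $(\mathbb{Z}_2)^2$. For $M_3$ the entries $-\alpha$, $\alpha\gamma - \beta$ and $-\alpha(\alpha\gamma-\beta)-\gamma$ generically link the indices $1,2,3$ together, forcing $\epsilon_1 = \epsilon_2 = \epsilon_3$ and leaving only $\{I_4, \operatorname{diag}\{-1,-1,-1,1\}\} \cong \mathbb{Z}_2$.

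The one delicate point I anticipate as the main obstacle is the genericity claim for $M_3$: one must ensure that the linking chain of off-diagonal entries never fully degenerates in the admissible range $\alpha,\beta > 0$, $\gamma \in \mathbb{R}$. The entry $(M_3)_{12} = -\alpha$ is always nonzero, so $\epsilon_1 = \epsilon_2$ is unconditional; the only risk is $(M_3)_{13} = \alpha\gamma - \beta = 0$. I would dispose of this sub-case by observing that when $\alpha\gamma = \beta$ the entry $(M_3)_{23} = -\alpha(\alpha\gamma-\beta)-\gamma = -\gamma = -\beta/\alpha$ is nonzero (as $\alpha,\beta>0$), so index $2$ is still tied to index $3$ and the conclusion $\epsilon_1=\epsilon_2=\epsilon_3$, hence $\mathbb{Z}_2$, is unchanged. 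A secondary caveat worth noting is the assumption that the structure eigenvalues $1,\alpha,\beta$ are distinct; if two coincide, the cited automorphism form acquires an extra off-diagonal block, which would have to be handled separately, but in that situation the orthogonality reduction together with the diagonal structure of $M_1$ still collapses the analysis to the same sign-matrix bookkeeping.
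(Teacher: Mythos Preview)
Your proposal is correct and follows essentially the same route as the paper: cite the block-triangular form of $\operatorname{Aut}(\operatorname{A}_{4,5}^{\alpha,\beta})$ from \cite{christodoulakis2003automorphisms}, use the equation $A^tM_1A=M_1$ to reduce to the eight diagonal sign matrices $\operatorname{diag}\{\pm1,\pm1,\pm1,1\}$, and then intersect this maximal group with the stabilisers of $M_2$ and $M_3$. Your explicit handling of the sub-case $\alpha\gamma=\beta$ in $M_3$ (where $(M_3)_{13}$ vanishes but $(M_3)_{23}=-\gamma=-\beta/\alpha\neq0$ still ties $\epsilon_2$ to $\epsilon_3$) is a detail the paper leaves implicit, and your caveat about coincident structure eigenvalues is a legitimate side remark that the paper likewise absorbs into the generic citation.
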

\begin{proof}
The automorphism group of $\operatorname{A}_{4, 5}^{\alpha, \beta} = \operatorname{Lie}(G_{4.5}^{\alpha, \beta})$ consists of elements of the form \cite{christodoulakis2003automorphisms}
$$\begin{bmatrix}
a_1 & 0 & 0 & a_4\\
0 & a_6 & 0& a_8\\
0 & 0 & a_{11} & a_{12}\\
0 & 0 & 0 & 1
\end{bmatrix}.$$
Let $A$ be an automorphism of the above form, then
\begin{eqnarray*}
A \in \operatorname{Aut}\left(G_{4.5}^{\alpha, \beta}\right)_{M_1} &\Leftrightarrow& A^tM_1A = M_1\\
&\Leftrightarrow& a_4 = a_8 = a_{12} = 0, a_1 = \pm1, a_6 = \pm1, a_{11} = \pm1.
\end{eqnarray*}
Hence $\operatorname{Aut}\left(G_{4.5}^{\alpha, \beta}\right)_{M_1} = \operatorname{diag}\left\lbrace \pm1, \pm1, \pm1, 1\right\rbrace \cong \left(\mathbb{Z}_2\right)^3$.\\
For the metric $M_2$, it suffices to verify which of the elements of $\operatorname{Aut}\left(G_{4.5}^{\alpha, \beta}\right)_{M_1}$ preserve $M_2$. We obtain that
\begin{eqnarray*}
\operatorname{Aut}\left(G_{4.5}^{\alpha, \beta}\right)_{M_2} &=& \left\lbrace I_4, \begin{bmatrix}
1 & 0 & 0 & 0\\
0 & 1 & 0 & 0\\
0 & 0 & -1 & 0\\
0 & 0 & 0 & 1
\end{bmatrix}, \begin{bmatrix}
-1 & 0 & 0 & 0\\
0 & -1 & 0 & 0\\
0 & 0 & 1 & 0\\
0 & 0 & 0 & 1
\end{bmatrix}, \begin{bmatrix}
-1 & 0 & 0 & 0\\
0 & -1 & 0 & 0\\
0 & 0 & -1 & 0\\
0 & 0 & 0 & 1
\end{bmatrix} \right\rbrace\\
&\cong& (\mathbb{Z}_2)^2.
\end{eqnarray*}
Finally, one gets that
$$A \in \operatorname{Aut}\left(G_{4.5}^{\alpha, \beta}\right)_{M_3} \Leftrightarrow A^tM_3A = M_3 \Leftrightarrow A \in \left\lbrace I_4, \begin{bmatrix}
-1 & 0 & 0 & 0\\
0 & -1 & 0 & 0\\
0 & 0 & -1 & 0\\
0 & 0 & 0 & 1
\end{bmatrix} \right\rbrace \cong \mathbb{Z}_2.$$
\end{proof}
\begin{corollary}
The isometry group of $G_{4.5}^{\alpha, \beta}$ is given by
\begin{eqnarray*}
\operatorname{Isom}\left(G_{4.5}^{\alpha, \beta}, M_1\right)  &\cong& G_{4.5}^{\alpha, \beta} \rtimes (\mathbb{Z}_2)^3\\
\operatorname{Isom}\left(G_{4.5}^{\alpha, \beta}, M_2\right)  &\cong& G_{4.5}^{\alpha, \beta} \rtimes (\mathbb{Z}_2)^2\\
\operatorname{Isom}\left(G_{4.5}^{\alpha, \beta}, M_3\right)  &\cong& G_{4.5}^{\alpha, \beta} \rtimes \mathbb{Z}_2.
\end{eqnarray*}
\end{corollary}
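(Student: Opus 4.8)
The plan is to follow the two-step strategy of the preceding subsections: reduce a general automorphism to diagonal form via the orthogonality constraint, then read off which sign patterns preserve each metric. First I would recall from \cite{christodoulakis2003automorphisms} that every element of $\operatorname{Aut}(\operatorname{A}_{4,5}^{\alpha,\beta})$ has the form
$$A = \begin{bmatrix} a_1 & 0 & 0 & a_4 \\ 0 & a_6 & 0 & a_8 \\ 0 & 0 & a_{11} & a_{12} \\ 0 & 0 & 0 & 1 \end{bmatrix},$$
so that an isometric automorphism is precisely one satisfying $A^t M_i A = M_i$.

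The first step, common to all three metrics, is to force the last column to be trivial. Each $M_i$ is block diagonal, with $(4,4)$-entry $1/\lambda^2$ and no coupling between the first three coordinates and the fourth. Writing $w = (a_4, a_8, a_{12})^t$ and letting $P_i$ denote the (positive definite) leading $3\times 3$ block of $M_i$, the $(4,4)$-entry of $A^t M_i A$ equals $w^t P_i w + 1/\lambda^2$; equating it with $(M_i)_{44} = 1/\lambda^2$ forces $w^t P_i w = 0$, whence $a_4 = a_8 = a_{12} = 0$ by positive definiteness. This is the orthogonality constraint $\operatorname{Aut}(G)_g \subset \operatorname{O}(4)$ of Remark~\ref{orthogonal} made explicit. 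After this reduction $A = \operatorname{diag}\{a_1, a_6, a_{11}, 1\}$, and the equation $A^t M_i A = M_i$ collapses to the entrywise requirement $a_p a_q (M_i)_{pq} = (M_i)_{pq}$ for all $p,q$; that is, $a_p a_q = 1$ must hold at each position where $(M_i)_{pq}\neq 0$, while the positive diagonal only yields $a_p = \pm 1$.

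Reading this off is immediate for the first two metrics. For the diagonal $M_1$ no off-diagonal constraint appears, so $a_1, a_6, a_{11}$ range freely over $\{\pm 1\}$ and $\operatorname{Aut}(G_{4.5}^{\alpha,\beta})_{M_1}\cong(\mathbb{Z}_2)^3$. For $M_2$ the only nonzero off-diagonal entry is $(M_2)_{12} = -\alpha \neq 0$ (here $\alpha > 0$), which forces $a_1 a_6 = 1$, i.e. $a_1 = a_6$, while $a_{11}$ stays free; this gives $(\mathbb{Z}_2)^2$, realized by the four listed sign matrices.

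The delicate case is $M_3$, where I would run the coupling analysis carefully because some off-diagonal entries may vanish. The entry $(M_3)_{12} = -\alpha$ is always nonzero (as $\alpha > 0$), so again $a_1 = a_6$. To pin down $a_{11}$ I would split on the sign-coupling entries in the third column: if $\alpha\gamma - \beta \neq 0$ then $(M_3)_{13} = \alpha\gamma - \beta \neq 0$ gives $a_1 a_{11} = 1$; if instead $\alpha\gamma - \beta = 0$, then $(M_3)_{23} = -\alpha(\alpha\gamma-\beta) - \gamma = -\beta/\alpha \neq 0$ (using $\beta > 0$) gives $a_6 a_{11} = 1$. Either way $a_{11}$ is tied to the common value of $a_1 = a_6$, forcing $a_1 = a_6 = a_{11} = \pm 1$, so $\operatorname{Aut}(G_{4.5}^{\alpha,\beta})_{M_3} = \{I_4, \operatorname{diag}\{-1,-1,-1,1\}\}\cong\mathbb{Z}_2$. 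The main obstacle is exactly this vanishing-coefficient bookkeeping for $M_3$; everything else is routine matrix multiplication.
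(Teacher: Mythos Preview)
Your proposal is correct and follows essentially the same route as the paper: write down the general automorphism, solve $A^tM_iA=M_i$ to get $a_4=a_8=a_{12}=0$ and then the sign constraints, and finally invoke the type-$(R)$ structure theorem of Section~2 to pass from $\operatorname{Aut}(G)_{M_i}$ to $\operatorname{Isom}(G,M_i)\cong G\rtimes\operatorname{Aut}(G)_{M_i}$. Your write-up is in fact more explicit than the paper's—the positive-definiteness argument for killing the last column and the $\alpha\gamma-\beta$ case split for $M_3$ spell out what the paper leaves as a one-line assertion—though you should add the final sentence invoking that Section~2 theorem, since the statement you are proving is about $\operatorname{Isom}$, not $\operatorname{Aut}$.
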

\subsection{The isometry group of the Lie group $G_{4.7}$}
By theorem 4.7. in \cite{aitbenhaddou2025classification}, any left-invariant Riemannian metric on $G_{4.7}$ is equivalent, up to automorphism, to the following metric
$$U = \begin{bmatrix}
\alpha & \beta & \gamma & 0\\
0 & 1 & 0 & 0\\
0 & 0 & \lambda & 0\\
0 & 0 & 0 & \mu
\end{bmatrix} \quad \alpha, \lambda, \mu > 0, \quad \beta \geq 0, \quad \gamma \in \mathbb{R}.$$
We distinguich two cases associated with this metric
\begin{enumerate}
\item If $\beta = \gamma = 0$, then based on the bijection $\varphi$, the symmetric positive definite matrix associated with $U$ is $M_1 = (U^{-1})^t(U^{-1}) = \operatorname{diag}\left\lbrace \frac{1}{\alpha^2}, 1, \frac{1}{\lambda^2}, \frac{1}{\mu^2}\right\rbrace$.
\item If $\beta \neq 0$ and $\gamma = 0$, then using the map $\varphi$, the symmetric positive definite matrix associated with $U$ is 
$$M_2 = (U^{-1})^t(U^{-1}) = \begin{bmatrix}
\frac{1}{\alpha^2} & \frac{-\beta}{\alpha^2} & 0 & 0\\
\frac{-\beta}{\alpha^2} & 1 + \frac{\beta^2}{\alpha^2} & 0 & 0\\
0 & 0 & \frac{1}{\lambda^2} & 0\\
0 & 0 & 0 & \frac{1}{\mu^2}
\end{bmatrix}.$$
\end{enumerate}
\begin{theorem}
The group of isometric automorphisms of $G_{4.7}$ is given by:\\
$\operatorname{Aut}\left(G_{4.7}\right)_{M_1}  \cong \mathbb{Z}_2, \qquad \operatorname{Aut}\left(G_{4.7}\right)_{M_2} = \left\lbrace I_4\right\rbrace$.
\end{theorem}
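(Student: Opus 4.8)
The plan is to follow the three-step scheme already established for the earlier Lie groups in this section. First I would record the general form of an element $A \in \operatorname{Aut}(\operatorname{A}_{4,7})$ from \cite{christodoulakis2003automorphisms}. The relevant structural features are that $\langle e_1\rangle = [\mathfrak{g}',\mathfrak{g}']$ and $\langle e_1,e_2,e_3\rangle = \mathfrak{g}'$ are characteristic ideals (the latter carrying a Heisenberg bracket via $[e_2,e_3]=e_1$), so any automorphism is block upper-triangular along the flag $\langle e_1\rangle \subset \langle e_1,e_2,e_3\rangle \subset \mathfrak{g}$, and the remaining brackets impose rigid couplings among its diagonal entries.

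Next I would apply Remark \ref{orthogonal} to restrict to $\operatorname{Aut}(G_{4.7})_g \subset \operatorname{O}(4)$. Orthogonality eliminates the strictly upper-triangular (nilpotent) part of a generic automorphism, reducing the candidate matrices to diagonal sign matrices $\operatorname{diag}\{s_1,s_2,s_3,s_4\}$ with $s_i=\pm 1$, subject to the automorphism constraints. Reading the brackets directly, $[e_1,e_4]=2e_1$ forces $s_4=1$; the mixed bracket $[e_3,e_4]=e_2+e_3$ forces $s_2=s_3$; and $[e_2,e_3]=e_1$ then yields $s_1=s_2 s_3 = 1$. Hence the only admissible orthogonal automorphisms are $I_4$ and $\operatorname{diag}\{1,-1,-1,1\}$.

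For the diagonal metric $M_1$, both of these trivially satisfy $A^t M_1 A = M_1$, so $\operatorname{Aut}(G_{4.7})_{M_1}\cong\mathbb{Z}_2$. For $M_2$, which introduces the off-diagonal entry $-\beta/\alpha^2$ with $\beta\neq 0$ in the $(e_1,e_2)$-block, it suffices to test the nontrivial involution $\operatorname{diag}\{1,-1,-1,1\}$: conjugating $M_2$ by it sends the $(1,2)$-entry to $+\beta/\alpha^2 \neq -\beta/\alpha^2$, so the involution fails and only $I_4$ survives, giving $\operatorname{Aut}(G_{4.7})_{M_2}=\{I_4\}$.

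The hard part will be the first step. Because $\operatorname{A}_{4,7}$ is indecomposable with a Heisenberg nilradical, the sign couplings forced by $[e_2,e_3]=e_1$ and by the $e_2+e_3$ term in $[e_3,e_4]$ must be tracked carefully, so that no spurious isometric automorphism is admitted and none is wrongly discarded; once the admissible sign matrices are correctly pinned down, the metric computations $A^t M_i A = M_i$ are immediate.
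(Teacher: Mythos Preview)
Your overall strategy matches the paper's: start from the explicit automorphism form of $\operatorname{A}_{4,7}$, impose $A^t M_i A = M_i$, and read off the surviving elements. Your bracket analysis on diagonal sign matrices is correct and yields exactly the two candidates $I_4$ and $\operatorname{diag}\{1,-1,-1,1\}$, and your checks against $M_1$ and $M_2$ are accurate.

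There is, however, a genuine gap in the $M_2$ step. Your reduction ``orthogonality eliminates the strictly upper-triangular part'' is valid for a \emph{diagonal} Gram matrix such as $M_1$ (an upper-triangular matrix in the standard $\operatorname{O}(4)$ is diagonal), but it is \emph{not} valid for the non-diagonal $M_2$. Concretely, writing $M_2=(U^{-1})^tU^{-1}$ with $U$ upper triangular, the $M_2$-orthogonal upper-triangular matrices are exactly $U D U^{-1}$ with $D=\operatorname{diag}(\pm1,\pm1,\pm1,\pm1)$; for $D=\operatorname{diag}(1,-1,-1,1)$ this gives
\[
U D U^{-1}=\begin{bmatrix}1&-2\beta&0&0\\0&-1&0&0\\0&0&-1&0\\0&0&0&1\end{bmatrix},
\]
which is \emph{not} diagonal. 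So checking only the diagonal involution $\operatorname{diag}\{1,-1,-1,1\}$ against $M_2$ does not, by itself, exclude non-diagonal involutive automorphisms preserving $M_2$. The paper avoids this by computing $A^tM_2A=M_2$ directly for the full automorphism form (with parameters $a_4,a_6,a_7,a_8,a_{12}$) rather than pre-reducing to diagonal candidates. You can close the gap the same way, or alternatively note that the matrix above fails the automorphism constraint: its $(3,4)$-entry is $0$, forcing $a_{12}=0$, whence the automorphism form requires $(1,2)$-entry $-a_{12}a_6=0\neq -2\beta$. Either route gives $\operatorname{Aut}(G_{4.7})_{M_2}=\{I_4\}$.
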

\begin{proof}
The automorphism group of $\operatorname{A}_{4, 7} = \operatorname{Lie}(G_{4.7})$ consists of elements of the form \cite{christodoulakis2003automorphisms}
$$\begin{bmatrix}
a_6^2 & -a_{12}a_6 & -a_{12}(a_6 + a_7) + a_6a_8 & a_4\\
0 & a_6 & a_7 & a_8\\
0 & 0 & a_6 & a_{12}\\
0 & 0 & 0 & 1
\end{bmatrix}.$$
Let $A$ be an automorphism of the above form, then
$$A \in \operatorname{Aut}\left(G_{4.7}\right)_{M_1} \Leftrightarrow A^tM_1A = M_1 \Leftrightarrow a_4 = a_7 = a_8 = a_{12} = 0, a_6 = \pm1.$$
Hence $\operatorname{Aut}\left(G_{4.7}\right)_{M_1} = \left\lbrace I_4, \operatorname{diag}\left\lbrace 1, -1, -1, 1\right\rbrace\right\rbrace \cong \mathbb{Z}_2$.\\
For the metric $M_2$, one gets that $A \in \operatorname{Aut}\left(G_{4.7}\right)_{M_2} \Leftrightarrow A^tM_2A = M_2 \Leftrightarrow A = I_4$.
\end{proof}
\begin{corollary}
The group of isometries of $G_{4.7}$ is given by:\\ $\operatorname{Isom}\left(G_{4.7}, M_1\right) \cong G_{4.7} \rtimes \mathbb{Z}_2, \quad \operatorname{Isom}\left(G_{4.7}, M_2\right) \cong G_{4.7}$.
\end{corollary}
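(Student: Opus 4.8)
The plan is to obtain this corollary as an immediate consequence of the structural theorem of Section~2, which asserts that for a simply connected solvable Lie group $G$ of type $(R)$ and any left-invariant Riemannian metric $g$, one has $\operatorname{Isom}(G, g) \cong G \rtimes \operatorname{Aut}(G)_g$. The entire argument reduces to checking that $G_{4.7}$ satisfies the hypotheses of that theorem and then substituting the isometric automorphism groups computed in the preceding theorem.

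First I would verify the hypotheses. The group $G_{4.7}$ is simply connected by its explicit matrix realization in Section~3, and its Lie algebra $\operatorname{A}_{4,7}$ is solvable by the remark following Table~1. The essential point is that $\operatorname{A}_{4,7}$ is of type $(R)$: by the Proposition of Section~2, the only four-dimensional nonunimodular Lie algebras of Table~1 that fail to be of type $(R)$ are $\operatorname{A}_{3,7}^{\alpha} \oplus \operatorname{A}_1$, $\operatorname{A}_{4,6}^{\alpha,\beta}$, $\operatorname{A}_{4,11}^{\alpha}$ and $\operatorname{A}_{4,12}$, and $\operatorname{A}_{4,7}$ is not on this list. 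Consequently the structural theorem applies and yields $\operatorname{Isom}(G_{4.7}, g) \cong G_{4.7} \rtimes \operatorname{Aut}(G_{4.7})_g$ for every left-invariant metric $g$.

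Next I would substitute the two computed cases. The preceding theorem gives $\operatorname{Aut}(G_{4.7})_{M_1} \cong \mathbb{Z}_2$ and $\operatorname{Aut}(G_{4.7})_{M_2} = \{I_4\}$. Plugging these into the semidirect-product decomposition produces $\operatorname{Isom}(G_{4.7}, M_1) \cong G_{4.7} \rtimes \mathbb{Z}_2$ directly, while $\operatorname{Isom}(G_{4.7}, M_2) \cong G_{4.7} \rtimes \{I_4\} \cong G_{4.7}$, since a semidirect product with the trivial group collapses to $G_{4.7}$ itself. Because metrics related by an automorphism have conjugate, hence isomorphic, isometry groups, and the classification in \cite{aitbenhaddou2025classification} reduces every left-invariant metric on $G_{4.7}$ to a representative of type $M_1$ or $M_2$, these two cases are exhaustive.

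The calculation presents essentially no obstacle once the preceding theorem is granted; the only step demanding genuine care is confirming that $\operatorname{A}_{4,7}$ really belongs to the type~$(R)$ class. This is the crux, because the structural theorem is exactly what allows us to replace the abstract isotropy group $\operatorname{Isom}(G_{4.7}, g)_e$ by the concretely computable group $\operatorname{Aut}(G_{4.7})_g$ of isometric automorphisms, and it would fail for the four exceptional algebras excluded above.
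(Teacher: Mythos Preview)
Your proposal is correct and follows exactly the paper's approach: the corollary is an immediate consequence of the structural theorem $\operatorname{Isom}(G,g)\cong G\rtimes\operatorname{Aut}(G)_g$ for simply connected solvable type~$(R)$ groups, combined with the preceding computation of $\operatorname{Aut}(G_{4.7})_{M_i}$. The paper does not even write out a separate proof for this corollary, treating it as self-evident from those two ingredients.
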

\subsection{The isometry group of the Lie group $G_{4.8}^{\alpha}$}
Since there is a parameter $\beta$ on the automorphisms of $\operatorname{A}_{4, 9}^{\beta} = \operatorname{Lie}(G_{4.8}^{\alpha})$; We will not use $\beta$ in the parameters of the metrics on $G_{4.8}^{\alpha}$. Note that there is no relationship between the parameters used in the names of our Lie groups and the parameters on the metrics. By theorem 4.8. in \cite{aitbenhaddou2025classification}, any left-invariant Riemannian metric on $G_{4.8}^{\alpha}$ is equivalent, up to automorphism, to the following metric
$$U = \begin{bmatrix}
\alpha & \eta & \gamma  & 0\\
0 & 1 & \lambda & 0\\
0 & 0 & 1 & 0\\
0 & 0 & 0 & \mu
\end{bmatrix} \quad \alpha, \mu > 0, \quad \eta, \gamma \geq 0, \quad \lambda \in \mathbb{R}.$$
We distinguish three cases associated to this metric
\begin{enumerate}
\item If $\eta = \gamma = \lambda = 0$, by the map $\varphi$, the symmetric positive definite matrix associated to $U$ is $M_1 = (U^{-1})^t(U^{-1}) = \operatorname{diag}\left\lbrace \frac{1}{\alpha^2}, 1, 1, \frac{1}{\mu^2}\right\rbrace$.
\item If $\eta > 0$ and $\gamma = \lambda = 0$, then using the bijection $\varphi$, the symmetric positive definite matrix associated to $U$ is
$$M_2 = \left( U^{-1}\right)^t\left( U^{-1}\right) = \begin{bmatrix}
\frac{1}{\alpha^2} & \frac{-\eta}{\alpha^2} & 0 & 0\\
\frac{-\eta}{\alpha^2} & 1 + \frac{\eta^2}{\alpha^2} & 0 & 0\\
0 & 0 & 1 & 0\\
0 & 0 & 0 & \frac{1}{\mu^2}
\end{bmatrix}.$$
\item If $\eta > 0$, $\lambda \neq 0$ and $\gamma = 0$, by $\varphi$, the symmetric positive definite matrix associated to $U$ is
$$M_3 = \left( U^{-1}\right)^t\left( U^{-1}\right) = \begin{bmatrix}
\frac{1}{\alpha^2} & \frac{-\eta}{\alpha^2} & \frac{\eta\lambda}{\alpha^2} & 0\\
\\
\frac{-\eta}{\alpha^2} & 1 + \frac{\eta^2}{\alpha^2} & \frac{-\eta^2\lambda}{\alpha^2} - \lambda & 0\\
\\
\frac{\eta\lambda}{\alpha^2} & \frac{-\eta^2\lambda}{\alpha^2} - \lambda & 1 & 0\\
\\
0 & 0 & 0 & \frac{1}{\mu^2}
\end{bmatrix}.$$
\end{enumerate}
\begin{theorem}
The group of isometric automorphisms of $G_{4.8}^{\alpha}$ is given by
\begin{eqnarray*}
\operatorname{Aut}\left(G_{4.8}^{\alpha}\right)_{M_1}  &\cong& (\mathbb{Z}_2)^2\\
\operatorname{Aut}\left(G_{4.8}^{\alpha}\right)_{M_2}  &\cong& \mathbb{Z}_2\\
\operatorname{Aut}\left(G_{4.8}^{\alpha}\right)_{M_3}  &=& \left\lbrace I_4\right\rbrace.
\end{eqnarray*}
\end{theorem}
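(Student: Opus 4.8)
The plan is to reproduce the strategy of the preceding subsections, adapted to the three metrics $M_1$, $M_2$, $M_3$ on $G_{4.8}^{\alpha}$. First I would recall from \cite{christodoulakis2003automorphisms} the explicit matrix form of a general automorphism of $\operatorname{A}_{4, 9}^{\beta} = \operatorname{Lie}(G_{4.8}^{\alpha})$, which is (block) upper triangular with entries constrained by the brackets $[e_2, e_3] = e_1$, $[e_2, e_4] = e_2$, $[e_3, e_4] = \beta e_3$, and $[e_1, e_4] = (1 + \beta)e_1$. In particular, a diagonal automorphism $\operatorname{diag}\left\lbrace s_1, s_2, s_3, s_4\right\rbrace$ must satisfy $s_4 = 1$ and $s_1 = s_2 s_3$, coming respectively from $[e_2, e_4] = e_2$ and $[e_2, e_3] = e_1$. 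By Remark \ref{orthogonal} every isometric automorphism already lies in $\operatorname{O}(4)$, so the task reduces to solving $A^t M_i A = M_i$ for $A$ of the cited form.

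For $M_1 = \operatorname{diag}\left\lbrace \frac{1}{\alpha^2}, 1, 1, \frac{1}{\mu^2}\right\rbrace$ I would solve $A^t M_1 A = M_1$ directly: since $M_1$ is diagonal, $M_1$-orthogonality forces the upper off-diagonal entries of $A$ to vanish and the surviving diagonal entries to be $\pm 1$, so that, after imposing the two automorphism relations above, $A$ runs over the four sign patterns $\left\lbrace I_4, \operatorname{diag}\left\lbrace -1, -1, 1, 1\right\rbrace, \operatorname{diag}\left\lbrace -1, 1, -1, 1\right\rbrace, \operatorname{diag}\left\lbrace 1, -1, -1, 1\right\rbrace\right\rbrace$, giving the maximal group $\operatorname{Aut}(G_{4.8}^{\alpha})_{M_1} \cong (\mathbb{Z}_2)^2$. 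With this maximal group in hand the remaining two cases become finite checks. For $M_2$, whose only new entry sits in position $(1,2)$, the equation $A^t M_2 A = M_2$ is preserved exactly by the patterns with $s_1 = s_2$; this leaves $\left\lbrace I_4, \operatorname{diag}\left\lbrace -1, -1, 1, 1\right\rbrace\right\rbrace \cong \mathbb{Z}_2$. For $M_3$ the additional entries in positions $(1,3)$ and $(2,3)$ (produced by $\lambda \neq 0$) are reversed in sign by $\operatorname{diag}\left\lbrace -1, -1, 1, 1\right\rbrace$, so only the identity survives and $\operatorname{Aut}(G_{4.8}^{\alpha})_{M_3} = \left\lbrace I_4\right\rbrace$.

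The step I expect to be the main obstacle is the $M_1$ computation, and specifically ruling out a continuous family of isometric automorphisms in the plane $\langle e_2, e_3\rangle$. Metrically the $(e_2, e_3)$-block of $M_1$ is the identity, so in isolation it would be invariant under a whole copy of $\operatorname{O}(2)$; what prevents such rotations from being automorphisms is that $\operatorname{ad} e_4$ acts on $\langle e_2\rangle$ and $\langle e_3\rangle$ with the distinct real eigenvalues $1$ and $\beta$, so that $e_2$ and $e_3$ are geometrically distinguished and no genuine mixing is allowed, leaving only the discrete signs. Justifying this distinction carefully, rather than merely reading it off the cited automorphism matrix, is the delicate point; once it is settled, the passages to $M_2$ and $M_3$ reduce to routine sign bookkeeping.
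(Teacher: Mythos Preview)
Your proposal is correct and follows essentially the same route as the paper: cite the explicit automorphism form from \cite{christodoulakis2003automorphisms}, solve $A^t M_1 A = M_1$ to obtain the maximal finite group $\operatorname{diag}\{a_6 a_{11},\, a_6,\, a_{11},\, 1\}$ with $a_6, a_{11} \in \{\pm 1\}$, and then check by sign bookkeeping which of these four matrices survive for $M_2$ and $M_3$. Your extra remark explaining \emph{why} no continuous $\operatorname{O}(2)$ appears in the $\langle e_2, e_3\rangle$-block (the eigenvalues $1$ and $\beta$ of $\operatorname{ad} e_4$ are distinct) is a nice gloss that the paper simply absorbs into the citation of the automorphism group, but it does not change the argument.
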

\begin{proof}
The automorphism group of $\operatorname{A}_{4, 9}^{\beta} = \operatorname{Lie}(G_{4.8}^{\alpha})$ consists of elements of the form \cite{christodoulakis2003automorphisms}
$$\begin{bmatrix}
a_{11}a_6 & -a_{12}a_6/\beta & a_8a_{11}& a_4\\
0 & a_6 & 0 & a_8\\
0 & 0 & a_{11} & a_{12}\\
0 & 0 & 0 & 1
\end{bmatrix}.$$
Let $A$ be an automorphism of the above form, then
\begin{eqnarray*}
A \in \operatorname{Aut}\left(G_{4.8}^{\alpha}\right)_{M_1} &\Leftrightarrow& A^tM_1A = M_1\\
&\Leftrightarrow& a_4 = a_8 = a_{12} = 0, a_6 = \pm1, a_{11} = \pm1.
\end{eqnarray*}
Hence 
\begin{eqnarray*}
\operatorname{Aut}\left(G_{4.8}^{\alpha}\right)_{M_1} &=& \left\lbrace I_4, \begin{bmatrix}
-1 & 0 & 0 & 0\\
0 & 1 & 0 & 0\\
0 & 0 & -1 & 0\\
0 & 0 & 0 & 1
\end{bmatrix}, \begin{bmatrix}
-1 & 0 & 0 & 0\\
0 & -1 & 0 & 0\\
0 & 0 & 1 & 0\\
0 & 0 & 0 & 1
\end{bmatrix}, \begin{bmatrix}
1 & 0 & 0 & 0\\
0 & -1 & 0 & 0\\
0 & 0 & -1 & 0\\
0 & 0 & 0 & 1
\end{bmatrix} \right\rbrace\\
&\cong& (\mathbb{Z}_2)^2.
\end{eqnarray*}
For the metric $M_2$, we obtain that
$$A \in \operatorname{Aut}\left(G_{4.8}^{\alpha}\right)_{M_2} \Leftrightarrow A^tM_2A = M_2 \Leftrightarrow A \in \left\lbrace I_4, \begin{bmatrix}
-1 & 0 & 0 & 0\\
0 & -1 & 0 & 0\\
0 & 0 & 1 & 0\\
0 & 0 & 0 & 1
\end{bmatrix} \right\rbrace \cong \mathbb{Z}_2.$$
Finally, one can see that $A \in \operatorname{Aut}\left(G_{4.8}^{\alpha}\right)_{M_3} \Leftrightarrow A^tM_3A = M_3 \Leftrightarrow A \in \left\lbrace I_4 \right\rbrace.$
\end{proof}
\begin{corollary}
The isometry group of $G_{4.8}^{\alpha}$ is given by
\begin{eqnarray*}
\operatorname{Isom}\left(G_{4.8}^{\alpha}, M_1\right)  &\cong& G_{4.8}^{\alpha} \rtimes (\mathbb{Z}_2)^2\\
\operatorname{Isom}\left(G_{4.8}^{\alpha}, M_2\right)  &\cong& G_{4.8}^{\alpha} \rtimes \mathbb{Z}_2\\
\operatorname{Isom}\left(G_{4.8}^{\alpha}, M_3\right)  &\cong& G_{4.8}^{\alpha}.
\end{eqnarray*}
\end{corollary}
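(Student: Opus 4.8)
The plan is to derive this corollary directly from the structural theorem of the Preliminaries—the one asserting that for a simply connected solvable Lie group $G$ of type $(R)$ one has $\operatorname{Isom}(G, g) \cong G \rtimes \operatorname{Aut}(G)_g$ for every left-invariant metric $g$—combined with the isotropy computations already carried out in the preceding theorem. First I would verify that $G_{4.8}^{\alpha}$ satisfies the hypotheses of that structural theorem. Its Lie algebra $\operatorname{A}_{4,9}^{\beta}$ is listed in Table 1 and is solvable; by the Proposition it is of type $(R)$, since it is not one of the four exceptional algebras $\operatorname{A}_{3,7}^{\alpha} \oplus \operatorname{A}_1$, $\operatorname{A}_{4,6}^{\alpha,\beta}$, $\operatorname{A}_{4,11}^{\alpha}$, $\operatorname{A}_{4,12}$. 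Simple connectedness is built into the matrix realization recorded in Section~3.

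Once these hypotheses are in place, the structural theorem gives, for each left-invariant Riemannian metric $g$ on $G_{4.8}^{\alpha}$,
\[
\operatorname{Isom}(G_{4.8}^{\alpha}, g) \cong G_{4.8}^{\alpha} \rtimes \operatorname{Aut}(G_{4.8}^{\alpha})_g ,
\]
so it remains only to substitute the three isotropy groups computed just above. For $g = M_1$ one has $\operatorname{Aut}(G_{4.8}^{\alpha})_{M_1} \cong (\mathbb{Z}_2)^2$, yielding the first line; for $g = M_2$ one has $\operatorname{Aut}(G_{4.8}^{\alpha})_{M_2} \cong \mathbb{Z}_2$, yielding the second; and for $g = M_3$ the isotropy group is trivial, $\operatorname{Aut}(G_{4.8}^{\alpha})_{M_3} = \{I_4\}$, so the semidirect product collapses and $\operatorname{Isom}(G_{4.8}^{\alpha}, M_3) \cong G_{4.8}^{\alpha}$.

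Finally I would invoke the general remark of Section~5: every left-invariant metric on $G_{4.8}^{\alpha}$ is, up to automorphism, one of the representatives $M_1, M_2, M_3$, and automorphism-equivalent metrics have conjugate—hence isomorphic—isometry groups, so the three displayed lines are exhaustive. I expect no genuine obstacle here, because the substantive work lies elsewhere: in the structural theorem itself (where type $(R)$, via Gordon's corollary and Helgason's rigidity lemma, forces $\operatorname{Isom}(G,g)_e = \operatorname{Aut}(G)_g$ and thus the normality of $L(G)$) and in the explicit matrix identification of the three isotropy groups. The only fine point worth stating is the last one, namely that a semidirect product with trivial acting factor reduces to its base group, giving $\operatorname{Isom}(G_{4.8}^{\alpha}, M_3) \cong G_{4.8}^{\alpha}$.
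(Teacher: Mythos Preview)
Your proposal is correct and follows exactly the paper's implicit argument: the corollary is stated without proof in the paper precisely because it is the immediate consequence of the structural theorem $\operatorname{Isom}(G,g)\cong G\rtimes\operatorname{Aut}(G)_g$ for simply connected solvable groups of type $(R)$, combined with the isotropy groups just computed in the preceding theorem. Your additional remarks on verifying the type-$(R)$ hypothesis and on the exhaustiveness of the representatives $M_1,M_2,M_3$ are accurate and make the derivation fully explicit.
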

\textbf{Open question:}\\
The study of the isometry group of left-invariant Riemannian metrics on the following Lie groups is still an open question:
\begin{enumerate}
\item $\operatorname{Sol}_{\mu}^4$, $\widetilde{\operatorname{Isom}_0(\mathbb{R}^2)} \times \mathbb{R}$ (unimodular solvable not of type $(R)$, see \cite{van2017metrics}).
\item $\widetilde{\operatorname{SL}(2,\mathbb{R})} \times \mathbb{R}$, $\operatorname{SU}(2, \mathbb{R}) \times \mathbb{R}$ (unimodular non-solvable, see \cite{van2017metrics}).
\item The simply connected Lie groups associated with the following Lie algebras:\\ $\operatorname{A}_{3, 7}^{\alpha} \oplus \operatorname{A}_1, \operatorname{A}_{4, 6}^{\alpha, \beta}, \operatorname{A}_{4, 11}^{\alpha}$ and $\operatorname{A}_{4, 12}$. They are nonunimodular and not of type $(R)$, as shown in this paper.
\end{enumerate}
The classification of left-invariant Riemannian metrics on these Lie groups is given by \cite{van2017metrics,aitbenhaddou2025classification}. The question is a bit complicated, but one can use the fact that the isotropy subgroup $\operatorname{Isom}(G, g)_e$ can be injected into the intersection $\operatorname{O}(\mathfrak{g}, g) \cap \mathcal{C}(\hat{r})$, where $\operatorname{O}(\mathfrak{g}, g)$ is the orthogonal group of $\mathfrak{g}$ with respect to $g$, and $\mathcal{C}(\hat{r})$ is the subspace of matrices that commute with the Ricci transformation $\hat{r}$ of $(G, g)$. For more details, see the strategy used in the following paper \cite{aitbenhaddou2024isometry}. The signature of the Ricci operator of left-invariant Riemannian metrics on four-dimensional Lie groups is studied in \cite{kremlev2009signature,kremlev2010signature}.

\end{document}